\documentclass[twoside,reqno]{amsart}

\usepackage{amsmath}
\usepackage{amssymb,latexsym,amstext,amsthm}
\usepackage{verbatim} 
\usepackage{eucal} 
\usepackage{color}
\theoremstyle{plain}
\newtheorem{thm}[equation]{Theorem}
\newtheorem{pro}[equation]{Proposition}

\newtheorem{lem}[equation]{Lemma}


\theoremstyle{definition}
\newtheorem{exa}[equation]{Example}

\newtheorem{DEF}[equation]{Definition}
\newtheorem{rem}[equation]{Remark}

\renewcommand{\theequation}{\arabic{section}.\arabic{equation}}

\setlength{\textheight}{7.9 in} \setlength{\textwidth}{5.2 in}

\renewcommand{\theequation}{\thesubsection.\arabic{equation}}

\def\mod{\hbox{mod}}
\def\andd{\quad\hbox{and}\quad}

\def\op{\oplus}

\def\fm{(\cdot,\cdot)}
\def\a{\alpha}

\def\sub{\subseteq}

\def\lam{\lambda}
\def\Lam{\Lambda}

\def\1k{\frac{1}{k}}

\def\la{\langle}
\def\ra{\rangle}

\def\d{\delta}

\def\b{\beta}

\def\qed{\hfill$\Box$}

\def\sg{\sigma}

\def\i{{\mathcal I}}

\def\bq{{\bf q}}

\def\bbbz{{\mathbb Z}}

\def\bbbf{{\mathbb F}}
\def\bbbq{{\mathbb Q}}
\def\bbbe{{\mathbb E}}
\def\bbba{{\mathbb A}}

\def\aa{\mathcal A}

\def\tr{\hbox{tr}}

\def\ep{\epsilon}

\def\supp{\hbox{supp}}

\def\mfm{\mathfrak m}

\def\fsj{\mathfrak{fsj}}
\def\bbbn{\mathbb{N}}
\def\pp{\mathcal{P}}

\def\qedexa{\hfill$\diamondsuit$}

\hyphenation{ar-che-type ar-che-types}

\begin{document}

\setcounter{page}{1} \setcounter{page}{1}

\author{Saeid Azam$\;^{1}$, Yoji Yoshii, Malihe Yousofzadeh$^{2}$}

\title{Jordan tori for a torsion free abelian group}


\address
{Department of Mathematics\\ University of Isfahan\\Isfahan, Iran,
P.O.Box 81745-163 and School of Mathematics, Institute for
Research in Fundamental Sciences (IPM), P.O. Box: 19395-5746,
Tehran, Iran.} \email{azam@sci.ui.ac.ir, saeidazam@yahoo.com.}

\address{Department of Mathematics Education\\ Iwate University\\
Ueda 3-18-33,
Morioka, Iwate\\
Japan 020-8550.}
\email{yoshii@iwate-u.ac.jp.}

\address{Department of Mathematics\\ University of Isfahan\\Isfahan, Iran,
P.O.Box 81745-163 and School of Mathematics, Institute for
Research in Fundamental Sciences (IPM), P.O. Box: 19395-5746,
Tehran, Iran.}\email{ma.yousofzadeh@sci.ui.ac.ir.}

\thanks{$\;^1$This research was in part supported by a reseach grant from IPM and partially carried out in IPM-Isfahan branch. 
The author also would like to thank the Center of Excellence for
Mathematics, University of Isfahan}


\thanks{$\;^2$This research was in part supported by a grant from IPM
(No. 91170415) and partially carried out in IPM-Isfahan branch.}

\subjclass[2010]{17B67, 17C50}

\keywords{Jordan tori, extended affine Lie algebras, invariant affine reflection algebras}

\bigskip

\begin{abstract}
We classify Jordan $G$-tori, where $G$ is any torsion-free abelian group.
Using the Zelmanov prime structure theorem,
such a class divides into three types,
namely, {the Hermitian type,  the Clifford type and the Albert type.}
We concretely describe Jordan $G$-tori of each type.
\end{abstract}

\maketitle


\setcounter{section}{-1}

\section{\bf Introduction}\label{introduction}\setcounter{equation}{0}
\markboth{S. Azam, Y. Yoshii, M. Yousofzadeh}{Jordan tori}
\begin{tiny}
•
\end{tiny}
It is a well-known fact that the concept of a ``$\bbbz^n$-torus'', is of great
importance in the context of classification of extended affine Lie
algebras. This concept was originally defined by Y. Yoshii in
\cite{Yo2}. With the appearance
of more general extensions of affine Kac-Moody Lie algebras such as,
locally extended affine Lie algebras and invariant affine {reflection algebras},
one naturally extends the concept of a $\bbbz^n$-torus to a $G$-torus
for
an abelian group $G$, {where for the algebras under consideration, $G$ is almost always torsion free.}
In this work we classify, in a descriptive manner, Jordan $G$-tori,
where $G$ is a torsion free abelian group.

{First we discuss associative $G$-tori, using the concept of  cocycles.}
Then we show that a Jordan $G$-torus is strongly prime,
and so one can use the Zelmanov prime structure theorem {\cite{MZ}}.
Thus,
such a class divides into three types,
the Hermitian type,  the Clifford type and the Albert type.
We classify each type
using the result of associative $G$-tori
and similar methods in \cite{Yo1}.

{The paper is organized as follows. In Section 1, we provide preliminary concepts, including
direct limits and direct unions, pointed reflection subspaces and (involutorial) associative $G$-tori. In Section 2, using a direct union approach, we show that a Jordan $G$-tori $J$ of Hermitian type has one of involution, plus or extension types (see Definition \ref{nolable}) and that $J$ is a direct union of Jordan tori of Hermitian type, where $J$ and its direct union components have the same involution, plus or extension type, see Theorem \ref{92-1}.
In Section 3, we show that a Jordan $G$-torus
$J$ of Clifford type with
support $S$ and central grading group $\Gamma$, is graded isomorphic to a Clifford $G$-torus $J(S,\Gamma,\{a_\ep\}_{\ep\in I})$, introduced explicitly in Example
\ref{clifford2}, for some nonempty index set $I$ and choices of $a_\ep\in\bbbf^\times$, $\ep\in I$, see Theorem \ref{clif}.
In Section 4, the final section, we first fully characterize associative $G$-tori of central degree 3. Then for two subgroups $\Delta$ and $\Gamma$
of $G$ satisfying $3G\subsetneq \Gamma\sub\Delta\sub G,$ $\dim_{\bbbz_3}(G/\Gamma)=3,$ and $\dim(\Delta/\Gamma)=2,$
we associate to the triple $(G,\Delta,\Gamma)$, a Jordan algebra $\bbba_t$ which turns out to be a Jordan $G$-torus of Albert type, called
an {\it Albert $G$-torus} associated to the triple $(G,\Delta,\Gamma)$, see Example \ref{yoshii-6.8-2}. Then we proceed with showing that
given a Jordan $G$-torus $J$ of Albert type with central grading
group $\Gamma$, there exists a subgroup $\Delta$ of $G$ such that the groups $G$, $\Delta$ and $\Gamma$ satisfy the above interactions and that
 $J$ is graded isomorphic to the Albert $G$-torus $\bbba_t$, constructed from the triple $(G,\Delta,\Gamma)$, see Theorem \ref{yoshii-thm-6.16}.}

\section{\bf Preliminaries}\label{preliminaries}\setcounter{equation}{0}
Throughout this work, $\bbbf$ is a filed {of characteristic zero} and $G$ is an abelian group. All algebras assumed over $\bbbf$ and are unital,
{unless otherwise mentioned.}
For a subset $X$ of an abelian group, by $\la X\ra$, we mean the subgroup generated by $X$.
In a graded algebra we speak of invertible (homogenous) elements, whenever this notion is defined.
The support of a $G$-graded algebra $T$, denoted $\supp(T)$, is by definition the set
of those elements of $G$ for which the corresponding homogenous space is nonzero.
For a set $X$, we denote by ${\mathcal M}_X$ the class of all finite
subsets of $X$. For an associative algebra $\aa$, we denote the corresponding plus algebra {by} $\aa^+$;
namely, $\aa^+$ has $\aa$ as its ground vector space, with Jordan product $a\circ b:=\frac{1}{2}(ab+ba)$.
If $\aa$ is equipped with an involution $\theta$ {(a period $2$ antiautomorphism)}, then $H(\aa,\theta):=\{a\in\aa\mid \theta(a)=a\}$ is a subalgebra of $\aa^+$.
{The field of rational numbers will be denoted by $\bbbq$.} {To indicate that a proof is finished, we
put the symbol $\Box$, and to indicate that an example is concluded, we put the symbol $\diamondsuit$. We refer the reader to {\cite{ZSSS}} for some terminologies
on nonassociative algebras used in the sequel, such as prime, strongly prime, degree, Jordan domain, etc.}

\subsection{A brief review of direct limits and direct unions} A set $I$ together with a partially ordering $\preceq,$ referred  to  $(I,\preceq),$ is called  a {\it directed set} if for each two elements $i,j\in I,$ there is $t\in I$ with $i\preceq t$ and $j\preceq t.$  Suppose that $\mathcal{C}$ is a category and $(I,\preceq)$  is a directed set. A family  $\{C_i\mid i\in I\}$ of objects of $\mathcal{C}$ together with a family $\{f_{i,j}\mid i,j\in I;\; i\preceq j\}$ of morphisms $f_{i,j}$ of $C_i$ to $C_j$ ($i,j\in I,$ $i\preceq j$)  is called a {\it direct system} in $\mathcal{C}$ if  for every pair
$(i, j)$ with $i \preceq j,$ $ f_{ii} = 1_{C_i}$ and $f_{k,i} = f_{k,j} \circ  f_{j,i}$ for $i\preceq j \preceq k.$ A {\it direct limit} of the
direct system $(\{C_i\}_{i\in I},\{f_{i,j}\}_{i\preceq j})$ is an object $C$ together with  morphisms
$\varphi_i : C_i\longrightarrow C$ $(i\in I)$ satisfying the following two conditions:
\begin{itemize}
\item $\varphi_i = \varphi_j\circ f_{i,j}$  for  $i,j\in I$ with  $i\preceq j,$
\item  for any other subject $D$ and morphisms $\psi_i$ ($i\in I$) from $C_i$ to $D$  with  $\psi_i = \psi_j\circ f_{i,j}$ for  $i,j\in I$ with  $i\preceq j,$   there exists a unique morphism $\psi $ from $C$ to $D$ such that $\psi\circ \varphi_i=\psi_i$ for $i\in I.$
\end{itemize}
If a direct limit of  a direct system $(\{C_i\}_{i\in I},\{f_{i,j}\}_{i\preceq j})$ in a category $\mathcal{C}$ exists, it is unique up to equivalence, so we refer to  as  {\it the direct limit} and denote it by $\underrightarrow{lim}C_i.$ Suppose that $C$ is the direct limit of a direct system  $(\{C_i\}_{i\in I},\{f_{i,j}\}_{i\preceq j})$ in a  concrete category $\mathcal{C}$ such that each $C_i$ is a subset of $C$ and for $i,j\in I$ with $i\preceq j,$ $f_{i,j}$ is the inclusion map, then we say $C$ is the {\it direct union} of $(\{C_i\}_{i\in I},\{f_{i,j}\}_{i\preceq j})$ if $C=\cup_{i\in I}C_i.$

\subsection{\bf Pointed reflection
subspaces}\label{pointed}\setcounter{equation}{0}

In this subsection we recall the notion of a reflection subspace and
record, in terms of a direct union point of view, certain properties
of reflection subspaces which will be needed in the sequel.

\begin{DEF}\label{b1}
{\em A {\it symmetric reflection subspace} of an additive abelian
group $G$ is a subset $S$ of $G$ satisfying $\la S\ra= G$ and
$S-2S\sub S$. A symmetric reflection subspace is called a {\it
pointed reflection subspace} (PRS for short) if $0\in S$. For details
on symmetric reflection subspaces, we refer the interested reader to
{\cite{Lo} and \cite{AYY}}.}\end{DEF}

If the group $G$ is free abelian of finite rank, a symmetric
reflection subspace in $G$ is also called a {\it translated semilattice} in $G$.
In this case a pointed reflection subspace is called a {\it
semilattice}. A non-trivial interesting feature of
semilattices is that any semilattice in $G$ contains a $\bbbz$-basis
of $G$ {(see \cite[Proposition, II.1.11]{AABGP})}.

The following lemma, {whose proof is straightforward}, gives a characterization of a PRS in terms of its
{finitely generated} pointed reflection subspaces.

\begin{lem}\label{b2}
(i) Let $S$ be a PRS in $G$. Then the {following} hold:

(a) For $T\sub S$, $S_T:=S\cap\la T\ra$ is a PRS in $\la T\ra$. In
particular, if $G$ is torsion free and $T$ is finite then $S_T$ is a
semilattice in $\la T\ra$.

(b) $S$ is the direct union of $\{S_T\}_{T\in{\mathcal M}_S}$, where ${\mathcal M}_S$ is directed
under inclusion.

(ii) Let $\mathcal S$ be a family of subsets of $G$ such that via the
inclusion $\mathcal S$ is a directed set, and that each element of
$\mathcal S$ is a PRS in its $\bbbz$-span in $G$. If
$G=\cup_{S\in{\mathcal S}}\la S\ra$, then the direct union of
$\{S\}_{S\in{\mathcal S}}$ is a PRS in $G$.
\end{lem}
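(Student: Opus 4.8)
The plan is to verify the two defining conditions of a PRS for $S$ (when $G$ is torsion free we also want the finite‑rank refinement, but the general statements (i)(a)(b) and (ii) only need the axioms $\la\cdot\ra$ being everything and closure under $x-2y$), handling each part in turn by reducing everything to the single‑set case and then patching along the directed system.

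\medskip

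\textbf{Part (i).}
\emph{(a)} Fix $T\sub S$ and set $H:=\la T\ra$. First $S_T=S\cap H$ contains $0$ since $0\in S$ and $0\in H$. Next I check $\la S_T\ra=H$: clearly $\la S_T\ra\sub H$ since $S_T\sub H$; conversely $T\sub S\cap H=S_T$ (each $t\in T$ lies in $S$ because $T\sub S$, and in $H$ trivially), so $H=\la T\ra\sub\la S_T\ra$. Finally I check $S_T-2S_T\sub S_T$: if $x,y\in S\cap H$ then $x-2y\in S$ by the reflection‑subspace property of $S$, and $x-2y\in H$ since $H$ is a subgroup; hence $x-2y\in S\cap H=S_T$. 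So $S_T$ is a PRS in $H$. For the ``in particular'' clause: if $G$ is torsion free and $T$ finite, then $H=\la T\ra$ is a finitely generated torsion‑free abelian group, hence free of finite rank, and a PRS in a free abelian group of finite rank is by definition (stated right after Definition \ref{b1}) a semilattice; thus $S_T$ is a semilattice in $H=\la T\ra$.

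\emph{(b)} I must exhibit $S$ as the direct union of $\{S_T\}_{T\in\mm_S}$ over the directed set $(\mm_S,\sub)$. That $\mm_S$ is directed under inclusion is immediate: for finite $T_1,T_2\sub S$ their union $T_1\cup T_2$ is again a finite subset of $S$ and contains both. For $T_1\sub T_2$ in $\mm_S$ one has $\la T_1\ra\sub\la T_2\ra$, hence $S_{T_1}=S\cap\la T_1\ra\sub S\cap\la T_2\ra=S_{T_2}$, so the inclusion maps make $\{S_T\}$ into a direct system (a subsystem inside the set $S$, in the concrete category of sets or of pointed sets). To see it is a \emph{direct union} in the sense defined in \S1.1, I only need $S=\bigcup_{T\in\mm_S}S_T$: the inclusion $\supseteq$ is clear since each $S_T\sub S$; for $\subseteq$, given $s\in S$ take $T:=\{s\}\in\mm_S$, so that $s\in\la T\ra$ and $s\in S$, i.e.\ $s\in S_{\{s\}}$. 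Hence $S=\bigcup_T S_T$, which is exactly the defining condition for the direct union, and the universal property identifying this union with $\underrightarrow{\lim}\,S_T$ is the standard one for unions of an upward‑directed family of subsets.

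\medskip

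\textbf{Part (ii).}
Assume $\mathcal S$ is a directed (by inclusion) family of subsets of $G$, each a PRS in its own $\bbbz$‑span, with $G=\bigcup_{S\in\mathcal S}\la S\ra$; I want the direct union $D:=\bigcup_{S\in\mathcal S}S$ to be a PRS in $G$. First, $0\in D$: pick any $S\in\mathcal S$ (nonempty since its span must cover pieces of $G$; if $\mathcal S$ were empty then $G=0$ and the statement is vacuous), and $0\in S$ because $S$ is a PRS in $\la S\ra$. Next, $\la D\ra=G$: on one hand $\la D\ra\sub G$; on the other, for each $S\in\mathcal S$ we have $\la S\ra\sub\la D\ra$ since $S\sub D$, hence $G=\bigcup_S\la S\ra\sub\la D\ra$. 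Finally, closure under $x-2y$: given $x,y\in D$, directedness of $\mathcal S$ gives a single $S\in\mathcal S$ with $x,y\in S$; since $S$ is a PRS (in particular a reflection subspace) in $\la S\ra$, $x-2y\in S\sub D$. Thus $D$ satisfies $0\in D$, $\la D\ra=G$, and $D-2D\sub D$, i.e.\ $D$ is a PRS in $G$; and $D$ is the direct union of $\{S\}_{S\in\mathcal S}$ by construction, with the inclusion maps as transition morphisms.

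\medskip

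\textbf{Main obstacle.}
There is essentially no technical obstacle: every step is a one‑line verification from the definitions. The only point requiring a moment's care is the bookkeeping in (i)(b) — making sure that the family $\{S_T\}_{T\in\mm_S}$, together with the inclusion maps, genuinely forms a direct system satisfying $f_{ii}=1$ and the cocycle compatibility $f_{k,i}=f_{k,j}\circ f_{j,i}$ (automatic for inclusions) and that ``$S=\bigcup S_T$'' is precisely the extra condition that upgrades the direct limit to a direct union in the sense of \S1.1. Likewise in (ii) the single nontrivial use of hypotheses is the appeal to directedness to put any two given elements $x,y$ into a common member $S$ of the family before applying that member's reflection‑subspace axiom; without directedness the union would not in general be closed under $x-2y$. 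This is exactly why the lemma is flagged as having a straightforward proof.
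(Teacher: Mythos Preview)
Your proof is correct and is precisely the kind of routine verification the paper has in mind: the paper does not give a proof of this lemma at all, stating only that it ``is straightforward.'' Every step you write (checking $0\in S_T$, $\la S_T\ra=\la T\ra$, and $S_T-2S_T\sub S_T$ in (i)(a); directedness and $S=\bigcup_T S_T$ in (i)(b); and using directedness to place $x,y$ into a common member of $\mathcal S$ in (ii)) is exactly what ``straightforward'' means here, so there is nothing to compare.
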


\subsection{\bf $G$-tori}\label{tori}\setcounter{equation}{0}

In this subsection we study $G$-tori, where $G$ is assumed to be a
{\it torsion free} abelian group. Since {$G$ can be naturally imbedded in} $G\otimes_\bbbz
\bbbq$, we can make sense of $\sg/n$ for $\sg\in G$ and
$n\in\bbbz\setminus\{0\}$. {{We should recall that since $G$ is torsion free it is an ordered group in
the sense of \cite[page 94]{La}.}

\begin{DEF}\label{day-2}{\em
(\cite[Definition 3.1]{Yo1}) A $G$-graded algebra $J=\sum_{\sg\in
G}J^\sg$ satisfying conditions
\begin{itemize}
\item[(T1)] $G=\la \sg\in G\mid J^\sg\not=0\ra$,
\item[(T2)] all nonzero homogeneous elements of $J$ are
    invertible,
\item[(T3)] $\dim_\bbbf(J^\sg)\leq 1$ for all $\sg\in G$,
\end{itemize}
is called a {\it $G$-torus}. It is called of {{\it strong type}, if $J$ is {\it strongly graded}, namely
$J^\sg    J^\tau=J^{\sg+\tau}$ for all $\sg,\tau\in G$.
The $G$-torus $J$ is called an {\it associative} or a {\it Jordan
$G$-torus}, if $J$ is associative or Jordan, respectively.}}
\end{DEF}

\begin{lem}\label{direct-limit} Suppose that $G$ is an abelian group and $\Gamma$ is {a nonempty index set.}
Suppose that $\{G_\gamma\mid \gamma\in\Gamma\}$ is a class of subgroups of $G$ such  that $G=\cup_{\gamma\in\Gamma}G_\gamma.$
{Consider $\Gamma$ as a directed set whose ordering ``$\preccurlyeq$" is defined by}
$$\gamma\preccurlyeq\eta\hbox{ if $G_\gamma$ is a subgroup of $G_\eta$ } (\gamma,\eta\in\Gamma).  $$ If $(\{\aa_\gamma\},\{\varphi_{\gamma,\eta}\})$ is a direct system of associative algebras and algebra homomorphisms with direct limit $(\aa,\{\varphi_\gamma\})$ such that
\begin{itemize}
\item each $\aa_\gamma$ is equipped with a $G$-grading $\aa_\gamma=\op_{g\in G}(\aa_\gamma)^g$ with $\supp(\aa_\gamma)=G_\gamma$ and $\dim((\aa_\gamma)^g)\leq 1,$ for all $g\in G.$

\item each $\varphi_{\gamma,\eta}$ is a $G$-graded homomorphism,

\item each $\varphi_\gamma$ is monomorphism,
\end{itemize}
then $\aa$ as an algebra  is equipped with a  $G$-grading $\aa=\op_{g\in G}\aa^g$  with $\supp(\aa)=G$ and $\dim(\aa^g)=1$ for all $g\in G.$ Moreover if each $\aa_\gamma$ is an associative $G_\gamma$-torus, then  $\aa$ is an associative $G$-torus.
\end{lem}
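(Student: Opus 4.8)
The plan is to transport all structure along the limit maps $\varphi_\gamma$, using that each $\varphi_\gamma$ is an injective $G$-graded homomorphism. First I would define, for $g\in G$, the subspace $\aa^g:=\sum_{\gamma\in\Gamma}\varphi_\gamma\big((\aa_\gamma)^g\big)\subseteq\aa$. The compatibility condition $\varphi_\gamma=\varphi_\eta\circ\varphi_{\gamma,\eta}$ together with the fact that each $\varphi_{\gamma,\eta}$ is $G$-graded shows that $\varphi_\gamma\big((\aa_\gamma)^g\big)\subseteq\varphi_\eta\big((\aa_\eta)^g\big)$ whenever $\gamma\preccurlyeq\eta$; hence, since $\Gamma$ is directed, the family $\{\varphi_\gamma((\aa_\gamma)^g)\}_{\gamma\in\Gamma}$ is a directed family of subspaces of $\aa$ and $\aa^g$ is simply their union. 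Because $\aa=\underrightarrow{lim}\,\aa_\gamma$ and $\Gamma$ is directed, every element of $\aa$ lies in the image of some $\varphi_\gamma$ (this is the standard description of a directed colimit of algebras); writing such an element in the $G$-grading of $\aa_\gamma$ and applying $\varphi_\gamma$ shows $\aa=\sum_{g\in G}\aa^g$. For the directness of the sum, suppose $\sum_{g}x_g=0$ with $x_g\in\aa^g$ and only finitely many nonzero; choose (by directedness of $\Gamma$) a single index $\gamma$ with all the $x_g\in\varphi_\gamma((\aa_\gamma)^g)$, write $x_g=\varphi_\gamma(y_g)$ with $y_g\in(\aa_\gamma)^g$, so $\varphi_\gamma(\sum_g y_g)=0$; injectivity of $\varphi_\gamma$ forces $\sum_g y_g=0$ in $\aa_\gamma$, and the directness of the grading of $\aa_\gamma$ gives $y_g=0$, hence $x_g=0$. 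Thus $\aa=\op_{g\in G}\aa^g$. That this is an algebra grading, i.e. $\aa^g\aa^h\subseteq\aa^{g+h}$, follows from the same single-index trick: a product of a homogeneous element of degree $g$ and one of degree $h$ can be computed inside a suitable $\aa_\gamma$, where the inclusion $(\aa_\gamma)^g(\aa_\gamma)^h\subseteq(\aa_\gamma)^{g+h}$ holds, and then pushed forward by the algebra homomorphism $\varphi_\gamma$.

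Next I would pin down the support and the dimensions. For the dimension bound, fix $g\in G$ and two elements of $\aa^g$; by directedness pick a common $\gamma$ with both in $\varphi_\gamma((\aa_\gamma)^g)$, and since $\dim((\aa_\gamma)^g)\le 1$ and $\varphi_\gamma$ is linear, the two elements are proportional; hence $\dim(\aa^g)\le 1$. For $\supp(\aa)=G$: given $g\in G$, the hypothesis $G=\cup_{\gamma}G_\gamma$ yields some $\gamma$ with $g\in G_\gamma=\supp(\aa_\gamma)$, so $(\aa_\gamma)^g\ne 0$, and since $\varphi_\gamma$ is injective, $\aa^g\supseteq\varphi_\gamma((\aa_\gamma)^g)\ne 0$. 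Combining the two, $\dim(\aa^g)=1$ for every $g\in G$, which also gives $\supp(\aa)=G$.

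Finally, for the ``moreover'' part: assume each $\aa_\gamma$ is an associative $G_\gamma$-torus. Associativity of $\aa$ is automatic since it is a direct limit (in the category of associative algebras) of associative algebras; alternatively, any triple product can be evaluated in a single $\aa_\gamma$. Axiom (T1), $G=\la\supp(\aa)\ra$, is immediate since $\supp(\aa)=G$. Axiom (T3), $\dim(\aa^g)\le 1$, was just established. For axiom (T2), invertibility of nonzero homogeneous elements: I would note that $\aa$ is unital, with identity $1=\varphi_\gamma(1_{\aa_\gamma})$ for any $\gamma$ (all these images agree because $\varphi_{\gamma,\eta}(1_{\aa_\gamma})=1_{\aa_\eta}$ and $\varphi_\gamma=\varphi_\eta\circ\varphi_{\gamma,\eta}$), and this identity lies in $\aa^0$. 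Given a nonzero $x\in\aa^g$, write $x=\varphi_\gamma(y)$ with $0\ne y\in(\aa_\gamma)^g$; since $\aa_\gamma$ is a $G_\gamma$-torus, $y$ has a homogeneous inverse $y^{-1}\in(\aa_\gamma)^{-g}$, and then $\varphi_\gamma(y^{-1})$ is a homogeneous inverse of $x$ in $\aa^{-g}$ because $\varphi_\gamma$ is a unital algebra homomorphism. Hence $\aa$ is an associative $G$-torus.

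The routine parts are the single-index reductions, which all use directedness of $\Gamma$ and are the same each time; the one point that needs the injectivity hypothesis in an essential way is the directness of the grading sum (and the nonvanishing of $\aa^g$), and the one point needing the unitality conventions is the identification of a common identity element compatible with all $\varphi_\gamma$. I do not expect a genuine obstacle here; the main thing to be careful about is that ``every element of $\aa$ lies in some $\varphi_\gamma(\aa_\gamma)$'' and that such a representative can be chosen $G$-homogeneous-component-wise, both of which follow from the explicit construction of the direct limit as a quotient of $\bigsqcup_\gamma\aa_\gamma$ and the directedness of $\Gamma$.
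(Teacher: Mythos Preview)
Your argument is correct and complete; the paper itself gives no proof beyond ``It is easy to see,'' so your detailed verification is exactly the sort of routine unpacking the authors are leaving to the reader, and there is nothing to compare.
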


\begin{proof}
It is easy to see.
\end{proof}

\begin{lem}\label{b3} Let $T$ be a Jordan or an associative $G$-torus. Let
$\mathcal M$ be the set of all finite subsets of $\supp(T)$
containing a fixed finite subset ${\mathfrak m}_0$ of $\supp(T)$, and
consider $\mathcal M$ as a directed set via inclusion. For
$\mathfrak{m}\in\mathcal M$, let $G_\mfm:=\la \mfm\ra$ and
$T_{\mathfrak m}:=\sum_{\sg\in G_\mfm}T^\sg$. Then we have the
{following}:

(i) $G$ is the direct union of $\{G_\mfm\}_{\mathfrak{m}\in\mathcal
M}$,

(ii) for $\mathfrak{m}\in\mathcal M$, $T_{\mathfrak{m}}$ is a
$G_\mfm$-torus, and $T$ is the direct union of $\{T_{\mathfrak
m}\}_{\mathfrak{m}\in\mathcal M}$,

(iii) $\supp(T)$ is the direct union of $\{\supp(T_{\mathfrak
m})\}_{{\mathfrak m}\in\mathcal M}$,

(iv) $\supp(T)$ is a PRS in $G$,

 (v) $T$ is domain, in particular, {it} has no nilpotents, and {it is
 strongly prime if $T$ is Jordan,}

 (vi) a nonzero element of $T$ is invertible if and only if is
 homogeneous,

(vii) if $0\not=x\in T$ and $x^m\in T^\sg$ for some $\sg\in G$,
$m\in\bbbz$, then $\sg\in mG$ and $x\in T^{\frac{1}{m}\sg}.$

\end{lem}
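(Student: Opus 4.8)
The plan is to establish the seven items in the order listed, since each one tends to feed the next, and to exploit the ``direct union'' formalism set up in Lemma \ref{b2} and Lemma \ref{direct-limit} rather than argue from scratch.

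\medskip

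\emph{Parts (i)--(iii): the direct union picture.} First I would observe that $\mathcal M$ is indeed directed under inclusion, because the union of two finite subsets of $\supp(T)$ containing $\mathfrak m_0$ is again such a set. For (i), note $G=\la\supp(T)\ra$ by (T1), and every element of $G$ is a $\bbbz$-combination of finitely many elements of $\supp(T)$, hence lies in some $G_\mfm$; monotonicity $\mfm\sub\mfm'\Rightarrow G_\mfm\sub G_{\mfm'}$ is clear, so $G=\cup_\mfm G_\mfm$ is a direct union. For (ii), $T_\mfm=\sum_{\sg\in G_\mfm}T^\sg$ is a graded subspace; it is a subalgebra because the product (Jordan or associative) of homogeneous elements of degrees in $G_\mfm$ lands in a homogeneous space of degree in $G_\mfm$; it inherits (T2), (T3) from $T$, and it satisfies (T1) relative to $G_\mfm$ because $\supp(T_\mfm)=\supp(T)\cap G_\mfm\supseteq\mfm$ generates $G_\mfm$. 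The direct union statements for $T$ and for $\supp(T)$ are then immediate from $T=\cup_\mfm T_\mfm$ and $\supp(T)=\cup_\mfm\supp(T_\mfm)$.

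\medskip

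\emph{Part (iv).} This is where I would invoke Lemma \ref{b2}(ii): each $\supp(T_\mfm)$ is a subset of $G_\mfm$ with $\la\supp(T_\mfm)\ra=G_\mfm$, and since $G$ is torsion free and $G_\mfm$ is finitely generated (free abelian of finite rank), I must check $\supp(T_\mfm)$ is a PRS in $G_\mfm$, i.e. $0\in\supp(T_\mfm)$ and $\supp(T_\mfm)-2\supp(T_\mfm)\sub\supp(T_\mfm)$. That $0\in\supp(T)$ holds because $T$ is unital. The reflection-space closure is the substantive computational point: for homogeneous invertible $x\in T^\sg$, $y\in T^\tau$, one shows $x^{-1}\in T^{-\sg}$ (degree of the inverse of an invertible homogeneous element), and then in the Jordan case the operator $U_x y$ (or in the associative case $x y^{-1} x$, resp. $y^{-1}$ conjugated) has degree $2\sg-\tau$ and is nonzero by (T2) and the fact that $T$ has no zero divisors — so $2\sg-\tau\in\supp(T)$, whence $\tau-2\sg\in\supp(T)$ as well after relabeling. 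Because $\mathcal M$ is cofinal in all finite subsets of $\supp(T)$ containing $\mathfrak m_0$, the family $\{\supp(T_\mfm)\}$ is directed with union $\supp(T)$ and $G=\cup G_\mfm$, so Lemma \ref{b2}(ii) yields that $\supp(T)$ is a PRS in $G$.

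\medskip

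\emph{Parts (v)--(vii): the domain and invertibility properties.} For (v), by (iii) it suffices to treat the finitely generated case $T_\mfm$, which is an honest $\bbbz^n$-torus; I would argue that if $0\ne x=\sum_{i=1}^k x_{\sg_i}$ (distinct degrees, all $x_{\sg_i}\ne0$) and $0\ne y=\sum_{j=1}^l y_{\tau_j}$ then, choosing the ordering on the torsion-free group $G$ (mentioned on page for $G$-tori), the lowest-degree component of $xy$, resp. of the relevant Jordan products, is $x_{\sg_{\min}}y_{\tau_{\min}}$ up to a nonzero scalar, which is nonzero since homogeneous elements are invertible; hence $T$ has no zero divisors, so no nilpotents, and in the Jordan case this says $T$ is a Jordan domain, which is strongly prime (cf.\ \cite{ZSSS}). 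For (vi): a homogeneous nonzero element is invertible by (T2); conversely if $x$ is invertible with inverse $x^{-1}$, write both in homogeneous components and use the ordering argument on $xx^{-1}=1$ — comparing top and bottom degrees forces $x$ (and $x^{-1}$) to be homogeneous, otherwise $1$ would have support strictly larger than $\{0\}$. For (vii): $x^m\in T^\sg\setminus\{0\}$ shows $x^m$ is invertible, hence $x$ is invertible, hence by (vi) $x$ is homogeneous, say $x\in T^\rho$; then $x^m\in T^{m\rho}$ forces $m\rho=\sg$, so $\sg\in mG$ and $\rho=\frac1m\sg$, using that $G$ (being torsion free, inside $G\otimes_\bbbz\bbbq$) has unique division. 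The main obstacle is the reflection-space closure in (iv) together with cleanly handling the Jordan products there; everything else reduces, via the direct union, to the finitely generated case where the ordering on $G$ makes the leading-term arguments routine.
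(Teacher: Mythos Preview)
Your argument follows essentially the same route as the paper: parts (i)--(iii) are immediate, part (iv) reduces to the finitely generated case via Lemma~\ref{b2}(ii), and parts (v)--(vii) are handled by a direct-union reduction to the finite-rank situation (the paper simply cites \cite[Lemmas 3.5, 3.6, 2.4 and Corollary 3.7]{Yo1} where you sketch those arguments directly). One small slip to fix: in your reflection-space computation for (iv), $U_x y$ has degree $2\sg+\tau$, not $2\sg-\tau$; you want $U_x(y^{-1})$ (parallel to your associative $xy^{-1}x$), which is nonzero because $U_x$ is invertible whenever $x$ is --- so no appeal to the domain property is needed there.
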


\proof The proof of parts (i)-(iii) is immediate. By {\cite[Lemma 3.5]{Yo1}},
for each ${\mathfrak m}\in\mathcal M$, $\supp(T_{\mathfrak m})$ is a
PRS in $G_\mfm$. Moreover, by (T1),
$G=\la\supp(T)\ra=\cup_{{\mathfrak m}\in\mathcal M}G_\mfm$. So by
part (iii) and Lemma \ref{b2}(ii), $\supp(S)$ is a PRS in $G$, proving (iv).
The proof of parts (v)-(vii) follows {from \cite[Lemma 3.6, Corollary
3.7 and Lemma 2.4]{Yo1}} and a direct union argument. \qed

We know from Lemma \ref{b3} that the center $Z(T)$ of $T$ is an
integral domain commutative associative homogeneous subalgebra of
$T$. In particular, $\Gamma:=\supp(Z(T))$ is a subgroup of $G$ and
$Z(T)$ is $\Gamma$-graded. It follows that $Z(T)$ is isomorphic to
{a commutative twisted group algebra}. The
group $\Gamma$ is called the {\it central grading group} of $T$. Let
$\bar{Z}$ be the field of fractions of $Z$, and consider
$\bar{T}=\bar{Z}\otimes_{Z}T$. If $T$ is an associative (Jordan)
algebra, then by Lemma \ref{b3}(v) and \cite[2.6]{Yo1}, $\bar{T}$ is
a domain associative (Jordan) algebra over $\bar{Z}$.

{The following lemma is proved in \cite[Lemma 3.9]{Yo1}, where $G$ is
assumed to be a free abelian group of finite rank. However, one can
check, using some straightforward modifications, that the same proof
works when $G$ is a torsion free abelian group.}

\begin{lem}\label{ipm6}
Let $G$ be a torsion free abelian group and $T=\bigoplus_{\a\in
G}T_\a$ be a Jordan or an associative torus. Let $Z=Z(T)$ be the
center of $T$ with the central grading group $\Gamma$. Let
$^{\overline{\;}}:G\longrightarrow G/\Gamma$ be the canonical map.
For $\a\in G$, let $T_{\bar \a}:=ZT_\a$ and
$\bar{T}_{\bar\a}:=\bar{Z}\otimes_Z ZT_\a$. Then

(i) $ZT_\a=ZT_\b$ for all $\a,\b\in G$ with $\a\equiv\b$ $\mod\;
\Gamma$,

(ii) $T=\bigoplus_{\bar\a\in G/\Gamma}T_{\bar\a}$ is a free
$Z$-module and a $G/\Gamma$-graded algebra over $Z$ with {$\hbox{rank}
T_{\bar\a}\leq 1$} for all $\bar\a\in G/\Gamma$,

(iii) $\bar{T}=\bigoplus_{\bar\a\in G/\Gamma}\bar{T}_{\bar\a}$ is a
$G/\Gamma$-graded torus over $\bar Z$ with $\dim_{\bar
Z}\bar{T}=|(\supp T)/\Gamma|$,

(iv) the quotient group $G/\Gamma$ {cannot} be a nontrivial cyclic
group.
\end{lem}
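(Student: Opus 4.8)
The plan is to prove Lemma \ref{ipm6} by first reducing everything to statements about the "pure quotient torus" $\bar T = \bar Z \otimes_Z T$, and then importing the finite-rank result of \cite[Lemma 3.9]{Yo1} via the direct union machinery established in Lemma \ref{b3}. Concretely, I would proceed as follows.

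First, for part (i), note that if $\a \equiv \b \pmod \Gamma$, write $\b = \a + \gamma$ with $\gamma \in \Gamma = \supp(Z)$. Then $T_\gamma \subseteq Z$ is a one-dimensional space spanned by an invertible element $z$ (invertibility from (T2)), and since $z T_\a \subseteq T_\b$ with both sides at most one-dimensional and $z$ invertible, we get $T_\b = z T_\a \subseteq Z T_\a$, hence $Z T_\b \subseteq Z T_\a$; symmetry gives equality. This is the only genuinely "new" computation and it is short. Part (ii) is then formal: the decomposition $T = \bigoplus_{\bar\a \in G/\Gamma} Z T_\a$ is well-defined by (i), it is a $G/\Gamma$-grading because $Z T_\a \cdot Z T_\b \subseteq Z T_{\a+\b}$, and freeness as a $Z$-module together with $\operatorname{rank} T_{\bar\a} \le 1$ follows because each $Z T_\a$ is generated over the domain $Z$ by a single element $T_\a$ (a rank-one or rank-zero free module since $Z$ is a domain and homogeneous elements are non-zero-divisors by Lemma \ref{b3}(v)). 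Part (iii) is obtained by tensoring (ii) with the fraction field $\bar Z$: base change preserves the grading and turns each rank-$\le 1$ free $Z$-module into a $\le 1$-dimensional $\bar Z$-space, which is exactly $1$-dimensional precisely when $\bar\a \in (\supp T)/\Gamma$, giving $\dim_{\bar Z}\bar T = |(\supp T)/\Gamma|$; one also checks (T1)–(T3) for $\bar T$ over $\bar Z$ directly, so $\bar T$ is genuinely a $G/\Gamma$-torus.

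The substantive part is (iv), that $G/\Gamma$ cannot be a nontrivial cyclic group. Here I would argue by contradiction and reduce to the finite-rank case. Suppose $G/\Gamma$ is cyclic and nontrivial; pick $\a \in G$ whose image $\bar\a$ generates $G/\Gamma$, and let $\mfm_0$ be a finite subset of $\supp(T)$ large enough that $\a \in \la \mfm_0\ra$ and that the images of $\mfm_0$ in $G/\Gamma$ still generate a nontrivial group — we may even include in $\mfm_0$ enough elements so that $\la\mfm_0\ra$ surjects onto a nontrivial cyclic quotient of $G/\Gamma$. Now apply Lemma \ref{b3}(ii): $T_{\mfm_0} := \sum_{\sg \in \la\mfm_0\ra} T^\sg$ is a $G_{\mfm_0}$-torus, and $G_{\mfm_0} = \la\mfm_0\ra$ is free abelian of finite rank since $G$ is torsion free. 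Its center is $Z(T_{\mfm_0}) = Z(T) \cap T_{\mfm_0}$ with central grading group $\Gamma_{\mfm_0} = \Gamma \cap G_{\mfm_0}$, so $G_{\mfm_0}/\Gamma_{\mfm_0}$ embeds in $G/\Gamma$; by the choice of $\mfm_0$ this quotient is a nontrivial cyclic group, contradicting \cite[Lemma 3.9(iv)]{Yo1} applied to the finite-rank torus $T_{\mfm_0}$. The one point that needs care — and which I expect to be the main obstacle — is verifying that $Z(T_{\mfm_0})$ really is the restriction of $Z(T)$ and hence that $\Gamma_{\mfm_0} = \Gamma \cap G_{\mfm_0}$; this uses that an element of $T_{\mfm_0}$ central in $T_{\mfm_0}$ need not a priori be central in all of $T$, so one should instead argue directly that if $G/\Gamma$ is nontrivial cyclic then already some finitely generated sub-torus exhibits a nontrivial cyclic quotient of its own center grading group, e.g. by noting $\Gamma$ is itself the direct union of the $\Gamma \cap G_{\mfm}$ and choosing $\mfm_0$ so that $\la \mfm_0 \ra \not\subseteq \Gamma$ while the center of $T_{\mfm_0}$ is exactly $\sum_{\sg \in \Gamma \cap G_{\mfm_0}} T^\sg$, the latter equality following because any homogeneous $z \in T^\sg$ with $\sg \in \Gamma$ lies in $Z(T)$ and hence commutes with (and associates with) everything in $T_{\mfm_0}$, while conversely a homogeneous central element of $T_{\mfm_0}$ of degree $\sg \notin \Gamma$ would, after passing to a still larger $\mfm$, fail to remain central — a finite-support argument closes this. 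Modulo this bookkeeping, (iv) follows immediately from the finite-rank statement, exactly as the remark preceding the lemma promises.
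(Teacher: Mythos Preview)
Your direct arguments for (i)--(iii) are correct and are essentially what the paper has in mind: the paper does not give an independent proof but simply remarks that the argument of \cite[Lemma~3.9]{Yo1} goes through with straightforward modifications for torsion-free $G$, and your computations for (i)--(iii) are just those arguments written out.

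For (iv) you take a genuinely different route. The paper's position is that the original proof in \cite{Yo1} (which argues directly with the $G/\Gamma$-graded structure of $\bar T$ over $\bar Z$) carries over unchanged to the torsion-free setting. You instead reduce to the finite-rank statement via the direct-union machinery of Lemma~\ref{b3}. Both approaches are valid; yours has the virtue of making the dependence on \cite{Yo1} a black-box citation rather than asking the reader to rerun that proof, at the cost of the bookkeeping you flag.

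On that bookkeeping: you correctly identify that $Z(T_{\mfm_0})$ need not equal $Z(T)\cap T_{\mfm_0}$, so $\Gamma_{\mfm_0}$ may properly contain $\Gamma\cap G_{\mfm_0}$. Your proposed fix --- keep enlarging $\mfm_0$ until every non-$\Gamma$ degree fails to be central --- is not obviously terminating, since each enlargement may introduce new degrees outside $\Gamma$. The clean way out is simpler than what you wrote. You only need \emph{one} degree $\sg\in\supp(T)\setminus\Gamma$ (which exists since $\la\supp T\ra=G\neq\Gamma$), together with the finitely many homogeneous elements witnessing that $T^\sg\not\subseteq Z(T)$; for any finite $\mfm$ containing $\sg$ and the degrees of those witnesses, one has $\sg\notin\Gamma_{\mfm}$, so $G_{\mfm}/\Gamma_{\mfm}$ is nontrivial. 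Since $\Gamma\cap G_{\mfm}\subseteq\Gamma_{\mfm}$ always holds, $G_{\mfm}/\Gamma_{\mfm}$ is a quotient of a subgroup of the cyclic group $G/\Gamma$, hence itself nontrivial cyclic, contradicting \cite[Lemma~3.9(iv)]{Yo1}. Alternatively and even more simply: if $G_{\mfm}/\Gamma_{\mfm}$ were trivial for every $\mfm$, then every $T_{\mfm}$ would equal its own center, hence $T=\bigcup_{\mfm}T_{\mfm}$ would be commutative associative and $\Gamma=G$, again a contradiction.
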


\subsection{\bf Associative
$G$-tori}\label{associative}\setcounter{equation}{0} Let $G$ be an
abelian group. Symbols $\sg,\tau,\mu$ always denote elements of $G$.
Let $\aa=\bigoplus_{\sg\in G}\aa^\sg$ be an associative $G$-torus. Since
homogeneous non-zero elements of $\aa$ are invertible, we have
$$\aa^\sg\aa^\tau=\aa^{\sg+\tau},\qquad(\sg,\tau\in\supp(\aa)).
$$
It follows that $\supp(\aa)$ is a subgroup of $G$ {and so by by (T1), $\supp(\aa)=G$.} For $\sg\in G$, we choose
$0\not=x^\sg\in \aa^\sg$. Then $\aa=\bigoplus_{\sg\in G}\bbbf x^\sg$.
Define $\lam:G\times G\longrightarrow \bbbf^\times$ by
\begin{equation}\label{gday1}
x^\sg x^\tau=\lam(\sg,\tau)x^{\sg+\tau},\qquad {(\sg,\tau\in G)}.
\end{equation}
Associativity of $\aa$ implies that $\lam$ is a $2$-cocycle, namely {for $\sg,\tau,\mu\in G$,}
\begin{equation}\label{4ipm}\lam(\sg+\tau,\mu)\lam(\sg,\tau)=\lam(\sg,\tau+\mu)\lam(\tau,\mu).
\end{equation}

Conversely, let $\lam:G\times G\longrightarrow\bbbf^\times$ be a
$2$-cocycle. Consider the abstract vector space $\aa:=\bigoplus_{\sg\in
G}\bbbf x^\sg$ with basis $\{x^\sg\mid\sg\in G\}$. Then the
multiplication on $\aa$ induced from (\ref{gday1}) makes $\aa$
into an associative $G$-torus with $\supp(\aa)=G$. We denote $\aa$
by $(\bbbf^t(G),\lam)$ and call it the {\it associative $G$-torus
determined by the $2$-cocycle $\lam$}. We note that the
associative $G$-torus $(\bbbf^t[G],\lam)$ can be characterized as
the  unital associative algebra  defined by {the set of} generators
$\{x^\sg\mid\sg\in g\}$ and relations \eqref{gday1}. The
associative $G$-torus $(\bbbf^t[G],\lam)$ is called {{\it elementary}} if
$\hbox{img}(\lam)\sub\{1,-1\}$. In the literature,
$\aa=(\bbbf^t[G],\lam)$ is also known as the {\it twisted group
algebra} determined by $\lam$ (see \cite{OP}). We summarize the
above discussion as follows.

\begin{lem}\label{tori-11}
Let $G$ be an abelian group and $\aa$ be an associative algebra.
Then $\aa$ is a $G$-torus if and only if
$\aa\cong_G(\bbbf^t[G],\lam)$ for a $2$-cocycle $\lam$.
\end{lem}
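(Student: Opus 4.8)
The plan is to establish the two implications of Lemma \ref{tori-11} separately, using the cocycle machinery developed just before the statement.

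For the ``only if'' direction, suppose $\aa$ is an associative $G$-torus. The key first observation, already recorded in the text preceding the statement, is that since every nonzero homogeneous element of $\aa$ is invertible, $\supp(\aa)$ is closed under addition and negation, hence is a subgroup of $G$; combined with (T1) this forces $\supp(\aa)=G$, so $\aa^\sg\not=0$ for every $\sg\in G$. Next I would invoke (T3) to get $\dim_\bbbf\aa^\sg=1$ for all $\sg\in G$, and then choose, for each $\sg\in G$, a nonzero vector $x^\sg\in\aa^\sg$ (e.g.\ fixing $x^0=1$ for tidiness). This gives a homogeneous basis $\{x^\sg\mid\sg\in G\}$ of $\aa$. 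Since $x^\sg x^\tau$ is a nonzero element of $\aa^{\sg+\tau}=\bbbf x^{\sg+\tau}$, there is a unique $\lam(\sg,\tau)\in\bbbf^\times$ with $x^\sg x^\tau=\lam(\sg,\tau)x^{\sg+\tau}$, which is precisely \eqref{gday1}. Associativity $(x^\sg x^\tau)x^\mu=x^\sg(x^\tau x^\mu)$ then yields the $2$-cocycle identity \eqref{4ipm} after comparing coefficients of $x^{\sg+\tau+\mu}$. By construction the linear map sending the basis vector $x^\sg$ of $(\bbbf^t[G],\lam)$ to $x^\sg\in\aa$ is a $G$-graded algebra isomorphism, so $\aa\cong_G(\bbbf^t[G],\lam)$.

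For the ``if'' direction, suppose $\aa\cong_G(\bbbf^t[G],\lam)$ for some $2$-cocycle $\lam$. Since being a $G$-torus is preserved by $G$-graded isomorphism, it suffices to check that $(\bbbf^t[G],\lam)$ itself satisfies (T1)--(T3) and is associative. Associativity is exactly the content of the cocycle identity \eqref{4ipm}, verified by checking it on the spanning set $\{x^\sg\}$: both $(x^\sg x^\tau)x^\mu$ and $x^\sg(x^\tau x^\mu)$ equal $\lam(\sg+\tau,\mu)\lam(\sg,\tau)x^{\sg+\tau+\mu}$ after applying \eqref{4ipm}. Condition (T3), and indeed $\dim\aa^\sg=1$, holds by the very definition of $(\bbbf^t[G],\lam)=\bigoplus_{\sg}\bbbf x^\sg$; (T1) holds since $\supp(\aa)=G$. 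For (T2): each $x^\sg$ is invertible because $\lam(\sg,-\sg)\not=0$, so $\bigl(\lam(\sg,-\sg)\bigr)^{-1}x^{-\sg}$ is a right inverse, and similarly a left inverse exists (one checks left and right inverses coincide, or invokes $\lam(-\sg,\sg)\not=0$); hence every nonzero homogeneous element $cx^\sg$ with $c\in\bbbf^\times$ is invertible. This establishes all four properties.

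I do not anticipate a genuine obstacle here: the lemma is essentially a restatement and packaging of the discussion immediately preceding it, and the proof is a routine ``unwind the definitions'' argument. The only mild care needed is the observation that $\supp(\aa)$ is a \emph{subgroup} (so that $\lam$ is defined on all of $G\times G$, not merely on a sub-semigroup), and the bookkeeping that a $G$-graded algebra isomorphism transports each of (T1)--(T3) and associativity back and forth. Accordingly I would expect the authors' proof to be short — likely just a pointer back to equations \eqref{gday1} and \eqref{4ipm} and the paragraph defining $(\bbbf^t[G],\lam)$.
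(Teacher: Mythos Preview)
Your proposal is correct and matches the paper's approach exactly: the authors present the lemma as a summary of the preceding discussion, with no separate proof, relying on precisely the observations you spell out (that $\supp(\aa)=G$, the choice of basis $\{x^\sg\}$, the definition of $\lam$ via \eqref{gday1}, associativity giving \eqref{4ipm}, and the converse construction). Your expectation that the authors' proof would be ``just a pointer back'' is spot on.
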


Let $\aa=(\bbbf^t[G),\lam)$ be an associative $G$-torus. Note that
for $\sg,\tau\in G$, $x^\sg$ and $x^\tau$ commute up to a
''twisting'', namely
\begin{equation}\label{tat1}
{x^\sg x^\tau=\lam_t(\sg,\tau)x^\tau x^\sg,}
\end{equation} where
\begin{equation}\label{quan}
\lam_t(\sg,\tau):=\lam(\sg,\tau)\lam(\tau,\sg)^{-1}.
\end{equation}
We clearly have $\lam_t(\sg,\sg)=1$ and
$\lam_t(\sg,\tau)=\lam_t(\tau,\sg)^{-1}$. Moreover, one can check
that $\lam_t:G\times G\longrightarrow\bbbf^\times$ is a group
bihomomorphism.

\begin{rem}\label{gday+12}
{Suppose in the above discussion that we replace the basis
$\{x^\sg\mid\sg\in G\}$ of $\aa$ by another basis $\{y^\sg\mid\sg\in
G\}$. Then for $\sg\in G$, $y^\sg=d(\sg)x^\sg$ where
$d:G\longrightarrow \bbbf^\times$ is a map. Denote the corresponding
$2$-cocycle as in (\ref{gday1}) by $\hat\lam:G\times
G\longrightarrow\bbbf^\times$. Then we have
$$\hat\lam(\sg,\tau)=d(\sg)d(\tau)d(\sg+\tau)^{-1}\lam(\sg,\tau).$$
Therefore, $\lam$ and $\hat\lam$ are {\it equivalent}, up to a
coboundary. That is the product on $\aa$ is uniquely determined
up to $H^2(G,\bbbf)$ (see \cite[\S 1]{OP}).}
%
\end{rem}

\begin{exa}\label{gday-1}{\em  {\bf (Quantum tori).} Let $\Lam$ be a free
abelian group of rank $|I|$ where $I$ is a nonempty index set with a fixed
total ordering $<$. Let $\aa=(\bbbf^t[\Lam],\lam)$ be a
$\Lam$-torus determined by a $2$-cocycle $\lam$. We fix a basis
$\{\sg_i\mid i\in I\}$ of $\Lam$, and set
$q_{ij}=\lam_t(\sg_i,\sg_j)$. Since $\lam_t$ is a bihomomorphism,
we have, for $\sg=\sum_{i\in I}n_i\sg_i$ and $\tau=\sum_{i\in
I}m_i\sg_i$, $\lam_t(\sg,\tau)={\prod_{i,j}q_{ij}^{n_im_j}}$, with
$q_{ij}=q_{ji}^{-1}$ and $q_{ii}=1$ for all $i,j\in I$. We note
that as $n_i$'s and $m_i$'s are zero almost for all $i$, the above
product makes sense. In the literature a matrix $(q_{ij})_{i,j\in
I}$ (possibly of infinite rank)  satisfying $q_{ii}=1$ and
$q_{ij}=q_{ji}^{-1}$, for all $i,j$, is called a {\it quantum
matrix}. A quantum matrix $\bq$ is called {\it elementary} if
$q_{ij}\in\{\pm 1\}$ for all $i,j$. For $i\in I$, we set
$y_i:=x^{\sg_i}$. Also for $\sg=\sum_{i\in I}n_i\sg_i$, we set
{$y^\sg=1$ if $\sg=0$ and if $\sg\not=0$, we set} $y^\sg:=y_{i_1}^{n_{i_1}}\cdots y_{i_k}^{n_{i_k}}$ where
$i_1<\cdots <i_k$ are all indices for which $n_{ij}\not=0$. Then
we have $\aa=\bigoplus_{\sg\in\Lam}\bbbf y^\sg$, and for all
$i,j\in I$,
\begin{equation}\label{eq1}
y_iy_j=q_{ij}y_jy_i\andd y_iy_i^{-1}=y_i^{-1}y_i=1.
\end{equation}
{The $\Lam$-torus} $\aa$ can be described as the unital
associative algebra defined by generators {$1, y_i,y_i^{-1}$} and
relations (\ref{eq1}), induced from the quantum matrix
$\bq:=(q_{ij})$. In this case, we denote $\aa$ by
$\aa=(\bbbf^t[\Lam],\bq)$ and call it the {\it quantum torus}
determined by the quantum matrix $\bq$.}
\qedexa
\end{exa}

Here is a generalization of \cite[Lemma 4.6]{Yo1} to the torsion free
case.

\begin{lem}\label{tori-12}
Let $G$ be a torsion free abelian group and $\aa$ be an associative
algebra. If $\aa^+$ is a Jordan $G$-torus, then
$\aa\cong_G(\bbbf^t[G],\lam)$ for some $2$-cocycle $\lam$. In
particular, if $G$ is free abelian, then $\aa\cong_G(\bbbf^t[G],\bq)$
for some quantum matrix $\bq$.
\end{lem}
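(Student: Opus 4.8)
The plan is to show that $\aa$ is a $G$-torus as an associative algebra and then invoke Lemma \ref{tori-11}. Recall that $\aa^+$ has the same underlying vector space and grading as $\aa$, so conditions (T1) and (T3) for $\aa$ as a $G$-torus are literally the same statements as for $\aa^+$ and are immediate. The only thing that needs work is (T2): every nonzero homogeneous element of $\aa$ must be invertible \emph{in the associative algebra} $\aa$. Once (T2) is established, Lemma \ref{tori-11} gives $\aa\cong_G(\bbbf^t[G],\lam)$, and the ``in particular'' clause follows because when $G$ is free abelian the $2$-cocycle may be normalized, via Example \ref{gday-1} (choosing an ordered basis and setting $q_{ij}=\lam_t(\sg_i,\sg_j)$), to the form coming from a quantum matrix.

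So fix $0\neq x\in\aa^\sg$. By (T2) for $\aa^+$, $x$ is invertible in the Jordan algebra $\aa^+$; in particular there is a homogeneous element whose Jordan product with $x$ behaves like an inverse, and more usefully $x$ is not a zero divisor in $\aa^+$. First I would handle the case $\sg=0$: then $x\in\aa^0$, and since $\dim_\bbbf\aa^0\leq 1$ and $1\in\aa^0$, we get $x\in\bbbf^\times 1$, which is invertible in $\aa$. For general $\sg$, the idea is to pass to a finitely generated piece. Let $\mfm_0=\{\sg\}$ and apply Lemma \ref{b3}: with $\mathcal M$ the finite subsets of $\supp(\aa^+)$ containing $\sg$, each $T_\mfm=\aa^+_{G_\mfm}$ is a Jordan $G_\mfm$-torus, $G_\mfm=\la\mfm\ra$ is a free abelian group of finite rank (as $G$ is torsion free), and $\aa^+$ is the direct union of the $T_\mfm$. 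Since $x\in T_\mfm$ for every $\mfm\in\mathcal M$, and the associative subalgebra $\aa_\mfm:=\bigoplus_{\tau\in G_\mfm}\aa^\tau$ of $\aa$ satisfies $(\aa_\mfm)^+=T_\mfm$, it suffices to show $x$ is invertible in $\aa_\mfm$ for a single $\mfm$ — its inverse there is then an inverse in $\aa$. Thus the problem reduces to the finite-rank case, where it is exactly \cite[Lemma 4.6]{Yo1}: if $\aa_\mfm$ is associative and $(\aa_\mfm)^+$ is a Jordan torus over a free abelian group of finite rank, then $\aa_\mfm$ is an associative torus. (Alternatively one can argue directly: $x^2\in\aa^{2\sg}$ is a nonzero homogeneous element, hence invertible in $\aa^+$, and since for associative $\aa$ the Jordan inverse of $x^2$ is its associative inverse, $x^2$ is invertible in $\aa$; because $G_\mfm$ is torsion free, $\aa$ is a domain by Lemma \ref{b3}(v), and invertibility of $x^2=x\cdot x$ in the domain $\aa_\mfm$ forces invertibility of $x$.)

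The main obstacle is precisely this local step — bridging Jordan invertibility to associative invertibility — and the cleanest route is to cite \cite[Lemma 4.6]{Yo1} in the finite-rank case and lift via the direct union structure of Lemma \ref{b3}(ii). The remaining steps (reduction to $G_\mfm$, the $\sg=0$ case, the normalization to a quantum matrix when $G$ is free) are routine. One should be slightly careful that the inverse produced in $\aa_\mfm$ is homogeneous of degree $-\sg$ and independent of $\mfm$; this follows from uniqueness of inverses in a domain (Lemma \ref{b3}(v),(vi)), so the locally-defined inverses glue to a genuine element $x^{-1}\in\aa^{-\sg}$, establishing (T2) for $\aa$ and completing the proof.
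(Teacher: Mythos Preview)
There is a genuine gap: you have misidentified the main obstacle. Condition (T2) is essentially free --- an element of a unital associative algebra $\aa$ is invertible in $\aa$ if and only if it is invertible in $\aa^+$, so (T2) for $\aa$ follows at once from (T2) for $\aa^+$. What actually needs to be proved is that the vector space decomposition $\aa=\bigoplus_{\sg}\aa^\sg$ is a grading for the \emph{associative} product, i.e.\ $\aa^\sg\aa^\tau\subseteq\aa^{\sg+\tau}$; a priori you only know $\aa^\sg\circ\aa^\tau\subseteq\aa^{\sg+\tau}$. Your sentence ``$\aa^+$ has the same underlying vector space and grading as $\aa$'' assumes exactly this point. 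The direct-union reduction is then circular: you declare $\aa_\mfm:=\bigoplus_{\tau\in G_\mfm}\aa^\tau$ to be an associative subalgebra of $\aa$, but closure of $\aa_\mfm$ under the associative product is precisely the grading condition $\aa^\sg\aa^\tau\subseteq\aa^{\sg+\tau}$ restricted to $G_\mfm$, which you have not established. Without this you cannot apply \cite[Lemma~4.6]{Yo1} to $\aa_\mfm$.

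The paper's proof attacks the grading directly. Fix nonzero $x\in\aa^\sg$, $y\in\aa^\tau$. As above both are invertible in $\aa$, hence so are $xy$ and $yx$; these are then invertible in $\aa^+$ and therefore homogeneous by Lemma~\ref{b3}(vi). If $x\circ y\neq 0$, then the equality $xy+yx=2(x\circ y)\in\aa^{\sg+\tau}$ together with homogeneity of $xy$ and $yx$ forces $xy\in\aa^{\sg+\tau}$. If $x\circ y=0$, then $xy=-yx\in\aa^\d$ for some $\d$, and one computes $(xy)^2=-x^2y^2=-y^2x^2$, so $(xy)^2$ equals a nonzero scalar multiple of $x^2\circ y^2\in\aa^{2\sg+2\tau}$, while also lying in $\aa^{2\d}$; torsion-freeness of $G$ gives $\d=\sg+\tau$. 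That is the actual content of the lemma, and your proposal does not supply it.
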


\proof By Lemma \ref{tori-11}, we must show that $\aa$ is an
associative $G$-torus. Since $\aa^+$ is a Jordan torus, we have
$\aa^+=\aa=\bigoplus_{\sg\in g}\aa^\sg$ with $G=\la\sg\in G\mid
\aa^\sg\not=0\ra$ and $\dim\aa^\sg\leq 1$ for all $\sg\in G$. So it
only remains to show that $\aa$ is $G$-graded, namely
$\aa^\sg\aa^\tau\sub\aa^{\sg+\tau}$, for all $\sg,\tau\in G$. We
{proceed with showing} this for fixed $\sg,\tau\in G$. We may assume without
loos of generality that both $\aa^\sg$ and $\aa^\tau$ are non-zero.
Let $0\not=x\in\aa^\sg$ and $0\not=y\in\aa^\tau$. {By Lemma
\ref{b3}, $x$ and $y$ are invertible in $\aa^+$ and so they are invertible in $\aa$. Therefore $xy$
and $yx$ are invertible in $\aa$ and so in $\aa^+$. Then by Lemma \ref{b3}, both $xy$ and $yx$ are
homogenous in $\aa$.} Now as $x\circ y=xy+yx\in\aa^{\sg+\tau}$, we
conclude that $xy\in\aa^{\sg+\tau}$ if $x\circ y\not=0$. Suppose now
that $x\circ y=0$. Then $xy=-yx\in\aa^\d$ for some $\d\in G$ and as
$\aa$ is associative, $(xy)^2=-x^2y^2=-y^2x^2$. Therefore,
$$0\not=(xy)^2=-\frac{1}{2}(x^2y^2+y^2x^2)=-\frac{1}{2}(x^2\circ
y^2)\in\aa^{2\d}\cap\aa^{2\sg+2\tau}.$$ Thus $\d=\sg+\tau$. The
second statement follows immediately from Example \ref{gday-1}.\qed

\subsection{\bf Involutorial associative
$G$-tori}\label{associative-involution}\setcounter{equation}{0}

Let
$\aa=(\bbbf^t[G],\lam)$ be an associative $G$-torus. Assume further
that $\aa$ is equipped with a graded involution $\bar{\;}$. Namely a
period $2$ anti-automorphism $\bar{\;}$ satisfying
$\overline{\aa^\sg}=\aa^\sg$, $\sg\in G$. Then for $\sg\in G$, we
have $\overline{x^\sg}=a_\sg x^\sg$ where $a_\sg\in\bbbf^\times$
satisfies $a_\sg^2=1$. So, for $\sg\in G$,
$$\overline{x^\sg}=(-1)^{q(\sg)}x^\sg$$
where $q$ is map from $G$ into the field $\bbbf_2$ of $2$
elements. We note that
$$(-1)^{q(\sg+\tau)}x^{\sg+\tau}=\overline{x^{\sg+\tau}}=\lam(\sg,\tau)^{-1}\overline{x^\tau}\;\overline{x^\sg}=
\lam(\sg,\tau)^{-1}\lam(\tau,\sg)(-1)^{q(\sg)+q(\tau)}x^{\sg+\tau}.$$
Thus
\begin{equation}\label{gday5}
(-1)^{\b_q(\sg,\tau)}=\lam_t(\sg,\tau),
\end{equation}
where $\b_q:G\times G\longrightarrow\bbbf_2$ is defined by
$\b_q(\sg,\tau)=q(\sg)+q(\tau)-q(\sg+\tau)$. Now $\lam_t$ being a
bihomomorphism implies that $\b_q$ is also a group bihomomorphism.
Therefore, by definition, $q:G\longrightarrow\bbbf_2$ is a quadratic
map.

Conversely, starting from an associative $G$-torus
$\aa=(\bbbf^t[G],\lam)$ and a quadratic map $q:G\longrightarrow
\bbbf_2$ satisfying $(-1)^{\b_q(\sg,\tau)}=\lam_t(\sg,\tau), $ one
can {define} a graded involution $\bar{\;}$ on $\aa$ by
$\overline{x^\sg}=(-1)^{q(\sg)}x^\sg$. In fact, it is clear that
$\bar{\;}$ is a period $2$ isomorphism of $\bbbf$-vector spaces.
Moreover, for $\sg,\tau\in G$, we have
\begin{eqnarray*} \overline{x^\sg x^{\tau}}&=&\lam(\sg,\tau)\overline{x^{\sg+\tau}}\\
&=&\lam(\sg,\tau)(-1)^{q(\sg+\tau)}x^{\sg+\tau}\\
&=&\lam(\sg,\tau)(-1)^{q(\sg+\tau)}\lam(\tau,\sg)^{-1}x^\tau x^\sg\\
&=&(-1)^{\beta_q(\sg,\tau)+q(\sg,\tau)} x^\tau x^\sg\\
&=&(-1)^{q(\sg)+q(\tau)}x^\tau x^\sg\\
&=& \overline{x^\tau}\;\overline{x^\sg}.
\end{eqnarray*}

\begin{DEF}\label{gday4}{\em
Let $\aa=(\bbbf^t[G],\lam)$ be a $G$-torus and
$q:G\longrightarrow\bbbf_2$ be a quadratic map satisfying (\ref{gday5}).
We denote the induced involution on $\aa$ by $\theta_q$. We recall that in this case
$H(\aa,\theta_q)=H((\bbbf^t[G],\lam),\theta_q)$ is a subalgebra of $\aa^+$.}
\end{DEF}

{Let $J$ be a Jordan $G$-torus. By Lemma \ref{b3}, $J$ is strongly prime, so by Zelmanov's Prime Structure Theorem \cite[pg. 200]{MZ},
$J$ has one of the types, {\it Hermitian}, {\it Clifford}, or {\it Albert}. We recall that
$J$ is of Hermitian type if $J$ is special and $q_{48}(J )\not=\{0\}$ (the term
$q_{48}(J)$ will be explained in the next section). Also $J$ is of Clifford type if the central closure
$\bar{J}$ is a Jordan algebra over $\bar{Z}$ of a
symmetric bilinear form.
Finally $J$ is of Albert type if the central closure $\bar{J}$ is an Albert algebra over $\bar{Z}$. In the remaining sections,
we study each of the mentioned types separately.}

\renewcommand{\theequation}{\thesection.\arabic{equation}}
\section{\bf {Jordan tori of Hermitian type}}\label{Hermitian-type}\setcounter{equation}{0}
{Throughout this section $G$ is a torsion free abelian group, unless otherwise mentioned. All associative algebras are assumed to be unital.
We assume that any algebra homomorphism from  a unital
algebra to a unital algebra maps 1 to 1. We recall that a Jordan torus $J$ is called a {\it Hermitian torus}
if there exists an involutorial associative algebra $(\aa,*)$ which is $*$-prime such that $\aa$ is generated by $J$ and $J = H(\aa, *)$.
}

We make a convention that for two elements  $x,y$ of  an associative algebra, by $[x,y],$ we mean $xy-yx$ and by $x\circ y,$ we mean $xy+yx.$
Suppose that $X$ is an infinite set and $\mathfrak{a}(X)$ is the free
associative algebra on $X.$ We consider the special Jordan algebra
$\mathfrak{a}(X)^+$ and take $\mathfrak{fsj}(X)$ to be the subalgebra
of $\mathfrak{a}(X)^+$ generated by $X.$ We refer to
$\mathfrak{fsj}(X)$ as  the {\it free special Jordan algebra on $X.$}
We recall that an ideal $I$ of $\fsj(X)$ is called {\it formal} if
for each polynomial $p(x_1,\ldots,x_n)\in I$ with $x_1,\ldots,x_n\in
X,$ and each permutation $\sg$ of $X,$ one concludes
$p(\sg(x_1),\ldots,\sg(x_n))\in I.$
A formal ideal $H$ of $\fsj(X)$
is called {\it Hermitian} if it is closed under $n$-tads
($n\in\bbbn^{\geq 4}$), i.e., for $x_1,\ldots,x_n\in H,$ ($n\in\bbbn^{\geq 4}$),
$\{x_1,\ldots,x_n\}:=x_1\cdots x_n+x_n\cdots x_1\in H.$
Now suppose that $H(X)$ is a Hermitian ideal of
$\fsj(X).$ For  an $i$-special Jordan algebra (i.e., a quotient algebra of a special Jordan algebra)  $J,$
by $H(J),$ we mean the evaluation of $H(X)$ on $J;$ $H(J)$ is called
a {\it Hermitian part} of $J.$ It is well known that a Hermitian part
$H(J)$ of an $i$-special Jordan algebra $J$ corresponding to a
Hermitian ideal $H(X)$ of $\fsj(X)$ is an ideal of $J.$

Now for $x,y,z,w\in X,$ we take $D_{x,y}(z):=[[x,y],z]$ and set $$p_{16}(x,y,z,w):=[[D^2_{x,y}(z)^2,D_{x,y}(w)],D_{x,y}].$$ Then $q_{48}:=[[p_{16}(x_1,y_1,z_1,w_1),p_{16}(x_2,y_2,z_2,w_2)],p_{16}(x_3,y_3,z_3,w_3)]$ is a polynomial in the free associative algebra on $X$
in 12 variables $x_i,y_i,z_i,w_i,$ $1\leq i\leq 4.$ Take  $Q_{48}$ to
be the  linearization-invariant $T$-ideal of $\fsj(X)$ generated by
$q_{48}.$ It means that $Q_{48}$ is the smallest ideal of
$\fsj(X)$ containing $q_{48}$ with the following two properties, if
$p$ is a polynomial in $Q_{48},$ then each linearization of $p$ is
also an element of $Q_{48}$ and that $Q_{48}$ is invariant
under all algebra endomorphisms of $\fsj(X).$ We note that  for 12
variables $x_i,y_i,z_i,w_i,$ $1\leq i\leq 4,$ each monomial of $q_{48}$
is a product of 12 variables $x_i,y_i,z_i,w_i,$ $1\leq i\leq 4,$ and
monomials have the same number of $x\in\{x_i,y_i,z_i,w_i\mid 1\leq
i\leq 4\}.$ So each polynomial  in $Q_{48}$ is a summation of
monomials having the same partial degree. using the same argument as in \cite[Lemma 4.1]{Yo1}, we have the following lemma:
\begin{lem}
\label{q48} Suppose $J$ is a Jordan
$G$-torus of Hermitian type, then $J=H(P,*)$ for an associative algebra {$P$} with an involution $*$ such that $P$ is
$*$-prime and {is} generated by $J.$
\end{lem}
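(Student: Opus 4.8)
The plan is to follow the strategy of \cite[Lemma 4.1]{Yo1}, adapting it from the free abelian case to the torsion free abelian case by invoking the direct-union machinery already developed in Lemma \ref{b3}. The starting point is Zelmanov's Prime Structure Theorem: since $J$ is a Jordan $G$-torus, Lemma \ref{b3}(v) says $J$ is strongly prime, so $J$ is of Hermitian, Clifford, or Albert type, and by hypothesis $J$ is of Hermitian type. By definition this means $J$ is special and $q_{48}(J)\neq\{0\}$, where $q_{48}(J)$ denotes the evaluation on $J$ of the Hermitian ideal generated by the $12$-variable polynomial $q_{48}$ constructed above. The goal is to produce an associative $*$-prime algebra $P$, generated by $J$, with $J=H(P,*)$. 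This is precisely the conclusion that Zelmanov--McCrimmon theory attaches to strongly prime Hermitian Jordan algebras: a strongly prime Jordan algebra that is special and has nonzero Hermitian part embeds as $H(P,*)$ in its \emph{$*$-tight envelope}, where $P$ is the associative subalgebra generated by $J$ inside a suitable $*$-prime associative algebra, and $*$-primeness of $P$ follows from strong primeness of $J$ together with the Hermitian identities. So the first step is to record this general fact (citing \cite{MZ} and the relevant part of \cite{ZSSS}, exactly as in \cite[Lemma 4.1]{Yo1}).

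The second step is to check that the ``free abelian of finite rank'' hypothesis in \cite[Lemma 4.1]{Yo1} is never actually used in an essential way: the construction of the $*$-tight envelope and the verification of $*$-primeness are purely ring-theoretic statements about the abstract Jordan algebra $J$ and make no reference to the grading group at all. The grading enters only insofar as one wants $P$ to inherit a compatible $G$-grading and $*$ to be a graded involution; but the statement of Lemma \ref{q48} as written asks only for an abstract associative $*$-prime algebra $P$ generated by $J$ with $J=H(P,*)$, so even that refinement is not needed here. Thus I would simply transcribe the argument of \cite[Lemma 4.1]{Yo1} verbatim, remarking that every step goes through word for word.

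If one does want the grading (which will be convenient in the subsequent analysis of Hermitian tori), the third step is the direct-union argument. Apply Lemma \ref{b3}: with $\mathcal M$ the directed set of finite subsets of $\supp(J)$ containing a fixed $\mathfrak m_0$, each $J_{\mathfrak m}=\sum_{\sigma\in G_{\mathfrak m}}J^\sigma$ is a $G_{\mathfrak m}$-torus with $G_{\mathfrak m}$ free abelian of finite rank, and $J$ is the direct union of the $J_{\mathfrak m}$. Each $J_{\mathfrak m}$ is again of Hermitian type (the polynomial identities defining the type are inherited by and detected on the direct union, so $q_{48}$ vanishes identically on $J$ iff it vanishes on every $J_{\mathfrak m}$, and by strong primeness of $J$ it is nonzero on $J$, hence nonzero on cofinally many $J_{\mathfrak m}$; restricting $\mathcal M$ past that point we may assume it is nonzero on all of them). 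Then \cite[Lemma 4.1]{Yo1} applies to each $J_{\mathfrak m}$, yielding $*$-prime associative $G_{\mathfrak m}$-graded $P_{\mathfrak m}$ with $J_{\mathfrak m}=H(P_{\mathfrak m},*)$, and one assembles a direct system of these envelopes whose direct limit $P$ is $G$-graded, generated by $J$, with $J=H(P,*)$; $*$-primeness of $P$ passes to the limit because an ideal of $P$ meeting its $*$-image would already do so at some finite stage.

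The main obstacle I anticipate is in the third step, not the first two: one must check that the $*$-tight envelopes $P_{\mathfrak m}$ can be chosen \emph{functorially} in $\mathfrak m$, i.e. that the inclusions $J_{\mathfrak m}\hookrightarrow J_{\mathfrak n}$ for $\mathfrak m\subseteq\mathfrak n$ lift to $*$-homomorphisms $P_{\mathfrak m}\to P_{\mathfrak n}$ forming a direct system, and that these lifts are injective so that the direct limit actually contains $J$. Since $P_{\mathfrak m}$ is (canonically) the associative subalgebra generated by $J_{\mathfrak m}$ inside the tight envelope, the lift is forced and injectivity follows from $*$-primeness; but this needs to be stated carefully. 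Given the phrasing of Lemma \ref{q48} (which demands only an abstract $*$-prime $P$), however, the cleanest route is the second step: cite \cite[Lemma 4.1]{Yo1} and note that its proof uses nothing about the group beyond what holds for any torsion free abelian $G$.
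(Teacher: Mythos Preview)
Your bottom line---that the argument of \cite[Lemma 4.1]{Yo1} transfers verbatim because nothing in it uses freeness or finite rank of $G$---is correct and is exactly what the paper does: the paper's proof is suppressed with the remark ``using the same argument as in \cite[Lemma 4.1]{Yo1}.'' Your second step is the whole story; the direct-union apparatus of your third step is not needed for this lemma.

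One point to sharpen: your first step slightly overstates what the general McCrimmon--Zelmanov theory provides. The Special Hermitian Structure Theorem does not by itself yield $J=H(P,*)$; it yields this for the Hermitian part $Q_{48}(J)$. The torus-specific ingredient in \cite[Lemma 4.1]{Yo1} (and in the paper's suppressed argument) is the equality $Q_{48}(J)=J$: every polynomial in $Q_{48}$ is a sum of monomials of the same multidegree, so evaluating one on homogeneous elements of $J$ produces a homogeneous element; since $q_{48}(J)\neq 0$, some such evaluation is nonzero, hence invertible, and the ideal $Q_{48}(J)$ contains a unit and equals $J$. This homogeneity-plus-invertibility step is what actually uses the torus hypothesis, and it is indifferent to whether $G$ is free abelian of finite rank or merely torsion free---which is precisely why the argument carries over unchanged.
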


\begin{lem}
\label{4.9 yoshii} {Suppose that $J$ is a Jordan $G$-torus.} Suppose that $\bbbe$ is a quadratic field
extension of $\bbbf,$ $\sg_\bbbe$ is the nontrivial Galois automorphism and $\lam:G\times G\longrightarrow \bbbe^\times$
is a $2$-cocycle. Assume $\bbbe\otimes_\bbbf J\simeq_G(\bbbe^t[G],\lam)^+,$ say via $\varphi,$
then either there is a $2$-cocycle $\mu:G\times G\longrightarrow
\bbbf^\times$ such that $(\bbbe^t[G],\lam)\simeq_G (\bbbe^t[G],\mu)$
and $J\simeq_G (\bbbf^t[G],\mu)^+,$ or $J\simeq_G
H((\bbbe^t[G],\mu),\theta)$ for some $2$-cocycle $\mu$ satisfying $\sg_\bbbe(\mu(g_1,g_2))=\mu(g_2,g_1)$ for $g_1,g_2\in G,$ and an
$\sg_\bbbe$-semilinear anti-automorphism  $\theta,$ where  $(\bbbe^t[G],\mu)$ is considered as an
$\bbbf$-algebra.
\end{lem}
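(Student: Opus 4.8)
The plan is a Galois-descent argument, parallel to the free abelian case treated in \cite{Yo1}. Write $\aa:=(\bbbe^t[G],\lam)$, let $\iota:=\sg_\bbbe\otimes\mathrm{id}_J$ be the $\sg_\bbbe$-semilinear map on $\bbbe\otimes_\bbbf J$, and note that $\iota$ is a $G$-grading-preserving, period-$2$ automorphism of the Jordan algebra $\bbbe\otimes_\bbbf J$ whose fixed-point set is $1\otimes J\cong_G J$. Transporting through the given $G$-graded $\bbbe$-algebra isomorphism $\varphi$, set $\psi:=\varphi\circ\iota\circ\varphi^{-1}$. Then $\psi$ is a $\sg_\bbbe$-semilinear, grading-preserving, period-$2$ Jordan automorphism of $\aa^+$ with $\{a\in\aa\mid\psi(a)=a\}=\varphi(1\otimes J)\cong_G J$, so the whole statement reduces to understanding $\psi$.

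I would first record that $\aa$ is a domain: ordering the torsion-free group $G$ and comparing lowest-degree terms, exactly as in Lemma \ref{b3}(v) but now over the base field $\bbbe$, shows $\aa$ has no zero divisors; in particular $\aa$ is a prime ring. Since $\mathrm{char}\,\bbbf=0$ and $\psi$, regarded merely as an additive bijection with $\psi(a^2)=\psi(a)^2$, is a Jordan automorphism of the prime ring $\aa$, the classical theorem on Jordan automorphisms of prime rings (Jacobson--Rickart, Herstein) forces $\psi$ to be \emph{either} an automorphism \emph{or} an anti-automorphism of the associative algebra $\aa$. This dichotomy is the heart of the argument and is exactly what produces the two alternatives in the statement; checking its hypotheses is routine.

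Since $\psi$ preserves the grading we may write $\psi(x^\sg)=c(\sg)x^\sg$ with $c(\sg)\in\bbbe^\times$, and $\psi^2=\mathrm{id}$ forces $c(\sg)\,\sg_\bbbe(c(\sg))=1$. By Hilbert's Theorem $90$ for the quadratic extension $\bbbe/\bbbf$ there is $d(\sg)\in\bbbe^\times$ with $c(\sg)=d(\sg)/\sg_\bbbe(d(\sg))$; replacing the homogeneous basis of $\aa$ by $y^\sg:=d(\sg)x^\sg$ (Remark \ref{gday+12}) we get $\aa\cong_G(\bbbe^t[G],\mu)$ for the resulting $2$-cocycle $\mu$, and $\psi(y^\sg)=y^\sg$ for all $\sg\in G$, so that $\psi(\sum_\sg\alpha_\sg y^\sg)=\sum_\sg\sg_\bbbe(\alpha_\sg)y^\sg$.

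It remains to split into the two cases. If $\psi$ is an automorphism, applying it to $y^\sg y^\tau=\mu(\sg,\tau)y^{\sg+\tau}$ gives $\sg_\bbbe(\mu(\sg,\tau))=\mu(\sg,\tau)$, i.e.\ $\mu:G\times G\to\bbbf^\times$; then $\{a\mid\psi(a)=a\}=\bigoplus_\sg\bbbf y^\sg=(\bbbf^t[G],\mu)$, whence $J\cong_G(\bbbf^t[G],\mu)^+$ and $(\bbbe^t[G],\lam)\cong_G(\bbbe^t[G],\mu)$ --- the first alternative. If $\psi$ is an anti-automorphism, the same computation gives $\sg_\bbbe(\mu(\sg,\tau))=\mu(\tau,\sg)$, and with $\theta:=\psi$ --- an $\sg_\bbbe$-semilinear, period-$2$ anti-automorphism of $(\bbbe^t[G],\mu)$ viewed as an $\bbbf$-algebra --- we have $H((\bbbe^t[G],\mu),\theta)=\bigoplus_\sg\bbbf y^\sg$ (a Jordan subalgebra, since $\mu(\sg,\tau)+\mu(\tau,\sg)=\mathrm{Tr}_{\bbbe/\bbbf}\mu(\sg,\tau)\in\bbbf$) equal to $\varphi(1\otimes J)$, so $J\cong_G H((\bbbe^t[G],\mu),\theta)$ --- the second alternative. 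Besides the prime-ring input, the only points requiring a little care are the $\sg_\bbbe$-semilinearity and grading compatibility of $\psi$, and the closure statement in the last case.
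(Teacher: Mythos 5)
Your proposal is correct and follows essentially the same route as the paper: conjugate $\sg_\bbbe\otimes\mathrm{id}$ through $\varphi$, invoke the automorphism/anti-automorphism dichotomy for Jordan isomorphisms of a prime (indeed domain) associative algebra (the paper cites \cite[Lemma 1.1.7]{J1} for this), and then read off the two alternatives from the behaviour of the cocycle on a $\theta$-fixed homogeneous basis. The only cosmetic difference is that you manufacture that fixed basis via Hilbert's Theorem 90, whereas the paper obtains it for free as $y^g=\varphi(1\otimes x^g)$ for a homogeneous basis $x^g$ of $J$ itself.
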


\begin{proof}
Since   $\bbbe$ is a quadratic field  extension of
$\bbbf,$ there is an  irreducible polynomial on $\bbbf$ of degree 2 with distinct roots $e,f.$ Then
$\bbbe=\bbbf+e\bbbf$ {and} $\sg_\bbbe:\bbbe\longrightarrow \bbbe$ is
 the Galois automorphism  mapping   $e$ to $f.$ Set
 $\tau:=\sg_\bbbe\otimes id:\bbbe\otimes J\longrightarrow \bbbe\otimes J.$ Since for $x,y\in E$ and $a\in J,$ we have $\tau(xy\otimes a)=\sg_\bbbe(xy)\otimes a=\sg_\bbbe(x)\sg_\bbbe(y)\otimes a=\sg_\bbbe(x)(\sg_\bbbe(y)\otimes a),$ we get that $\tau$ is a $\sg_\bbbe$-semilinear automorphism  of
the Jordan algebra $\bbbe\otimes J.$ Consider the $\bbbe$-Jordan algebra  isomorphism  $\varphi:\bbbe\otimes_\bbbf J\longrightarrow (\bbbe^t[G],\lam)^+,$
 then $\theta:=\varphi\tau\varphi^{-1}$ is a Jordan $\sg_\bbbe$-semilinear automorphism on $(\bbbe^t[G],\lam)^+.$
 Next we note that as $\theta:=\varphi\tau\varphi^{-1}$ is a Jordan $\sg_\bbbe$-semilinear automorphism  on $(\bbbe^t[G],\lam)^+,$  it is also  an $\bbbf$-linear automorphism of the $\bbbf$-Jordan algebra  $(\bbbe^t[G],\lam)^+.$ So by \cite[Lemma 1.1.7]{J1} either $\theta$ is an associative algebra $\sg_\bbbe$-semilinear automorphism on $(\bbbe^t[G],\lam)$ or it is an associative algebra  $\sg_\bbbe$-semilinear anti-automorphism  on $(\bbbe^t[G],\lam)$ which is not an automorphism.
 We know that  $\bbbf\otimes J=H(\bbbe\otimes J,\tau)$ and  the restriction of $\varphi$ to $H(\bbbe\otimes J,\tau)$ is an $\bbbf$-Jordan algebra isomorphism from $J\simeq H(\bbbe\otimes J,\tau)$ to $H((\bbbe^t[G],\lam),\theta).$ So to complete the proof, we show that either $H((\bbbe^t[G],\lam),\theta)= (\bbbf^t[G],\mu)^+,$ for a 2-cocycle $\mu:G\times G\longrightarrow\bbbf$ or $H((\bbbe^t[G],\lam),\theta)=
H((\bbbe^t[G],\mu),\theta)$ for some $2$-cocycle $\mu$ satisfying $\sg_\bbbe(\mu(g_1,g_2))=\mu(g_2,g_1)$ for $g_1,g_2\in G.$

We fix $0\neq x^g\in J_g$ ($g\in G$), so $\{x^g\mid g\in G\}$ is an $\bbbf$-basis for $J$ and an $\bbbe$-basis for $\bbbe\otimes_\bbbf J$ (here  we identify $J$ with  $\{1\otimes x\mid x\in J\}\sub\bbbe\otimes J$). Now as for $g\in G,$ $\tau(x^g)=x^g$ and $\varphi$ is a $G$-graded isomorphism, $\{y^g:=\varphi(x^g)\mid g\in G\}$ is  a basis for the $\bbbe$-vector space $\bbbe^t[G]$ consisting of  homogeneous elements fixed by $\theta.$ Now let $\mu:G\times G\longrightarrow \bbbe$ be the $2$-cocycle corresponding to this new basis; see Remark \ref{gday+12}. We note that
 $$(\bbbe^t[G],\lam)=(\bigoplus_{g\in G}\bbbe y^\sg,\mu)=\bigoplus_{g\in G}\bbbf y^\sg+\bigoplus_{g\in G}e\bbbf y^\sg$$ and $$H((\bbbe^t[G],\lam),\theta)=\bigoplus_{g\in G}\bbbf y^\sg.$$

Now we consider the  two cases that either $\theta$ is an associative algebra $\sg_\bbbe$-semilinear automorphism on $(\bbbe^t[G],\lam)=(\bbbe^t[G],\mu)$ or it is an associative algebra  $\sg_\bbbe$-semilinear anti-automorphism  on $(\bbbe^t[G],\lam)=(\bbbe^t[G],\mu)$ which is not an automorphism. In the former case, for $g_1,g_2\in G,$
    we have
    $0=\theta(y^{g_1}y^{g_2}-\mu(g_1,g_2)y^{g_1+g_2})=y^{g_1}y^{g_2}-\sg_\bbbe(\mu(g_1,g_2))y^{g_1+g_2}.$
    So we have
    $\mu(g_1,g_2)y^{g_1+g_2}=y^{g_1}y^{g_2}=\sg_\bbbe(\mu(g_1,g_2))y^{g_1+g_2}.$
    Therefore $\mu(g_1,g_2)\in\bbbf.$ Now  we have
    $$(\bbbe^t[G],\lam)=\bigoplus_{g\in G}\bbbe y^\sg=\bigoplus_{g\in
    G}\bbbf y^\sg+\bigoplus_{g\in G}e\bbbf y^\sg$$ and  as $\mu(G,G)\sub\bbbf,$  $\bigoplus_{g\in G}\bbbf
    y^\sg$ is closed under the associative product on
    $(\bbbe^t[G],\mu)$ and $H((\bbbe^t[G],\lam)=
    H((\bbbe^t[G],\mu),\theta)=\bigoplus_{g\in G}\bbbf y^\sg$
    can be identified with $(\bbbf^t[G],\mu)^+.$ In the latter case, for $g_1,g_2\in G,$
    we have
    $0=\theta(y^{g_1}y^{g_2}-\mu(g_1,g_2)y^{g_1+g_2})=y^{g_2}y^{g_1}-\sg_\bbbe(\mu(g_1,g_2))y^{g_1+g_2}.$ So  $\sg_\bbbe(\mu(g_1,g_2))=\mu(g_2,g_1).$
This completes the proof.
\end{proof}

The following generalizes \cite[Proposition 4.7]{Yo1} to
the torsion free case.

\begin{pro}\label{jordan-14}
{Let $\aa$ be an involutorial associative
algebra and assume that $J:=H(\aa,*)$ is a  Jordan $G$-torus generating
$\aa$.}

(a) Suppose that there exists $a\in\aa$ such that $aa^*=0$ and
$a+a^*$ is invertible in $J$. Then $J\cong_G(\bbbf^t[G],\lam)^+$ for
some $2$-cocycle $\lam$.

(b) Suppose there exists an invertible element $a\in\aa$ such that
$a^*=-a$ and $0\not=y\in J_\gamma$ for some $\gamma\in G$ such that
$a^2\in J_{2\gamma}$, $ay^{-1}a\in J_\gamma$ and $[a,y]\in
J_{2\gamma}$. Then $J\cong_G(\bbbf^t[G],\lam)^+$ or $E\otimes_\bbbf
J\cong_G(\bbbe^t[G],\lam)^+$ for some $2$-cocycle $\lam$.
\end{pro}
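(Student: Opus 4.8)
The plan is to reduce both statements to the criterion of Lemma~\ref{tori-12} together with Lemma~\ref{4.9 yoshii}: once we produce inside $\aa$ an element that behaves like a ``square root of an invertible homogeneous element of $J$'' and lies off the diagonal, we can pass to the $\bbbf$-algebra it generates and recognize it (possibly after a quadratic extension) as an associative $G$-torus, which forces $J$ to be of plus or extension type. Throughout I will use Lemma~\ref{b3}: every nonzero homogeneous element of $J$ is invertible, $J$ is a domain, and invertibility in $J$ is equivalent to homogeneity; moreover $\aa=\bar Z\text{-span stuff}$ behaves well under passing to the central closure. I would first reduce to the case where $\aa$ itself is a domain: since $J$ is strongly prime, $(\aa,*)$ is $*$-prime, and replacing $\aa$ by its central closure $\bar\aa$ over $\bar Z$ we get an involutorial associative algebra over $\bar Z$ which, because $J$ is special of Hermitian type with $\bar J$ a domain, is either a domain or a direct sum of two anti-isomorphic domains exchanged by $*$; in the latter case the diagonal argument of Lemma~\ref{4.9 yoshii} already hands us the extension-type conclusion. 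So assume $\aa$ is a domain.

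For part (a): set $e:=\tfrac12(a+a^*)^{-1}(a-a^*)$, or more directly work with $b:=a(a+a^*)^{-1}$. From $aa^*=0$ one computes $b^2=b$ (using $a^*a+aa^*=(a+a^*)^2 - a^2 - (a^*)^2$; more simply, $b(1-b)=a(a+a^*)^{-1}a^*(a+a^*)^{-1}$, and $a^*(a+a^*)^{-1}$ annihilates $a$ on the left since $aa^*=0$ gives $a a^* = 0$, hence $b(1-b)=0$), so $b$ is a nonzero idempotent; in a domain this forces $b=1$, i.e.\ $a^*=0$, which is impossible unless $a=0$ — the cleaner route is Yoshii's: $aa^*=0$ in a domain gives $a=0$ or $a^*=0$, but then $a+a^*$ cannot be invertible. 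Hence the hypothesis of (a) can only be met when $\aa$ is \emph{not} a domain, i.e.\ when we are in the split $*$-prime case $\bar\aa\cong D\oplus D^{\mathrm{op}}$. There $H(\bar\aa,*)\cong D$ as a Jordan algebra, $D$ is a $G/\Gamma$-graded domain, and lifting a homogeneous basis shows $\aa\cong_G (\bbbf^t[G],\lam)\oplus(\bbbf^t[G],\lam)^{\mathrm{op}}$ with $J\cong_G (\bbbf^t[G],\lam)^+$; this is exactly the plus conclusion.

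For part (b): here $a^*=-a$ is skew and invertible, so $a^2=-aa^*$ is a nonzero symmetric element, hence $a^2\in J$, and by hypothesis $a^2\in J_{2\gamma}$; since $J_{2\gamma}$ is one-dimensional and $y^2\in J_{2\gamma}$ too, we get $a^2=c\,y^2$ for some $c\in\bbbf^\times$. If $c$ is a square in $\bbbf$, rescale $a$ so that $a^2=y^2$; then $z:=ay^{-1}\in\aa$ satisfies $z^2=ay^{-1}ay^{-1}$, and the hypotheses $ay^{-1}a\in J_\gamma$ and $[a,y]\in J_{2\gamma}$ let us show $z^2$ is homogeneous and $z$ commutes with $y$ up to a scalar, so that $\aa$ is generated by $J$ together with one homogeneous ``square root'', forcing — by the argument of Lemma~\ref{tori-12} applied to $\aa$ — that $\aa$ (equivalently $J=\aa^+$) is an associative $G$-torus: the plus case. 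If $c$ is a non-square, pass to $\bbbe:=\bbbf(\sqrt c)$; then $\bbbe\otimes_\bbbf \aa$ acquires the element $z=\sqrt c^{-1}ay^{-1}$ with $z^2=1$ type behaviour, and the same argument over $\bbbe$ gives $\bbbe\otimes_\bbbf J\cong_G(\bbbe^t[G],\lam)^+$.

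The main obstacle is the bookkeeping in part (b): verifying that the single new generator $z=ay^{-1}$ (or its $\bbbe$-rescaling) together with $J$ generates an algebra in which \emph{every} product of homogeneous elements is again homogeneous — i.e.\ checking $zJ_\sigma\subseteq (\text{a single graded piece})$ for all $\sigma$ — which is where the three technical hypotheses $a^2\in J_{2\gamma}$, $ay^{-1}a\in J_\gamma$, $[a,y]\in J_{2\gamma}$ are consumed. Once the grading is established, Lemma~\ref{tori-12} (its proof, not just its statement) finishes the identification, and Lemma~\ref{4.9 yoshii} converts the quadratic-extension case into one of the listed normal forms. I expect the idempotent/domain dichotomy at the start to be the conceptually delicate point, and the rest to be a careful but routine unwinding following \cite[Proposition 4.7]{Yo1}.
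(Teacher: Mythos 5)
Your overall plan (reduce everything to Lemma \ref{tori-12}, split according to whether the associative envelope is a domain, and allow a quadratic extension in the remaining case) points in the same direction as the paper, whose entire proof is a citation: part (a) is ``$J$ is a domain by Lemma \ref{b3}(v), so \cite[Lemma 4.5]{Yo1} gives $J\cong\mathcal B^+$ for some associative algebra $\mathcal B$, and Lemma \ref{tori-12} finishes,'' and part (b) is deferred verbatim to \cite[Proposition 4.7(b)]{Yo1}. But your attempt to reconstruct those citations has concrete gaps. In (a), the identity $b(1-b)=a(a+a^*)^{-1}a^*(a+a^*)^{-1}$ does not vanish ``since $aa^*=0$'': the symmetric element $(a+a^*)^{-1}$ sits between $a$ and $a^*$, and $aa^*=0$ gives no control over $a(a+a^*)^{-1}a^*$ without further commutation relations --- producing the right idempotent is precisely the nontrivial content of Yoshii's Lemma 4.5. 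Your fallback only rules out the case that $\aa$ is a domain; the hypothesis is that $J=H(\aa,*)$ is a Jordan domain, $*$-primeness of $\aa$ is not assumed in the proposition, so the opening reduction to ``$\aa$ a domain or $\bar\aa\cong D\oplus D^{\mathrm{op}}$'' is not available as stated. Moreover your preamble asserts that the split case yields the \emph{extension} type via Lemma \ref{4.9 yoshii}, while part (a) of your argument (correctly) says the split case yields the \emph{plus} type; these two passages contradict each other. Finally, the passage from $H(\bar\aa,*)\cong D$ over $\bar Z$ to a $G$-graded isomorphism $J\cong_G(\bbbf^t[G],\lam)^+$ is asserted, not proved.

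In (b) the skeleton (write $a^2=c\,y^2$, set $z=ay^{-1}$, branch on whether $c$ is a square, otherwise extend to $\bbbe=\bbbf(\sqrt c)$) matches Yoshii's argument, but the stated mechanism is wrong: you cannot conclude that ``$\aa$ is an associative $G$-torus, equivalently $J=\aa^+$,'' because $a^*=-a\neq 0$ forces $a\in\aa\setminus J$, so $J$ is a proper subspace of $\aa$ and can never equal $\aa^+$. What actually happens is that the normalized $z$ gives a decomposition $\aa=\mathcal B\oplus z\mathcal B$ (Peirce decomposition for the idempotents $\tfrac12(1\pm z)$), and $J$ is identified with $\mathcal B^+$ for that \emph{other} associative algebra $\mathcal B$, which is then recognized as a $G$-torus by Lemma \ref{tori-12}. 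You also explicitly defer the verification that the three hypotheses $a^2\in J_{2\gamma}$, $ay^{-1}a\in J_\gamma$, $[a,y]\in J_{2\gamma}$ make this decomposition graded; that verification \emph{is} the proof of (b). So as written this is a plausible outline containing two incorrect steps (the idempotent computation in (a) and ``$J=\aa^+$'' in (b)) rather than a complete argument.
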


\proof (a) By Lemma \ref{b3}(v), $J$ is domain. By \cite[Lemma
4.5]{Yo1}, $J\cong\aa^+$ for some associative algebra $\aa$.
Then by Lemma \ref{tori-12}, we are done. The proof of part (b) is
exactly the same as \cite[Proposition 4.7(b)]{Yo1}. \qed

\begin{DEF}\label{nolable}
{\rm
{A Jordan $G$-torus $J$  is
said to be of {\it involution type} if we have $J\cong_G
H((\bbbf^t[G],\lam),\theta_q),$ ($\lam$ a $2$-cocycle and $q$ a quadratic
map), it is said to be of {\it plus type} if
$J\cong_G(\bbbf^t[G],\lam)^+$ ($\lam$ a $2$-cocycle), finally it is
said to be of {\it extension type} if $J\cong_G
H((\bbbe^t[G],\lam),\sg),$ ($\bbbe$ a quadratic field extension of
$\bbbf,$ $\lam$ a $2$-cocycle and $\sg$ an involution). {If $G$ is free abelian of finite rank, we call
a Jordan $G$-torus of one of the above types, simply a Jordan torus of that type.}}}
\end{DEF}

\begin{lem}
\label{pre}
Suppose that $G$ is  a free abelian group of finite rank and $J=\op_{g\in G}J^g$ is a Jordan $G$-torus. Suppose that $P$ is an associative algebra with involution  $*$ and $J=H(P,*).$ For $g\in \supp(J),$ fix $0\neq x_g\in J^g.$ If for all $g,h\in \supp(J),$ $x^gx^h=\pm x^hx^g$ (product in $P$), then
one of the following occurs:

(a) $P$ is  isomorphic to $(\bbbf^t[G],\lam)$ for a 2-cocycle $\lam:G\times G\longrightarrow \bbbf;$ in particular, $P$ is a $G$-graded algebra. Moreover $*$ is a $G$-graded involution  and $J$ is graded isomorphic to $H((\bbbf^t[G],\lam),\theta_q),$ where   $q:G\times G\longrightarrow \bbbf_2$ is the quadratic map  arising from the involution on $(\bbbf^t[G],\lam)$ induced via the isomorphism form $P$ to $(\bbbf^t[G],\lam)$ (see Section \ref{associative-involution}); in particular, $P^g=J^g$ for all $g\in \supp(J).$

{(b) There are an invertible element $u$ of $ P$ and a nonzero element  $y$ of $ J^\gamma$ for some $\gamma\in G$ such that the following four conditions hold:
$$u^*=-u,\;u^2\in J^{2\gamma},\;uy^{-1}u\in J^\gamma,\;[u,y]\in J^{2\gamma}.$$}
\end{lem}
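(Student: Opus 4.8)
The plan is to exploit the hypothesis $x^g x^h = \pm x^h x^g$ to produce a sign map $\lam_t : \supp(J)\times\supp(J)\to\{\pm1\}$ and then bifurcate according to whether or not $\lam_t$ is trivial (i.e.\ all the signs are $+1$). First I would set $P_0$ to be the (associative) subalgebra of $P$ generated by $J$; since $J$ generates $P$ we have $P_0=P$. Using that the $x^g$ commute up to sign, every product $x^{g_1}\cdots x^{g_k}$ is, up to a sign, equal to $x^{g_1+\cdots+g_k}$ times a scalar coming from the (finitely many) structure constants, so $P=\sum_{g\in G}\bbbf\,w^g$ for suitable nonzero $w^g$; one checks $\dim P^g\le 1$ and, because the $x^g$ are invertible in $J$ hence in $P$, that $P$ is a $G$-graded associative $G$-torus in the sense of Definition~\ref{day-2}. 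By Lemma~\ref{tori-11}, $P\cong_G(\bbbf^t[G],\lam)$ for a $2$-cocycle $\lam$, and $\lam_t(g,h)=\lam(g,h)\lam(h,g)^{-1}\in\{\pm1\}$ is exactly the commutation sign; this records the ``elementary'' nature forced by the hypothesis. Note $\supp(J)$ generates $G$ by (T1), so the signs on $\supp(J)$ determine $\lam_t$ on all of $G$ since $\lam_t$ is a bihomomorphism (Section~\ref{associative}).

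Case (a): suppose $x^g x^h = x^h x^g$ for all $g,h\in\supp(J)$, i.e.\ $\lam_t\equiv 1$, so $P$ is commutative. Then I claim $J=H(P,*)$ forces the description in (a). Since $P$ is commutative and generated by $J=H(P,*)$, the set $H(P,*)$ is a subalgebra containing a generating set, and one shows $P=H(P,*)+ K(P,*)$ with $K(P,*)$ the skew part; a standard argument (as in \cite[Prop.~4.7]{Yo1}) using invertibility of homogeneous elements shows that $*$ must be $G$-graded, $\overline{w^g}=(-1)^{q(g)}w^g$ for a quadratic map $q:G\to\bbbf_2$ satisfying \eqref{gday5}, and consequently $J\cong_G H((\bbbf^t[G],\lam),\theta_q)$ with $P^g=J^g$ for all $g\in\supp(J)$. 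This is precisely alternative (a).

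Case (b): suppose there exist $g_0,h_0\in\supp(J)$ with $x^{g_0}x^{h_0}=-x^{h_0}x^{g_0}$, so $\lam_t\not\equiv 1$ and $P$ is noncommutative. I would now produce the element $u$ and the witness $y$ of alternative (b). The natural candidate is a commutator or an anticommutator of two of the $x^g$: since $x^{g_0}$ and $x^{h_0}$ anticommute, the element $v:=x^{g_0}x^{h_0}$ satisfies $v^2 = -x^{2g_0}x^{2h_0}$, is homogeneous (in $P$) of degree $g_0+h_0$, and is invertible; and $v^*$ is again homogeneous of the same degree, so $v^* = c\,v$ or, after adjusting, one extracts from $\{v, v^*\}$ a nonzero skew homogeneous invertible element $u$ (if $v+v^*$ were invertible in $J$ we would instead be in a situation handled by Proposition~\ref{jordan-14}(a), which lands us back in plus/involution type — so the interesting branch is $u^* = -u$). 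Then taking $y$ to be any nonzero homogeneous element of $J$ whose degree $\gamma$ is compatible — e.g.\ $y\in J^{g_0}$ or a suitable power — the four conditions $u^*=-u$, $u^2\in J^{2\gamma}$, $uy^{-1}u\in J^\gamma$, $[u,y]\in J^{2\gamma}$ follow from homogeneity and the relation $\deg u = 2\gamma$ together with $u$ skew; this is exactly the hypothesis list of Proposition~\ref{jordan-14}(b).

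The main obstacle I anticipate is the bookkeeping in Case (b): one must choose $u$ and $\gamma$ so that \emph{all four} degree/parity conditions hold simultaneously, and in particular arrange $\deg u=2\gamma$ with $\gamma\in\supp(J)$, which requires knowing that $\supp(J)$ — a PRS in $G$ by Lemma~\ref{b3}(iv) — contains elements in the right cosets; the divisibility facts from Lemma~\ref{b3}(vii) (if $x^m\in J^\sg$ then $\sg\in mG$ and $x\in J^{\sg/m}$) are what make this go through, and disentangling the sub-case where $v+v^*$ is invertible (which collapses to (a) via Proposition~\ref{jordan-14}(a)) from the genuinely Hermitian sub-case is the delicate point. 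Everything else is a direct translation of the finite-rank argument of \cite[Lemma~4.something]{Yo1}, now legitimate because $G$ is free abelian of finite rank by hypothesis.
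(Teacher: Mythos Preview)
Your bifurcation is the wrong one, and the gap is already in the first paragraph. You claim that from $x^gx^h=\pm x^hx^g$ one gets that every product $x^{g_1}\cdots x^{g_k}$ is a scalar multiple of $x^{g_1+\cdots+g_k}$, hence $P=\sum_{g\in G}\bbbf w^g$ is a $G$-torus. But the hypothesis gives only \emph{commutation} relations, not \emph{structure constants}: when $x^gx^h=-x^hx^g$ we have $x^g\circ x^h=0$, so there is no relation tying $x^gx^h$ to $x^{g+h}$, and indeed $x^gx^h$ is then skew while $x^{g+h}$ (if it exists) is symmetric. If instead you fix a basis $\sg_1,\dots,\sg_n\in\supp(J)$ and set $w^g:=x_{\sg_1}^{a_1}\cdots x_{\sg_n}^{a_n}$, then the subalgebra $P'$ generated by the $x_{\sg_i}^{\pm1}$ \emph{is} a $G$-torus, but there is no reason why an arbitrary $x^g$ (for $g\in\supp(J)$) lies in $P'$; equivalently, no reason why $P'=P$. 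This is exactly the obstruction that forces alternative (b).

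Consequently your case split on $\lam_t\equiv1$ versus $\lam_t\not\equiv1$ does not match the lemma's dichotomy: alternative (a) certainly allows $P$ noncommutative (the whole point of the involution type is a genuinely twisted $(\bbbf^t[G],\lam)$ with a graded involution $\theta_q$), so your ``Case (b)'' would be trying to manufacture the witnesses $u,y$ in situations where (a) already holds and no such witnesses need exist. The paper's (and \cite[Thm.~4.11]{Yo1}'s) dichotomy is different: one asks whether $J$ is spanned by the $r$-tads $\{x_{\sg_{i_1}}^{\ep_1},\dots,x_{\sg_{i_r}}^{\ep_r}\}$ built from a fixed basis $\{\sg_1,\dots,\sg_n\}\sub\supp(J)$. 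If yes, then every $x^g$ lies in $P'$, so $P'=P$ is the desired $G$-torus with graded involution, and (a) follows. If no, there is some $g\in\supp(J)$ with $w^g$ skew (so $w^g\notin J$) yet $x^g\in J^g$ nonzero; it is from \emph{this} mismatch that the skew invertible $u$ and the element $y\in J^\gamma$ satisfying the four conditions of (b) are extracted in \cite[Thm.~4.11]{Yo1}. Your commutator construction $v=x^{g_0}x^{h_0}$ does not address this, because it presupposes the grading on $P$ that you have not established.
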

\begin{proof}
One knows from Lemma \ref{b3} and {\cite[Proposition II.1.11]{AABGP}} that there is a basis $B=\{\sg_1,\ldots,\sg_n\}\sub\hbox{supp}(J)$ for $G.$ If  $J$ is generated by $r$-tads $\{x_{\sg_{i_1}}^{\ep_{1}}\cdots x_{\sg_{i_r}}^{\ep_{r}}\}$ for $r\in\bbbz^{>0},$ $1\leq i_1,\ldots,i_r\leq n,$ $\ep_1,\ldots,\ep_r\in\{\pm1\},$ the conditions (A) and (B) of the proof \cite[Thm. 4.11]{Yo1} hold and so by the proof of the same theorem all the statements in (a) are fulfilled.  But if   $J$ is not generated by $r$-tads $\{x_{\sg_{i_1}}^{\ep_{1}}\cdots x_{\sg_{i_r}}^{\ep_{r}}\}$ for $r\in\bbbz^{>0},$ $1\leq i_1,\ldots,i_r\leq n,$ $\ep_1,\ldots,\ep_r\in\{\pm1\},$ the condition (A) but not  (B) of the proof \cite[Thm. 4.11]{Yo1} holds and again by the proof of the same theorem there are  an invertible element $u$ of $ P$ and a nonzero element  $y$ of {$ J^\gamma$} for some $\gamma\in G$  such that $u^*=-u,\;u^2\in J^{2\gamma},\;uy^{-1}u\in J^\gamma,\;[u,y]\in J^{2\gamma}.$
\end{proof}

\begin{pro}\label{classification}
Suppose that $P$ is an associative algebra with an involution $*.$ Suppose that  $J:=H(P,*)$ generates $P.$ Suppose
that $\{G_i\mid i\in\i\}$ is a class of free abelian
subgroups of $G$ such that $G=\cup_{i\in \i} G_i.$ Also assume $J=\op_{g\in G}J^g$ is a Jordan $G$-torus. Set $$J_i:=\op_{g\in G_i}
J^g;\;\;(i\in\i).$$ Assume that $ *_i,$ the restriction of $*$ to the
subalgebra $P_i$ ($i\in\i$) of $P$ generated by $J_i$ is an
involution of $P_i$ and that $J_i=H(P_i,*).$ If $P$ is $*$-prime,
then one of the following holds for  $J$:
\begin{itemize}
\item{$J\simeq H((\bbbf^t[G],\lam),\theta_q)$, $\lam$ a $2$-cocycle and $q$ a quadratic map}
\item{$J\simeq(\bbbf^t[G],\lam)^+$, $\lam$ a $2$-cocycle}
\item{$J\simeq H((\bbbe^t[G],\lam),\sg)$, $\bbbe$ a quadratic field extension of $\bbbf,$ $\lam$ a $2$-cocycle and $\sg$ an involution.}
\end{itemize}
Moreover, if $J$ is of involution (resp. plus or extension) type,
it is a direct union of Jordan tori of involution (resp. plus or
extension) type.
\end{pro}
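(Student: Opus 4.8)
The plan is to classify $J$ by first reducing to free abelian groups of \emph{finite} rank, applying the available structure lemmas there, and then recovering $J$ from its finite-rank pieces by a direct-union argument. For the reduction, for each finite $\mfm\sub\supp(J)$ I would introduce the subgroup $G_\mfm:=\la\mfm\ra$, which is free abelian of finite rank since $G$ is torsion free, together with $J_\mfm:=\op_{g\in G_\mfm}J^g$ and the subalgebra $P_\mfm$ of $P$ generated by $J_\mfm$. By Lemma \ref{b3} the $G_\mfm$ are directed under inclusion with $G=\cup_\mfm G_\mfm$, and $J$ is the direct union of the $J_\mfm$; one checks, exactly as for the given family $\{G_i\}$, that $J_\mfm$ is a Jordan $G_\mfm$-torus, that $*$ restricts to an involution of $P_\mfm$ with $H(P_\mfm,*)=J_\mfm$, and --- since $J_\mfm$ is strongly prime (Lemma \ref{b3}(v)) and generates $P_\mfm$ --- that $P_\mfm$ is $*$-prime. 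So it suffices to classify each $J_\mfm$ and then verify coherence of the local types; thus I may and do assume in the next step that $G$ is free abelian of finite rank.

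For the finite-rank case I would follow the scheme behind \cite[Thm.~4.11]{Yo1}. If there is $a\in P$ with $aa^*=0$ and $a+a^*$ invertible in $J$, then Proposition \ref{jordan-14}(a) gives $J\cong_G(\bbbf^t[G],\lam)^+$, so $J$ is of plus type. Otherwise the aim is a dichotomy: either the homogeneous generators $x^g\in J^g$ can be chosen so that $x^gx^h=\pm x^hx^g$ in $P$ for all $g,h$, or else, after adjoining a suitable quadratic extension $\bbbe/\bbbf$, such an element $a$ appears in $\bbbe\otimes_\bbbf P$, giving $\bbbe\otimes_\bbbf J\cong_G(\bbbe^t[G],\lam)^+$. (It is worth noting that for homogeneous $x,y\in J=H(P,*)$ a relation $xy=qyx$ with $q\in\bbbf^\times$ automatically forces $q=\pm1$: if $q\neq-1$ then $xy\in\bbbf(x\circ y)\sub J$ is $*$-fixed, so $yx=(xy)^*=xy$; the genuine issue is whether $xy$ and $yx$ are $\bbbf$-proportional at all.) In the first alternative Lemma \ref{pre} applies: case (a) yields $J\cong_G H((\bbbf^t[G],\lam),\theta_q)$, of involution type; case (b) produces an invertible $u$ with $u^*=-u$ and $0\neq y\in J^\gamma$ meeting the hypotheses of Proposition \ref{jordan-14}(b), whence $J\cong_G(\bbbf^t[G],\lam)^+$ or $\bbbe\otimes_\bbbf J\cong_G(\bbbe^t[G],\lam)^+$. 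Whenever an isomorphism over a quadratic extension $\bbbe$ arises, Lemma \ref{4.9 yoshii} descends it to $J\cong_G(\bbbf^t[G],\mu)^+$ (plus type) or $J\cong_G H((\bbbe^t[G],\mu),\sg)$ (extension type). So in finite rank $J$ is of one of the three types. I expect the main obstacle to be precisely the dichotomy above --- showing that failure to choose the $x^g$ to $\pm$-commute in $P$ forces a suitable $a$ to exist over $\bbbf$ or over a quadratic extension; this is where the $*$-primeness of $P$ and the structure of the finite-dimensional central closure $\bar P$ enter.

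Finally I would globalize. Returning to an arbitrary torsion-free $G$, the previous step applies to each $J_\mfm$, so each $J_\mfm$ is of plus, involution, or extension type. If there is $a\in P$ with $aa^*=0$ and $a+a^*$ invertible in $J$, Proposition \ref{jordan-14}(a) gives $J\cong_G(\bbbf^t[G],\lam)^+$ directly, and restricting this graded isomorphism to each $J_\mfm$ (noting $\op_{g\in G_\mfm}\bbbf x^g$ is a $G_\mfm$-graded subalgebra) exhibits $J$ as a direct union of plus-type tori. Otherwise no $P_\mfm$ contains such an element, so --- since a noncommutative plus-type torus has an exchange envelope $\mathcal B\op\mathcal B^{op}$ containing $(1,0)$ --- every noncommutative $J_\mfm$ is of involution or of extension type (the commutative ones being absorbed into the involution case). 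If some $J_{\mfm_0}$ is of extension type, it contains homogeneous $x^g,x^h$ with $x^gx^h$ and $x^hx^g$ not $\bbbf$-proportional; these lie in all $J_\mfm$ with $\mfm\supseteq\mfm_0$, which therefore cannot be of plus type (as above) nor of involution type (whose homogeneous elements $\pm$-commute), hence are all of extension type with a common quadratic extension $\bbbe$; by Lemma \ref{direct-limit} and its involutorial analogue, with Remark \ref{gday+12} to make the cocycles compatible, the direct union $J$ of this cofinal subfamily is of extension type. If no $J_\mfm$ is of extension type, all are of involution type, and the same limit argument makes $J$ of involution type. In every case $J$ is of one of the three types and is a direct union of Jordan tori of that same type, as required.
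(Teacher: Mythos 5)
Your finite-rank analysis is essentially the paper's: the dichotomy between $\pm$-commuting homogeneous generators and the existence of a witness $a$ with $aa^*=0$ and $a+a^*$ invertible is exactly Lemma \ref{pre} combined with Proposition \ref{jordan-14}, and that part is fine. The genuine gap is in your globalization step. You classify each finite-rank piece $J_\mfm$ abstractly and then try to assemble the type of $J$ from the types of the pieces, invoking ``Lemma \ref{direct-limit} and its involutorial analogue, with Remark \ref{gday+12} to make the cocycles compatible.'' This is precisely the hard point, and as written it does not go through: for the involution and extension types you have only abstract isomorphisms $J_\mfm\cong H((\bbbf^t[G_\mfm],\lam_\mfm),\theta_{q_\mfm})$ (resp.\ $H((\bbbe_\mfm^t[G_\mfm],\lam_\mfm),\sg_\mfm)$), and nothing in your argument shows that these can be chosen compatibly along the directed system, that the quadratic extensions $\bbbe_\mfm$ all coincide, or that the homogeneous components of the envelopes $P_\mfm$ match up inside $P$. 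Lemma \ref{direct-limit} moreover requires $\dim((\aa_\gamma)^g)\leq 1$, which fails for the extension type (the homogeneous pieces are $\bbbe$-lines, hence $2$-dimensional over $\bbbf$), so the ``involutorial analogue'' you appeal to is not in the paper and would itself have to be proved.

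The paper circumvents all of this by making the dichotomy global rather than piecewise. It fixes $x_g\in J^g$ for $g\in\supp(J)$ once and for all and splits on whether $x_gx_h=\pm x_hx_g$ holds for all $g,h\in\supp(J)$. In the commuting case, when every $P_i$ becomes a graded torus via Lemma \ref{pre}(a), it shows that the homogeneous components satisfy $P_i^g=P_j^g$ literally as subspaces of the ambient algebra $P$ (using that $P_i$ is generated by the one-dimensional spaces $J_i^g$), so Lemma \ref{direct-limit} applies to $P$ itself and $J\cong H((\bbbf^t[G],\lam),\theta_q)$ with no compatibility issue to resolve. In the non-commuting case it uses the $*$-primeness of $P$ to produce a single global witness: either a homogeneous $c$ with $cc^*=0$ and $c+c^*$ invertible (when $[x_g,x_h]^2=0$), or the elements $u=[x_g,x_h]$ and $d=x_g\circ x_h$ satisfying the hypotheses of Proposition \ref{jordan-14}(b). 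Then Proposition \ref{jordan-14} and Lemma \ref{4.9 yoshii} apply to $J$ over the full group $G$ directly, and the ``direct union of tori of the same type'' assertion follows because the same witness lies in every $P_i$ with $i$ sufficiently large in the directed order. To repair your outline, replace the ``glue the abstract isomorphisms'' step by this global-witness argument.
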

\begin{proof}
We know that  $J=\op_{g\in G}J^g$ is a Jordan $G$-torus. For $g\in \hbox{supp}(J),$ we fix $0\neq x_g\in J^g.$
 We consider the following two cases:

\underline{\textbf{Case 1}: For all $g,h\in \supp(J),$ $x_gx_h=\pm x_hx_g.$}
 By Lemma \ref{pre}, one of the following occurs:

(a) for all $i\in \i,$
 \begin{itemize}
\item  $P_i$ is equipped with a $G_i$-grading $\bigoplus_{g\in G}P_i^g$ with $P_i^g=J_i^g,$ for all  $g\in \supp(J_i),$
\item $P_i=\bigoplus_{g\in G_i}P_i^g$ is an associative $G_i$-torus,
\item $*_i=*\mid_{P_i}$ is a $G_i$-graded involution,
\end{itemize}

(b) there is $i\in\i$ for which there are an invertible element $u$ of $ P_i$ and a nonzero element  $y$ of $ J_i^\gamma$ for some $\gamma\in G_i$ such that the following four conditions hold:
$$u^*=-u,\;u^2\in J^{2\gamma},\;uy^{-1}u\in J^\gamma,\;[u,y]\in J^{2\gamma}.$$

We now assume (a) is satisfied, $i,j\in \i$ with $i\preccurlyeq j$ and $g\in G_i.$ If $g\in \supp(J_i),$ then $P_i^g=J_i^g=J_j^g=P_j^g.$ Also we know that $P_i$ is generated by $J_i$ and so $P_i$ is generated by $\cup_{g\in G_i} J_i^g=\cup_{g\in \supp(J_i)}J_i^g,$ in particular, for $g\in G_i,$ there are $\tau_1,\ldots,\tau_t\in \supp(J_i)$ such that $g=\tau_1+\cdots+\tau_t$ and
\begin{eqnarray*}
P_i^g=J_i^{\tau_1}\cdots J_i^{\tau_t}&=&P_i^{\tau_1}\cdots P_i^{\tau_t}\\
&=&P_j^{\tau_1}\cdots P_j^{\tau_t}\sub P_j^g.
\end{eqnarray*}
Therefor we have proved
$$P_i^g=P_j^g;\;\;\; i,j\in\i,\;i\preccurlyeq j,\;    g\in G_i.$$
So by Lemma \ref{direct-limit}, $P$ is an associative  $G$-torus with $P^g=J^g$ for all $g\in \supp(J).$
{Therefore} $J$ is  graded isomorphic {to} $H((\bbbf^t[G],\lam), \theta_q)$ {for a
$2$-cocycle $\lam:G\times G\longrightarrow\bbbf^\times$ and a quadratic map $q:G\times G\longrightarrow \bbbf_2.$}

Next we assume (b) is satisfied. Then we are done by Lemma \ref{jordan-14}.

\underline{\textbf{Case 2}: There are $g,h\in \supp(J)$ such that  $x_gx_h\neq\pm x_hx_g.$}
Set
$u:=[x_g,x_h]\neq 0$ and $d:=x_g\circ x_h\neq 0.$ We have one of the following conditions:

\begin{itemize}
\item $u^2=0:$ We have $u=-u^*$ and so $uu^*=0,$ then there
    exists $y\in J$ such that for $v:=yu,$ $v+v^*\neq 0.$
    Otherwise, for all $y\in J,$ we have $v+v^*=0$ in which
    $v:=yu.$ So we have $yu=v=-v^*=-u^*y^*=uy.$ Therefore for $w\in P,$ we have
    $(uy)(uw)=u(yu)w=u(uy)w=u^2w=0.$ This implies that
    $(uJ)(uP)=\{0\}.$ Now as $J$ generates $P,$ we get
    $(uP)^2=\{0\}.$ So we have $(PuP)^2=PuPPuP\sub
    PuPuP=P(uP)^2=\{0\}.$ Also $(PuP)^*=PuP$ and so $PuP$ is a
    nonzero $*$-ideal of $P$ with $(PuP)^2=\{0\},$ a contradiction, as $P$ is $*$-prime. Therefore
    there exists $y\in J$ such that for $v:=yu,$ $v+v^*\neq 0.$
    Since $y\in J,$ we have $y=\sum_{g\in G}y_g$ $(y_g\in J_g).$
    For $g\in G,$ set $v_g:=y_gu.$ If for all  $g\in G,$
    $v_g+v_g^*=0,$ we get $$v+v^*=[y,u]=\bigoplus_{g\in
    G}[y_g,u]=\bigoplus_{g\in G}(y_gu-uy_g)=\bigoplus_{g\in G}v_g+v_g^*=0$$ which is a
    contradiction. So there is $g_*\in G$ with
    $v_{g_*}+v_{g_*}^*\neq0.$ Now as $v_{g_*}+v_{g_*}^*$ is a
    homogeneous element of $J$ (see \cite[(2.7)]{Yo1}), it is invertible. Also
    $v_{g_*}v_{g_*}^*=0.$ So setting $c:=v_{g_*}\in  P,$ we get
    that $cc^*=0$ and $c+c^*$ is invertible in $J.$ Now fix
    $r_*\in\i$ with $y\in J_{r_*}$ and $u\in P_{r_*}.$ Now  for all $i\in \i$ with
    $r_*\preccurlyeq i,$ we have $c\in J_{r_*},$ $cc^*=0$ and $c+c^*$ is
    invertible in $J_{r_*}.$  We know that $P=\cup_{i\in
    \i_*}P_i$ in which $\i_*=\{i\in\i\mid r_*\preccurlyeq i\}$
    and $J=\cup_{ i\in\i_*}J_i.$ So $J$ is the direct union of
    Jordan tori $J_i$'s each of which contains the  element $c.$
    Since $c+c^*$ is invertible in $J_{r_*},$ this is invertible
    in each $J_i$ ($i\in\i_*$).  Now as $cc^*=0,$ we get that $J$
    as well as  each $J_i,$ $i\in\i,$ is of {plus type} by
    Proposition \ref{jordan-14}.


\item  $u^2\neq 0.$ We have  $d=x_g\circ x_h \in J^{g+h}.$ So there is $j_*\in \i$ with $d\in J_{i_*}^{g+h}.$ Now we have

$u^2\in(J_{i_*})_{2\gamma}$

$ud^{-1}u\in(J_{i_*})_{\gamma}$

$[u,d]\in (J_{i_*})_{2\gamma}$
(see \cite[Page 24]{Yo1}).

\noindent Then $P$ is the direct union of $P_i$'s ($i\in \i_*$)      where
$\i_*:=\{i\in\i\mid i_*\preccurlyeq i\}$        and $J$ is the
direct union of $J_i$'s for $i\in\i_*.$   Now using the proof of
\cite[Pro. 4.7]{Yo1} together with \cite[Pro. 4.9]{Yo1} either
each $J_i$  ($i\in\i_*$) is of {plus type}  or each $J_i$
($i\in\i_*$) is of {extension type}. Moreover  by Lemmas
\ref{jordan-14} and \ref{4.9 yoshii}, either $J$ {is} of {plus type}  or
of {extension type} respectively.
\end{itemize}
\end{proof}

 \begin{thm}\label{92-1}
 {Suppose that $J$ is a Jordan $G$-torus of Hermitian type, then  $J$ is a direct union of Jordan tori of Hermitian type and it is of one of involution, plus or extension types. Moreover if $J$ is of involution (resp. plus or extension) type, it is a direct union of Jordan tori of involution (resp. plus or extension) type.}
 \end{thm}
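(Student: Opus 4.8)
The plan is to deduce Theorem \ref{92-1} from Proposition \ref{classification} by manufacturing the hypotheses of that proposition out of nothing but the assumption that $J$ is a Jordan $G$-torus of Hermitian type. First I would invoke Lemma \ref{q48}: since $J$ is of Hermitian type, $J = H(P,*)$ for an associative algebra $P$ with involution $*$ such that $P$ is $*$-prime and $P$ is generated by $J$. This gives the ambient involutorial pair; the remaining work is to realize it as a direct union of subpairs indexed by free abelian subgroups so that Proposition \ref{classification} applies verbatim.

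Next I would set up the directed family of subgroups. By Lemma \ref{b3}(iv) the support $\supp(J)$ is a PRS in $G$, and by Lemma \ref{b2}(b) it is the direct union of its finitely generated pointed reflection subspaces $S_{\mathfrak m} = \supp(J)\cap\la\mathfrak m\ra$ for $\mathfrak m\in\mathcal M_{\supp(J)}$; since $G$ is torsion free, each $\la\mathfrak m\ra$ is a free abelian subgroup of finite rank. Set $G_{\mathfrak m}:=\la\mathfrak m\ra$ and $J_{\mathfrak m}:=\bigoplus_{g\in G_{\mathfrak m}}J^g$; by Lemma \ref{b3}(ii) each $J_{\mathfrak m}$ is a $G_{\mathfrak m}$-torus (in particular a Jordan torus in the finite-rank sense), and $J$ is the direct union of the $J_{\mathfrak m}$, with $G = \cup_{\mathfrak m}G_{\mathfrak m}$. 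Let $P_{\mathfrak m}$ be the subalgebra of $P$ generated by $J_{\mathfrak m}$. The crucial technical point is to check that $*$ restricts to an involution $*_{\mathfrak m}$ of $P_{\mathfrak m}$ with $H(P_{\mathfrak m},*_{\mathfrak m}) = J_{\mathfrak m}$: the first part is automatic since $*$ fixes $J_{\mathfrak m}$ pointwise and hence preserves the subalgebra it generates, and for the second one argues that an element of $P_{\mathfrak m}$ fixed by $*$, being a linear combination of homogeneous elements of $P$ lying in $G_{\mathfrak m}$ and fixed by $*$, already lies in $H(P,*)\cap J_{\mathfrak m}$-span, i.e. in $J_{\mathfrak m}$; here one uses that $J = H(P,*)$ and the grading of $P$ inherited from (or compatible with) the grading of $J$.

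Having verified these hypotheses, Proposition \ref{classification} applies directly and yields that $J$ is graded-isomorphic to one of $H((\bbbf^t[G],\lam),\theta_q)$, $(\bbbf^t[G],\lam)^+$, or $H((\bbbe^t[G],\lam),\sg)$, i.e. $J$ is of involution, plus, or extension type, and moreover in each case $J$ is the direct union of Jordan tori of the same type (these are the $J_{\mathfrak m}$, or a cofinal subfamily thereof, as produced inside the proof of Proposition \ref{classification}). It only remains to observe that each such $J_{\mathfrak m}$, being of one of these three types, is itself a Hermitian torus: in each case $J_{\mathfrak m}$ is $H$ of a $*$-prime (indeed $*$-simple or simple, hence $*$-prime) involutorial associative algebra generated by $J_{\mathfrak m}$ — for the plus type one uses $(\bbbf^t[G_{\mathfrak m}],\lam)\oplus(\bbbf^t[G_{\mathfrak m}],\lam)^{op}$ with the exchange involution — so $J$ is a direct union of Jordan tori of Hermitian type, completing the proof.

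The main obstacle I anticipate is the second bullet of the compatibility check, namely verifying $H(P_{\mathfrak m},*_{\mathfrak m}) = J_{\mathfrak m}$ rather than merely $\supseteq$ or $\subseteq$ one way. One cannot take for granted that $P$ carries a $G$-grading restricting well to $P_{\mathfrak m}$; $P$ is only known to be $*$-prime and generated by the graded $J$. The honest route is: $J_{\mathfrak m}$ generates $P_{\mathfrak m}$ by products of homogeneous elements of $J$ with degrees in $G_{\mathfrak m}$, so $P_{\mathfrak m}$ is spanned by homogeneous elements whose degrees lie in $G_{\mathfrak m}$ (using that $P$ acquires a $G$-grading exactly in the situations analyzed in the proof of Proposition \ref{classification}, or, more robustly, arguing at the level of the direct union without asserting a global grading on $P$), and then $H(P_{\mathfrak m},*) = P_{\mathfrak m}\cap H(P,*) = P_{\mathfrak m}\cap J = J_{\mathfrak m}$. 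Making this last chain of equalities rigorous in the possibly-ungraded $P$ is the delicate step; everything else is bookkeeping with direct unions, already packaged in Lemmas \ref{b2}, \ref{b3}, \ref{direct-limit} and Proposition \ref{classification}.
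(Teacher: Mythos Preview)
Your overall architecture is right --- invoke Lemma~\ref{q48} to get $J=H(P,*)$ with $P$ $*$-prime and generated by $J$, build a directed family of finitely generated subgroups, and feed everything into Proposition~\ref{classification}. But you have missed the one idea that makes the verification of the hypotheses of Proposition~\ref{classification} go through, and you correctly flag the resulting gap yourself without closing it.

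The paper does not index over \emph{all} finite subsets of $\supp(J)$. Since $q_{48}(J)\neq 0$, it first fixes witnesses $x_1,\dots,x_{12}\in J$ with $q_{48}(x_1,\dots,x_{12})\neq 0$, chooses $\sg_1,\dots,\sg_n\in G$ with each $x_k\in J^{\sg_1}\oplus\cdots\oplus J^{\sg_n}$, and then indexes only over finite $T\subseteq\supp(J)$ containing $\{\sg_1,\dots,\sg_n\}$. This single move buys two things at once. First, $q_{48}(J_T)\neq 0$ for every $T$, so each $J_T$ is already a Jordan torus \emph{of Hermitian type}; this is exactly the first assertion of the theorem, obtained directly rather than post hoc. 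Second, because $Q_{48}(J_T)=J_T$, the Special Hermitian Structure Theorem applies to $J_T$ inside the ambient $*$-envelope and gives $J_T=H(\pp_T,*)$ on the nose, where $\pp_T$ is the associative subalgebra generated by $J_T$. That is precisely the equality you could not justify: no $G$-grading on $P$ is needed, and the chain $H(P_{\mathfrak m},*)=P_{\mathfrak m}\cap J=J_{\mathfrak m}$ that you propose (and rightly distrust) is avoided entirely.

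Your alternative route --- deduce the three-type classification first and then argue that each piece, being of involution/plus/extension type, is a ``Hermitian torus'' --- does not establish what the theorem asks. ``Hermitian type'' in this paper means $q_{48}\neq 0$, not merely $J=H(\aa,*)$ for some $*$-prime $\aa$; a commutative associative torus is trivially of plus type yet has $q_{48}=0$. More seriously, without $J_{\mathfrak m}=H(P_{\mathfrak m},*)$ you cannot even invoke Proposition~\ref{classification}, so the argument is circular. The fix is simply to anchor your directed set at a fixed $\mathfrak m_0$ carrying the $q_{48}$-witnesses, exactly as Lemma~\ref{b3} already allows; everything else in your outline then works.
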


\begin{proof}
The group $G$ is a torsion free abelian group and  $J=\op_{g\in
G}J^g$ is a Jordan  $G$-torus of Hermitian type. Take $S$ to be the
support of $J.$ Since $J$ is of Hermitian type, $q_{48}(J)\neq0.$ Fix
$x_1,\ldots,x_{12}\in J$ such that $q_{48}(x_1,\ldots,x_{12})\neq0.$
Since $J=\op_{\sg\in G}J^\sg,$ there are $\sg_1,\ldots,\sg_n\in G$
such that $x_1,\ldots,x_{12}\in J^{\sg_1}\op\cdots\op J^{\sg_n}.$ Now
suppose $\mathcal{I}:=\{T\sub S\mid \sg_1,\ldots,\sg_n\in T,
|T|<\infty\}.$ Set $$G_{_T}:=\la T\ra\andd S_{_T}:=S\cap G_{_T}\;\;
(T\in\mathcal{I}).$$ Then $S=\cup_{T\in\mathcal{I}} S_{_T}$ and
$G_{_T}=\la S_{_T}\ra.$ Next set $$J_{_T}:=\op_{\sg\in G_{_T}}J^\sg
\;\;(T\in \mathcal{I}).$$ One has $J=\cup_{T\in\mathcal{I}} J_{_T}$
and that each $J_{_T}$ is a Jordan $G_T$-torus. Since $x_1,\ldots,x_{12}\in
J_{_T}$ for all $T\in\mathcal{I},$ we get that $q_{48}(J_{_T})\neq0$
and so $J_{_T}$ is of Hermitian type. So $J$ is a direct union of
Jordan tori of Hermitian type.

We know that $J$ is special, so there is  an $*$-envelope  $\aa$ of
$J$ satisfying the following condition:

\parbox{5in}{\begin{center}   $J\sub H(\aa,*)$ and  if $I$ is an $*$-ideal of $A,$ then $I\cap J\ne \{0\}.$ \end{center}}

\noindent (see \cite{MZ}).  Also by Special Hermitian Structure
Theorem and Lemma \ref{q48}, the associative  subalgebra $P$ of $\aa$
generated by $J$ is $*$-prime and $J=H(P,*).$  Now if  for $T\in\i,$
$\pp_{_T}$ {is the associative subalgebra of $\aa$ generated by $J_{_T},$
we have}
$J_{_T}=H(\pp_{_T},*).$ We also have $\pp=\cup_{T\in \i}\pp_{_T}.$

We next note that for $T\in\i,$ $G_{_T}$ is a finitely generated
torsion free abelian group and so it is a free abelian group of
finite rank. Now we get the result using Proposition
\ref{classification}.
\end{proof}

\section{\bf {Jordan tori of Clifford type}}\label{clifford-type}\setcounter{equation}{0}
Let $R$ be a unital commutative associative ring and $V$ be an
$R$-module. Let $\fm:V\times V\longrightarrow R$ be a symmetric $R$-
bilinear form. Define a linear $R$-algebra structure on $J:=R 1\oplus
V$ by having $1$ as the identity element and requiring $v\cdot
w=(v,w)1$, for $v,w\in V$. Then $J$ is a Jordan algebra called the
{\it Jordan algebra of the bilinear form $\fm$} or a {\it Jordan spin
factor}. {We recall that a Jordan algebra is called of Clifford type if its central closure
is a Jordan algebra of a symmetric bilinear form.}

{The following example is a generalization of a Clifford torus
appeared in {\cite[Theorem, III.2.9]{AABGP}} as the coordinate algebra of an
extended affine Lie algebra of type $A_1$. The setting is based on
\cite[Example 5.2]{Yo1} and {\cite{Yo2}.}}

{\begin{DEF}\label{exanew}
{\em
Let $G$ be an abelian group, $S$ a pointed reflection subspace of $G$ and $\Gamma$ a subgroup of $G$ such that
\begin{equation}\label{ipm1} 2G\sub\Gamma\subsetneq S\sub G\andd
S+\Gamma=S. \end{equation}
Let $I$ be a set of coset
representatives for $\{\sg+\Gamma\mid \sg\in
S\}\setminus\{\Gamma\}$. Then for a collection $\{a_\ep\}_{\ep\in I}$, $a_\ep\in\bbbf^\times$, we call the triple $(S,\Gamma,\{a_\ep\})$ a {\it Clifford triple}.
}
\end{DEF}}

\begin{exa}\label{clifford2}
\em{Let $G$ be an abelian group, not necessarily torsion free, and let
$(S,\Gamma,\{a_\ep\})$ be a Clifford triple.
Let $Z$ be the group algebra of
$\Gamma$ over $\bbbf$, namely $Z:=\bbbf[\Gamma]=\bigoplus_{\gamma\in
\Gamma}\bbbf z^\gamma$ with $z^\gamma z^\tau=z^{\gamma+\tau}$, for
$\gamma,\tau\in\Gamma$.
 Let $V$ be a free $Z$-module
with basis $\{t_\ep\}_{\ep\in I}$. Define a $Z$-bilinear form
$f:V\times V\longrightarrow Z$ by $Z$-linear extension of
\begin{equation}\label{ipm5}
f(t_\ep,t_\eta)=\left\{\begin{array}{ll}
a_\ep z^{2\ep}&\hbox{if }\ep=\eta,\\
0&\hbox{otherwise,}\end{array}\right.
\end{equation}
for all $\ep,\eta\in I$ (here we
note that $2\ep\in\Gamma$ by (\ref{ipm1})). Let
$$J:=J(S,\Gamma,\{a_\ep\}_{\ep\in I})=Z\oplus V$$
be the Jordan algebra over $Z$ of $f$. We note that
$V=\bigoplus_{\ep\in I}Zt_\ep=\bigoplus_{\ep\in
I,\gamma\in\Gamma}\bbbf z^\gamma t_\ep$.
We also note that for $\sg\in S$, there exists a unique $\ep_\sg\in
I\cup\{0\}$ such that $\sg-\ep_\sg\in\Gamma$. Set $t_0:=1\in J$. Now
for $\sg\in G$, we set
$$J_\sg:=\left\{\begin{array}{ll}\bbbf z^{\sg-\ep_\sg}t_{\ep_\sg}&\hbox{if }\sg\in S,\\
0&\hbox{otherwise}.
\end{array}\right.
$$
Then $J=\bigoplus_{\sg\in G}J_\sg$ and $\supp(J)=S$.

We next show that $J$ is $G$-graded. Let
$\sg,\tau\in S$. If $\ep_\sg=\ep_\tau=0$, then
$$J_\sg J_\tau=\bbbf z^\sg z^\tau=\bbbf z^{\sg+\tau}=J_{\sg+\tau}.$$
If $\ep_\sg=0$ and $\ep_\tau\not=0$, then
$$J_\sg J_\tau=\bbbf z^\sg z^{\tau-\ep_\tau}t_{\ep_\tau}=\bbbf
z^{\sg+\tau-\ep_\tau}t_{\ep_\tau}=J_{\sg+\tau}.$$ Finally, suppose
$\ep_\sg\not=0$ and $\ep_\tau\not=0$. We note that if
$\ep_\sg=\ep_\tau$, then {$\sg+\tau\in\Gamma\sub S$} and $J_{\sg+\tau}=\bbbf
z^{\sg+\tau}$. Then
\begin{eqnarray*}
J_\sg J_\tau=\bbbf
z^{\sg-\ep_\sg}t_{\ep_\sg}z^{\tau-\ep_\tau}t_{\ep_\tau}&=&\bbbf
z^{\sg+\tau-\ep_\sg-\ep_\tau}f(t_{\ep_\sg},t_{\ep_\tau})z^{2\ep_\sg}\\
&=&\left\{\begin{array}{ll}
\bbbf z^{\sg+\tau}&\hbox{if }\ep_\sg=\ep_\tau\\
0&\hbox{otherwise}.\end{array}\right.
\end{eqnarray*}
So $J_\sg J_\tau=
J_{\sg+\tau}$ if $\ep_\sg=\ep_\tau$, and $J_\sg J_\tau=\{0\}$
otherwise. This completes the proof that $J$ is a $G$-graded Jordan
algebra {over $Z$}. Thus $J$ is a Jordan $G$-torus with $Z(J)=Z$.
If $G$ is torsion free, then we can consider the central closure $\bar{J}$ of $J$.
If $\bar{V}:=\bar{Z}\otimes_Z V$, then $\bar{J}$ can be identified with $\bar{Z}\oplus\bar{V}$. Extending $f$ to $\bar{f}:\bar{V}\times\bar{V}\longrightarrow\bar{Z}$
by {$\bar{f}(\a\otimes v,\b\otimes w):=\a\b f(v,w)$}, one can see that $\bar{J}$ is the Jordan
algebra of the extended bilinear form $\bar f$. {Hence $J$ is a of Clifford type, which we call it the {\it Clifford $G$-torus}
associated to the Clifford triple $(S,\Gamma,\{a_{\ep}\})$.}
}
\qedexa
\end{exa}

{
\begin{thm}\label{clif}
{Let $G$ be a torsion free abelian group and $J$ be a Jordan $G$-torus of Clifford type with
support $S$ and central grading group $\Gamma$.
Let $I$ be a set of coset
representatives for $\{\sg+\Gamma\mid \sg\in
S\}\setminus\{\Gamma\}$. Then for each $\ep\in I$, there exists $a_\ep\in\bbbf^\times$ such that
$(S,\Gamma,\{a_\ep\})$ is a Clifford triple and $J$ is graded isomorphic to the Clifford $G$-torus $J(S,\Gamma,\{a_\ep\}_{\ep\in I})$ associated
to the Clifford triple $(S,\Gamma,\{a_\ep\})$.}
\end{thm}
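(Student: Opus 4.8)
The plan is to use the Clifford-type hypothesis together with the $G$-grading to show that $J$ already has, \emph{over its own centre}, the shape of a Jordan algebra of a symmetric bilinear form, and then to read off the triple $(S,\Gamma,\{a_\ep\})$ and match $J$ with the algebra of Example~\ref{clifford2}. For notation: by Lemma~\ref{b3} the centre $Z:=Z(J)$ is a commutative associative homogeneous subalgebra with $\supp(Z)=\Gamma$, a subgroup of $G$; since $\Gamma$ is torsion free one identifies $Z$, graded-isomorphically, with the group algebra $\bbbf[\Gamma]=\bigop_{\gamma\in\Gamma}\bbbf z^\gamma$. As $\dim_\bbbf Z_\gamma=\dim_\bbbf J_\gamma=1$ for $\gamma\in\Gamma$, we get $J_\gamma=Z_\gamma$ for all such $\gamma$; in particular $\Gamma\sub S$ and $\bigop_{\gamma\in\Gamma}J_\gamma=Z$. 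Passing to the central closure $\bar J=\bar Z\otimes_Z J$ — by Lemma~\ref{ipm6} a $G/\Gamma$-graded torus over $\bar Z$ with $J\sub\bar J$ and $\dim_{\bar Z}\bar J=|S/\Gamma|$ — the degree-$\bar 0$ part is $\bar J_{\bar 0}=\bar Z\otimes_Z Z=\bar Z1$. Being of Clifford type, $\bar J=\bar Z1\op\bar V$ is a (non-associative) Jordan algebra of a symmetric $\bar Z$-bilinear form $\bar f$ on $\bar V$, so $v\cdot w=\bar f(v,w)1$ for $v,w\in\bar V$, and $\dim_{\bar Z}\bar J\geq 3$ (such an algebra of dimension $\leq2$ being associative); hence $|S/\Gamma|\geq 3$ and $\Gamma\subsetneq S$.

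The crucial observation is that for $y=r1+v\in\bar J$ (with $r\in\bar Z$, $v\in\bar V$) one has $y^2-2ry=(\bar f(v,v)-r^2)1\in\bar Z1=\bar J_{\bar 0}$. Now fix $\sg\in S$ with $\bar\sg\ne\bar 0$, pick $0\ne x\in J_\sg$, and let $y\in\bar J_{\bar\sg}$ be its image. Then $y^2\in\bar J_{\overline{2\sg}}$ and $2ry\in\bar J_{\bar\sg}$, while $\bar\sg\notin\{\bar 0,\overline{2\sg}\}$ because $\bar\sg\ne\bar0$; comparing degree-$\bar\sg$ components in $y^2=2ry+(y^2-2ry)$ forces $2ry=0$, whence $r=0$. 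Thus $y=v\in\bar V$ and $y^2=\bar f(v,v)1\in\bar Z1$; since $J$ is a domain (Lemma~\ref{b3}(v)) $y^2\ne0$, so $\overline{2\sg}=\bar 0$, i.e.\ $2\sg\in\Gamma$. This is trivial for $\sg\in\Gamma$, so $2\sg\in\Gamma$ for all $\sg\in S$, and $\la S\ra=G$ yields $2G\sub\Gamma$. Multiplying $J_\sg$ by the invertible element $z^\gamma$ gives $Z_\gamma J_\sg=J_{\gamma+\sg}$ for $\gamma\in\Gamma$, $\sg\in S$, hence $S+\Gamma=S$. So $(S,\Gamma)$ satisfies \eqref{ipm1}. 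Moreover $2\sg\in\Gamma$ gives $-\bar\sg=\bar\sg$ for every $\sg\in S$, and the argument above shows that the image in $\bar J$ of any homogeneous element of $J$ of degree not in $\Gamma$ lies in $\bar V$.

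Now the isomorphism. Let $I$ be as in the statement; each $\ep\in I$ lies in $S$ (by $S+\Gamma=S$), so we may fix $0\ne t_\ep\in J_\ep$, and we set $t_0:=1$. Invertibility of $z^\gamma$ gives $\bigop_{\gamma\in\Gamma}J_{\gamma+\ep}=Zt_\ep$, a free rank-one $Z$-module, so $J=Z1\op\bigop_{\ep\in I}Zt_\ep$. For $\ep\ne\eta$ in $I$ the images of $t_\ep,t_\eta$ lie in $\bar V$, so $t_\ep t_\eta$ maps into $\bar V\cdot\bar V\sub\bar Z1=\bar J_{\bar 0}$ while lying in $\bar J_{\overline{\ep+\eta}}$, and $\overline{\ep+\eta}\ne\bar 0$ (else $\bar\eta=-\bar\ep=\bar\ep$, impossible as $I$ is a system of coset representatives); hence $t_\ep t_\eta=0$. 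Finally $t_\ep^2\in J_{2\ep}$, and $2\ep\in\Gamma$ gives $J_{2\ep}=Z_{2\ep}=\bbbf z^{2\ep}$ with $t_\ep^2\ne0$; write $t_\ep^2=a_\ep z^{2\ep}$ with $a_\ep\in\bbbf^\times$. Consequently, with $V:=\bigop_{\ep\in I}Zt_\ep$ and $f\colon V\times V\to Z$ the $Z$-bilinear extension of $f(t_\ep,t_\eta)=\d_{\ep\eta}a_\ep z^{2\ep}$, the algebra $J=Z1\op V$ is precisely the Jordan algebra over $Z=\bbbf[\Gamma]$ of the form $f$; that is, $J$ equals the Clifford $G$-torus $J(S,\Gamma,\{a_\ep\}_{\ep\in I})$ of Example~\ref{clifford2}, and the identification is $G$-graded since it matches $J_\sg=\bbbf z^{\sg-\ep_\sg}t_{\ep_\sg}$ with the homogeneous components of the construction. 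In particular $(S,\Gamma,\{a_\ep\})$ is a Clifford triple, completing the proof.

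The main obstacle is the passage between $J$ and its central closure: the Clifford hypothesis describes only $\bar J$, and one must exploit the $G$-grading — through the identity $y^2-2ry\in\bar Z1$ and its consequences $2\sg\in\Gamma$ and $\bar J_{\bar\sg}\bar J_{\bar\tau}\sub\bar Z1$ for $\bar\sg,\bar\tau\ne\bar 0$ — to conclude that $J$ itself, not merely $\bar J$, splits as $Z1\op V$ with a spin product. A secondary point requiring care is the identification $Z\cong_\Gamma\bbbf[\Gamma]$ used at the outset: this is automatic when $G$ has finite rank (then $\Gamma$ is free abelian) and in general is handled by the direct-union device of Lemma~\ref{b3}, much as in the proof of Theorem~\ref{92-1}.
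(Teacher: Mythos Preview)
Your proposal is correct and follows essentially the same route as the paper: pass to the central closure, use the spin-factor structure there to show that the homogeneous pieces of $J$ outside $\Gamma$ have trace zero and square into $Z$ (whence $2G\sub\Gamma$ and $S+\Gamma=S$), decompose $J=Z\oplus V$ with $V=\bigoplus_{\ep\in I}Zt_\ep$, and compute $t_\ep t_\eta$. The only cosmetic difference is that where the paper invokes the generic trace and cites \cite[Claim~1]{Yo1} for the key facts $\tr(\bar J_{\bar\sg})=0$ ($\bar\sg\ne\bar0$) and $2G\sub\Gamma$, you derive them directly from the identity $y^2-2ry\in\bar Z1$; this is precisely the content of that citation, so the arguments are the same in substance.
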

}
\proof
 By assumption, the central closure
$\bar J=\bar{Z}\otimes_Z J$ is a Jordan algebra over $\bar Z$ of a
symmetric bilinear form, where $\bar Z$ is the filed of fractions of
the center $Z=Z(J)$ of $J$.
Thus $\bar J$ has degree less
than or equal $2$ over $\bar Z$, that is there exists a $\bar
Z$-linear form $\hbox{tr}:\bar J\longrightarrow\bar Z$ and a $\bar
Z$-quadratic map $n:\bar J\longrightarrow \bar Z$ with $n(1)=1$ such
that for all $x\in\bar J$,
$$x^2-\hbox{tr}(x)x+n(x)1=0.$$
Let {$n:\bar J\times\bar J\longrightarrow \bar Z$} be the symmetric
$\bar Z$-bilinear form associated to the quadratic map $n$. Let
$W:=\{x\in\bar J\mid\hbox{tr}(x)=0\}$. Then $\bar J=\bar Z 1\oplus W$
is the Jordan algebra over $\bar Z$ of the bilinear form
$$h:=-\frac{1}{2}n\fm_{\mid_{W\times W}}.
$$
If $\dim_{\bar Z}\bar J=1$, then by Lemma \ref{ipm6}(iii),
$\supp(J)=\Gamma=G$ and so $J=Z$. Hence $J$ is a commutative
associative torus and so is $G$-graded isomorphic to the group
algebra of $G$ over $\bbbf$.

We assume from now on that $\dim_{\bar Z}\bar J\not=1$.
The same
argument as in \cite[Claim 1]{Yo1} shows that
\begin{equation}\label{ipm2}
\begin{array}{c}
\hbox{tr}(\bar{J}_{\bar \a})=\{0\}\quad\hbox{for}\quad\a\in G\setminus\Gamma,\\
2G\sub\Gamma\subsetneq\hbox{supp(J)}\andd\supp(J)+\Gamma=\supp(J).
\end{array}
\end{equation}
Moreover,
\begin{equation}\label{imp3}
G/\Gamma\hbox{ {cannot} be a nontrivial cyclic group}.
\end{equation}

Recall from Lemma \ref{ipm6}(ii) that $J=\bigoplus_{\bar\a\in
G/\Gamma}J_{\bar\a}$ is a $G/\Gamma$-graded algebra over $Z$. Then
$\hbox{tr}(J_{\bar\a})\sub\hbox{tr}(\bar{J}_{\bar\a})=\{0\}$ for
$\bar\a\not=\bar{0}$, by \eqref{ipm2}. So $V:=\bigoplus_{\bar\a\not=\bar
0}J_{\bar\a}=\bigoplus_{\a\in G\setminus\Gamma}ZJ_\a\sub W.$ Then
$$J=\bigoplus_{\bar\a\in
G/\Gamma}J_{\bar\a}=\bigoplus_{0\not=\bar\a\in G/\Gamma}+J_{\bar
0}=V\oplus Z,$$ as a direct sum of $Z$-modules. For $x,y\in V$,
$x\cdot y=h(x,y)\cdot 1\in J\cap\bar{Z}\cdot 1=J\cap Z(\bar{J})=Z$.
Therefore, $J=Z\oplus V$ is the Jordan algebra over $Z$ of
$f:=h_{\mid_{V\times V}}$. Let $S:=\supp(J)$. By Lemma \ref{b3},
$S$ is a pointed reflection space in $G$. By (\ref{ipm2}), $\Gamma$ is
a proper subset of $S$ and the pair (S,$\Gamma$) satisfies
(\ref{ipm1}). Next let $I$ be a set of coset representatives for $\{\sg+\Gamma\setminus \sg\in S\}\setminus\{\Gamma\}$, namely
$$S=\bigcup_{\ep\in I\cup\{0\}}(\ep+\Gamma).
$$
For $\ep\in I$, let $0\not=t_\ep\in J_\ep.$ Then using Lemma
\ref{ipm6}(ii),
$V=\bigoplus_{\bar\a\not=0}J_{\bar\a}=\bigoplus_{\ep\in I}Zt_\ep$, as
direct sum of $Z$-modules. Since
$Z=\bigoplus_{\gamma\in\Gamma}J_\gamma$ is a commutative associative
$\Gamma$-torus, $Z$ can be identified with the group algebra
$\bbbf[\Gamma]$ of $\Gamma$ over $\bbbf$. If $\ep\not=\ep'\in I$, we
have $e+\ep'\not\in\Gamma$ (since $\ep$ and $\ep'$ are distinct coset
representatives of $\Gamma$ in $S$). Therefore,
$$t_\ep t_{\ep'}=f(t_\ep, t_{\ep'})\in J_{\overline{\ep+\ep'}}\cap
J_{\bar 0}=\{0\}.$$ Also $0\not=t^2_{\ep}=f(t_\ep,t_\ep)\in
J_{2\ep}=\bbbf z^{2\ep}$, say $f(t_\ep,t_\ep)=a_\ep$ for some
$0\not=a_\ep\in \bbbf$. (We note that $2\ep\in 2G\sub\Gamma.$) Now
since $V=\bigoplus_{\ep\in I}Zt_\ep$, it is clear that the bilinear
form $f$ here coincides with the one given in Example \ref{clifford2}
(see (\ref{ipm5})). {Thus $J$ is graded isomorphic to the Clifford $G$-torus $J(S,\Gamma,\{a_\ep\}_{\ep\in I})$ of Example \ref{clifford2} associated to $(S,\Gamma,\{a_\ep\})$.} \qed

\section{\bf Jordan tori of Albert type}\label{albert-type}\setcounter{equation}{0}
Throughout this section, we assume that $G$ is a torsion free abelian
group. 
{We recall that an Albert algebra is by definition
a form of a 27-dimensional central simple exceptional Jordan algebra
of degree 3.}
{We also recall that a Jordan torus of
Albert type is by definition a Jordan torus whose central closure is
an Albert algebra.}

\begin{DEF}\label{yoshii-6.4}\cite[Defintition 6.4]{Yo1}
{\em A prime Jordan or associative algebra $P$ over $\bbbf$ is said
to have {\it central degree} $3$, if the central closure $\bar P=\bar
Z\otimes_ZP$ is finite dimensional and has (generic) degree $3$.}
\end{DEF}

The following is a modified version of \cite[Proposition
6.7]{Yo1}. The proof follows exactly by the same reasoning as in
the proof of \cite[Propostion 6.7]{Yo1}.

\begin{pro}\label{6.7-Yoshii}
Let $G$ be a torsion free abelian group and $T=\bigoplus_{\a\in
G}T_\a$ be a Jordan or an associative $G$-torus over
$\bbbf$ of central degree $3$. Let $\tr$ be the generic trace of the
central closure $\bar T$, and let $\Gamma$ be the central grading
group of $T$. Then $3G\sub\Gamma\subsetneq G$ and $\supp(T)=G$.
Moreover for any $\a\in G\setminus\Gamma$, we have $\tr(T_\a)=\{0\}$.
\end{pro}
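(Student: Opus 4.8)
The plan is to reduce everything to the central closure $\bar T$, which by hypothesis is a finite-dimensional $\bar Z$-algebra of generic degree $3$, and then transport structural information back to $T$ using the grading. First I would recall from Lemma \ref{b3}(v) that $T$ is a domain (hence prime) with $Z=Z(T)$ an integral domain, so $\bar Z$ and $\bar T=\bar Z\otimes_Z T$ make sense, and from the paragraph following Lemma \ref{b3} that $\bar T$ is a domain (associative or Jordan) over $\bar Z$. Since $\bar T$ has degree $3$, every nonzero $x\in\bar T$ satisfies its generic cubic $x^3-\tr(x)x^2+s(x)x-n(x)1=0$ with $n(x)\neq 0$ (as $\bar T$ is a domain), so in particular $\bar T$ is a division algebra over $\bar Z$ and each $x\in\bar T$ generates a field extension of $\bar Z$ of degree $1$ or $3$.

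Next I would establish $\supp(T)=G$: in the associative case this is automatic as in Subsection \ref{associative}, and in the Jordan case it follows from Lemma \ref{b3}(iv) together with condition (T1) once we know the support is a subgroup, which we get from the degree-$3$ constraint below; alternatively one argues directly that a nonzero homogeneous $x\in T_\a$ is invertible with $x^{-1}\in T_{-\a}$, so $\supp(T)=-\supp(T)$, and then uses the cubic identity to see products of homogeneous elements stay homogeneous with additive degrees. The key step is the trace vanishing: fix $\a\in G\setminus\Gamma$ and $0\neq x\in T_\a$. Applying Lemma \ref{b3}(vii) (or rather the analogous homogeneity of powers) and the generic cubic to $x$, one sees $x^2\in T_{2\a}$ and $x^3\in T_{3\a}$; since $\tr(x)x^2$ and $n(x)1$ and $s(x)x$ must each be homogeneous of degree $3\a$ for the identity $x^3-\tr(x)x^2+s(x)x-n(x)1=0$ to hold in a $G$-graded algebra, and $x^2\in T_{2\a}$, $x\in T_\a$, $1\in T_0$, the only way these degrees can match $3\a$ (using $\a,2\a,0$ all distinct mod the relevant grading, which uses torsion-freeness) is that $\tr(x)=s(x)=0$ unless the corresponding degree coincides with $3\a$. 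Here $\tr(x)\in\bar Z$, and $\tr(x)\neq 0$ would force $2\a\equiv 0 \pmod\Gamma$, i.e. $2\a\in\Gamma$; combined with $3\a\in 3G\subseteq$ (what we want to show is) $\Gamma$ and torsion-freeness this would give $\a\in\Gamma$, a contradiction. This is essentially the argument of \cite[Claim 1]{Yo1} used in the proof of Theorem \ref{clif}, adapted to degree $3$.

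The remaining inclusions $3G\subseteq\Gamma\subsetneq G$ then fall out: for $\a\in G$ and $0\neq x\in T_\a$, the element $x^3\in T_{3\a}$ is, by the generic cubic with vanishing trace and linear term, equal to $n(x)1\in\bbbf^\times T_0$ up to scalar when $\a\notin\Gamma$ — wait, more carefully, $n(x)\in\bar Z$ is a unit and $x^3=n(x)1$ forces $3\a\in\supp(Z\cdot 1)=\Gamma$, whence $3G\subseteq\Gamma$; and $\Gamma\neq G$ because $\bar T$ has degree $3>1$, so $\dim_{\bar Z}\bar T>1$, hence by Lemma \ref{ipm6}(iii) $\supp(T)/\Gamma$ is nontrivial, giving $\Gamma\subsetneq G$. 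The main obstacle I anticipate is the bookkeeping in the trace-vanishing step: one must be careful that the generic minimal polynomial has coefficients in $\bar Z$ rather than $Z$, so a priori $\tr(x)$ is a fraction $z_1/z_2$ with $z_i$ homogeneous; clearing denominators and using that $Z$ is a graded domain, the homogeneity argument still applies to $z_1 x^2$, and one concludes $\tr(x)=0$ in $\bar Z$. I would lean on the cited \cite[Proposition 6.7]{Yo1} and \cite[Claim 1]{Yo1} for the precise form of this computation, remarking only that torsion-freeness of $G$ is exactly what makes $\a,2\a,3\a$ distinct modulo $\Gamma$ when $\a\notin\Gamma$ (using $3\a\in\Gamma$), which is the crux.
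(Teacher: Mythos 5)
Your overall plan---pass to the central closure $\bar T$, use the generic cubic $x^3-\tr(x)x^2+s(x)x-n(x)1=0$, and compare homogeneous components in the $G/\Gamma$-grading of $\bar T$---is exactly the reasoning of \cite[Proposition 6.7]{Yo1} that the paper invokes (the paper offers no independent argument, only the assertion that Yoshii's proof carries over). But your reconstruction has a genuine circularity and a missing case, plus an unproved step.

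The circularity: you prove $\tr(T_\a)=\{0\}$ for $\a\notin\Gamma$ by arguing that $\tr(x)\neq 0$ forces $2\a\in\Gamma$ and then deriving $\a\in\Gamma$ by \emph{also} using $3\a\in\Gamma$ --- which you only establish afterwards, \emph{from} the vanishing of $\tr$ and $s$ (via $x^3=n(x)1$). You flag this yourself (``what we want to show is''), but as written neither claim is proved before the other. The repair is a single case analysis: $x^3\neq 0$ lies in degree $\overline{3\a}$, while the other three terms lie in degrees $\overline{2\a},\bar\a,\bar 0$, so one of these must equal $\overline{3\a}$; the identifications $\overline{3\a}=\overline{2\a}$ and $\overline{2\a}=\bar\a$ each force $\a\in\Gamma$, leaving only $3\a\in\Gamma$ (the good case, from which $\tr(x)=s(x)=0$ and $x^3\in\bar Z$ follow by comparing the now-distinct degrees $\bar 0,\bar\a,\overline{2\a}$) or $2\a\in\Gamma$. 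That last case is the one you never exclude: if $2\a\in\Gamma$ then $x^2\in\bar T_{\bar 0}=\bar Z$ while $x\notin\bar Z$, so $\bar Z[x]$ would be a quadratic subfield of the degree-$3$ division algebra $\bar T$, which is impossible; without this step you get neither $3G\sub\Gamma$ nor the trace vanishing. Relatedly, your parenthetical that torsion-freeness of $G$ makes $\a,2\a,0$ distinct modulo $\Gamma$ is wrong: $G/\Gamma$ is a torsion group, and $2\a\in\Gamma$ is precisely the configuration that must be killed by the degree-$3$ structure, not by torsion-freeness (which enters elsewhere, e.g.\ in making $T$ a domain and $G$ ordered).

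Second gap: $\supp(T)=G$ in the Jordan case. You say it follows ``once we know the support is a subgroup, which we get from the degree-3 constraint below,'' but nothing below shows this, and your alternative (``products of homogeneous elements stay homogeneous with additive degrees'') is automatic in any graded algebra and says nothing about the non-vanishing of $T_\a T_\b$. The actual argument: once $3G\sub\Gamma$ is known, $G/\Gamma$ is a vector space over $\bbbz_3$; the support $S$ satisfies $S+\Gamma=S$ and $S-2S\sub S$ by Lemma \ref{b3}(iv), and since $-2\equiv 1 \;\mod 3$ the image of $S$ in $G/\Gamma$ is closed under addition, contains $0$ and its own negatives, hence is a subgroup; as it generates $G/\Gamma$ by (T1) it equals $G/\Gamma$, whence $S=G$. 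With these two repairs your argument becomes the paper's (i.e.\ Yoshii's) proof.
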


{The following example gives a construction of an associative algebra
which will be crucial in the classification of Jordan tori of Albert type. In what follows, for $n\in\bbbz_{\geq 0}$, we let
$\ep(n)\in\{0,1,2\}$ be the congruent mod $3$ of $n$ and
$\eta(n):=n-\ep(n)$.}

\begin{exa}\label{albert-tori}{
{\em Consider the pair $(G,\Gamma)$ where $G$ is a torsion free abelian
group and $\Gamma$ is a subgroup of $G$ satisfying $3G\sub\Gamma$
and $|G/\Gamma|=9$. Let
$\mu:\Gamma\times\Gamma\longrightarrow\bbbf^\times$ be a symmetric
$2$-cocycle. Assume $\bbbf$ contains
a primitive $3$-th root of unity $q$. We fix $\sg_1$ and
$\sg_2$ in $G$ such that $\{\sg_1+G,\sg_2+G\}$ is a basis for
$G/\Gamma$ over the field of $3$ elements. Then $G=\bigcup_{0\leq
i,j\leq 2}(i\sg_1+j\sg_2+\Gamma)$. {Define} $\lam:G\times
G\longrightarrow \bbbf^\times$ by
\begin{equation}\label{ipm-day}
\begin{array}{l}
\lam(i\sg_1+j\sg_2+\gamma,i'\sg_1+j'\sg_2+\gamma')= \\
\qquad\qquad q^{ji'}
\mu\big(\eta(i+i')\sg_1,\eta(j+j')\sg_2\big)\mu
(\gamma,\gamma')\mu\big(\eta(i+i')\sg_1+\eta(j+j')\sg_2,\gamma+\gamma'\big),
\end{array}
\end{equation}
for $0\leq i,j,i',j'\leq 2$, $\gamma,\gamma'\in\Gamma$.
We claim that $\lam$ is a $2$-cocycle on $G$.
To see this, we must show that for any three fixed elements $\sg:=i\sg_1+j\sg_2+\gamma$,
$\tau:=i'\sg_1+j'\sg_2+\gamma'$ and
$\delta:=i^{''}\sg_1+j^{''}\sg_2+\gamma^{''}$ of the above form, the $2$-cocycle identity \eqref{4ipm} holds,
namely
$$
q^{\ep(j+j')i^{''}}q^{ji'}A=q^{j\ep(i'+i^{''})}q^{j'i^{''}}B
$$
where
$$\begin{array}{c}
A:=\mu(\eta(\ep(i+i')+i^{''})\sg_1,\eta(\ep(j+j')+j^{''})\sg_2)\\
\mu(\gamma+\gamma'+\eta(i+i')\sg_1+\eta(j+j')\sg_2,\gamma^{''})\\
\mu(\eta(\ep(i+i')+i^{''})\sg_1+\eta(\ep(j+j')+j^{''})\sg_2,\gamma+\gamma'+\eta(i+i')\sg_1+\eta(j+j')\sg_2+\gamma'')\\
\mu(\eta(i+i')\sg_1,\eta(j+j')\sg_2)\mu(\gamma,\gamma')\mu(\eta(i+i')\sg_1+\eta(j+j')\sg_2,\gamma+\gamma'),
\end{array}
$$
and
$$\begin{array}{c}
B:=\mu(\eta(i+\ep(i'+i^{''}))\sg_1,\eta(j+\ep(j'+j^{''}))\sg_2)\\
\mu(\gamma'+\gamma^{''}+\eta(i'+i^{''})\sg_1+\eta(j'+j^{''})\sg_2,\gamma)\\
\mu(\eta(i+\ep(i'+i^{''}))\sg_1+\eta(j+\ep(j+j^{''}))\sg_2,\gamma'+\gamma^{''}+\eta(i'+i^{''})\sg_1+\eta(j'+j^{''})\sg_2
+\gamma)\\
\mu(\eta(i'+i^{''})\sg_1,\eta(j'+j^{''})\sg_2)\mu(\gamma',\gamma^{''})\mu(\eta(i'+i^{''})\sg_1+
\eta(j'+j^{''})\sg_2,\gamma'+\gamma^{''}).
\end{array}
$$
Since $q^{\ep(j+j')i^{''}}q^{ji'}=q^{j\ep(i'+i^{''})}q^{j'i^{''}}$, $\lam$ is a $2$-cocycle if and only if
$A=B$. Let
$$a:=\eta(\ep(i+i')+i^{''})\sg_1+\eta(\ep(j+j')+j^{''})\sg_2+
\eta(i+i')\sg_1+\eta(j+j')\sg_2+
\gamma+\gamma'+\gamma^{''}
$$
and
$$b:=
\eta(i+\ep(i'+i^{''}))\sg_1+\eta(j+\ep(j'+j^{''})\sg_2+
\eta(i'+i^{''})\sg_1+\eta(j'+j^{''})\sg_2+
\gamma'+\gamma^{''}+\gamma.
$$
Then in the commutative
associative torus $(\bbbf^t[\Gamma]:=\bigoplus_{\gamma\in\Gamma}\bbbf x^{\gamma},\mu)$, we have
$$\big(x^{\eta(\ep(i+i')+i^{''})\sg_1}x^{\eta(\ep(j+j')+j^{''})\sg_2}\big)
\big(x^{\eta(i+i')\sg_1}x^{\eta(j+j')\sg_2}\big)
(x^{\gamma}x^{\gamma'})x^{\gamma^{''}}=Ax^a
$$
and
$$\big(x^{\eta(i+\ep(i'+i^{''}))\sg_1}x^{\eta(j+\ep(j'+j^{''})}\big)
\big(x^{\eta(i'+i^{''})\sg_1}x^{\eta(j'+j^{''})\sg_2}\big)
(x^{\gamma'}x^{\gamma^{''}})x^\gamma=Bx^b.
$$
Therefore, if we show that $a=b$, then we get $A=B$ if and only if
\begin{equation}\label{tt}
\begin{array}{c}
\big(x^{\eta(\ep(i+i')+i^{''})\sg_1}x^{\eta(\ep(j+j')+j^{''})\sg_2}\big)
\big(x^{\eta(i+i')\sg_1}x^{\eta(j+j')\sg_2}\big)\\
\qquad\qquad\qquad=
\big(x^{\eta(i+\ep(i'+i^{''}))\sg_1}x^{\eta(j+\ep(j'+j^{''})}\big)
\big(x^{\eta(i'+i^{''})\sg_1}x^{\eta(j'+j^{''})\sg_2}\big)
,
\end{array}
\end{equation}
for any $0\leq i,i',i^{''},j,j',j^{''}\leq 2$. Now $a=b$ if and only if
$$\begin{array}{c}
\eta(\ep(i+i')+i^{''})\sg_1+\eta(\ep(j+j')+j^{''})\sg_2+
\eta(i+i')\sg_1+\eta(j+j')\sg_2+
\gamma+\gamma'+\gamma^{''}\\
=
\eta(i+\ep(i'+i^{''}))\sg_1+\eta(j+\ep(j'+j^{''})\sg_2+
\eta(i'+i^{''})\sg_1+\eta(j'+j^{''})\sg_2+
\gamma'+\gamma^{''}+\gamma,
\end{array}
$$
which in turn holds if and only if for any $i,i',i^{''}$,
\begin{equation}\label{rr}
\eta(\ep(i+i')+i^{''})+\eta(i+i')=\eta(i+\ep(i'+i^{''}))+\eta(i'+i^{''}).
\end{equation}
To see that this last equality holds, we note that
$$\ep(i+i')+i^{''}+\eta(i+i')=(i+i')+i^{''}=i+(i'+i^{''})=i+\ep(i'+i^{''})+\eta(i'+i^{''})$$ and so
\begin{eqnarray*}
\eta(\ep(i+i')+i^{''})+\eta(i+i')&=&
\eta\big(\ep(i+i')+i^{''}+\eta(i+i')\big)\\&=&
\eta\big(i+\ep(i'+i^{''})+\eta(i'+i^{''})\big)\\&=&
\eta(i+\ep(i'+i^{''}))+\eta(i'+i^{''}).
\end{eqnarray*}
Then \eqref{rr} holds and $a=b$. Thus $A=B$ if and only if \eqref{tt} holds. Now the
left hand side in \eqref{tt} is equal to
$$\mu(\eta(\ep(i+i')+i^{''})\sg_1,\eta(i+i')\sg_1)x^{c_1\sg_1}\mu(\eta(\ep(j+j')+j^{''})\sg_2,\eta(j+j')\sg_2)x^{c_2\sg_2}
$$where $c_1:=\eta(\ep(i+i')+i^{''})+\eta(i+i')$ and
$c_2:=\eta(\ep(j+j')+j^{''})+\eta(j+j').$ Also the right hand side in \eqref{tt} is equal to
$$\mu(\eta(i+\ep(i'+i^{''}))\sg_1,\eta(i'+i^{''})\sg_2)x^{c'_1\sg_1}\mu(\eta(j+\ep(j'+j^{''}))\sg_2,\eta(j'+j^{''})\sg_2)x^{c'_2\sg_2}
$$where $c'_1:=\eta(i+\ep(i'+i^{''})+\eta(i'+i^{''})$ and
$c'_2:=\eta(j+\ep(j'+j^{''}))+\eta(j'+j^{''}).$ By \eqref{rr}, $c_1=c_2$ and $c'_1=c'_2$.
So $A=B$ if and only if
$$\begin{array}{c}
\mu(\eta(\ep(i+i')+i^{''})\sg_1,\eta(i+i')\sg_1)\mu(\eta(\ep(j+j')+j^{''})\sg_2,\eta(j+j')\sg_2)
\\
\qquad\qquad\qquad=\mu(\eta(i+\ep(i'+i^{''})\sg_1),\eta(i'+i^{''})\sg_1)\mu(\eta(j+\ep(j'+j^{''})\sg_2),\eta(j'+j^{''})\sg_2)
\end{array}
$$
for all $0\leq i,i',i^{''},j,j',j^{''}\leq 2$. But clearly the latter holds if and only if,
\begin{equation}\label{uu}
\begin{array}{c}
\mu(\eta(\ep(i+i')+i^{''})\sg,\eta(i+i')\sg)=\mu(\eta(i+\ep(i'+i^{''})\sg,\eta(i'+i^{''})\sg),
\end{array}
\end{equation}
for all $\sg\in G$ and $0\leq i,i',i^{''}\leq 2$. Let $\a:=\ep(i+i')+i^{''}$, $\b:=i+i'$, $\a':=i+\ep(i'+i^{''})$ and $\b':=i'+i^{''}$.
Then $\eta(\a)+\eta(\b)=\eta(\a+\eta(\b))=\eta(i+i'+i^{''})$. Similarly, $\eta(\a')+\eta(\b')=\eta(i+i'+i^{''})$.
So $\eta(\a)+\eta(\b)=\eta(\a')+\eta(\b')$. From this and the fact that
$\eta(\a),\eta(\b),\eta(\a'),\eta(\b')\in\{0,3\}$, we get $\eta(\a)\eta(\b)=\eta(\a')\eta(\b')\in\{0,9\}$.
Therefore, \eqref{uu} holds. Hence $A=B$ and $\lam$ is a $2$-cocycle.

 We denote the {$2$-cocycle} $\lam$ by
$\lam:=\lam(q,\mu)$ and the corresponding associative
$G$-torus by $(\bbbf^t[G],\lam(q,\mu))_\Gamma$, and call it
the associative $G$-torus associated to the pair $(G,\Gamma)$. Let
$T=(\bbbf^t[G],\lam(q,\mu))_\Gamma$. Note that if we fix
$x_i:=x^{\sg_i}\in T_{\sg_i}$, $i=1,2$, and $x^\gamma\in
T^\gamma$, for each $\gamma\in\Gamma$, then the elements
$x_1^{i_1}x_2^{i_2}x^\gamma$, $0\leq i_1,i_2\leq 2$,
$\gamma\in\Gamma$ form a basis of $T$ over $\bbbf$. Moreover, we
have
$$(x^i_1x_2^jx^\gamma)(x_1^{i'}x_2^{j'}x^{\gamma'})=\lam(i\sg_1+j\sg_2+\gamma,i'\sg_1+j'\sg_2+\gamma')x_1^{\ep(i+i')}x_2^{\ep(j+j')}x^{\gamma''},$$
where $0\leq i,j,i',j'\leq 2$, $\gamma,\gamma'\in\Gamma$ and
$\gamma''=\gamma+\gamma'+\eta(i+i')+\eta(j+j')$. Using this, it is
easy to see that
\begin{equation}\label{central-ipm}
\begin{array}{c}
Z(T)=\bigoplus_{\gamma\in\Gamma}\bbbf x^\gamma,\quad x_2x_1=q x_1x_2,\quad\hbox{and}\vspace{2mm}\\
x_1^{i}x_2^{j}\in Z(T)\quad\hbox{for all $i,j\in\bbbz$ with }\quad
i\equiv j\equiv 0\hbox{ mod }3.
\end{array}
\end{equation}
It follows that the central closure of $T$ is  $9$-dimensional,
namely $\bar{J}:=\bar Z\otimes_Z T\cong \bigoplus_{0\leq i,j\leq
2}\bbbf x_1^ix_2^j$. Since $\bar T$ is domain, it is a division
algebra and so is an associative algebra of central degree $3$
(see Definition \ref{yoshii-6.4}). Note that $Z(T^+)=Z(T)$, so
$$\overline{T^+}=T^+\otimes_{Z(T^+)}\overline{Z(T^+)}={T^+}\otimes_{Z(T)}\overline{Z(T)}=\overline{T}^{^+}.
$$
So $\overline{T^+}$ is a nine dimensional central special Jordan
division algebra over $\overline{Z(T)}$. Hence by Lemma
\cite[Lemma 2.11]{Yo1}, it has degree $3$.}}
\qedexa
\end{exa}

{The following proposition gives a characterization of associative $G$-tori of central degree $3$.}

\begin{pro}\label{yoshii-6.12} Let $G$ be a torsion free abelian group and $T$ be an associative $G$-torus over
$\bbbf$ with central grading group $\Gamma$. Then $T$ has central
degree $3$ if and only if $3G\sub \Gamma$, $\supp(T)=G$ and $G/\Gamma$ is a vector
space of dimension $2$ over the filed of $3$ elements. If $T$ has
central degree $3$, then $\bbbf$ contains a primitive third root of
unity, say $\omega$, and $T\cong (\bbbf^t[G],\lam(\omega,\mu))_\Gamma$ where $\mu$
is a symmetric $2$-cocycle on $\Gamma$.
Moreover, if $\Gamma$ is free abelian or $\bbbf$ is algebraically
closed then $T\cong (\bbbf^t[G],\lam(\omega,1))$.

Conversely, suppose $\bbbf$ contains a primitive third root of
unity $\omega$. Also suppose $G$ is a torsion free abelian group
and $\Gamma$ is a subgroup satisfying $3G\sub\Gamma$ and
$|G/\Gamma|=9$. Let $\mu$ be a symmetric $2$-cocycle on $\Gamma$. Then
$(\bbbf^t[G],\lam(\omega,\mu))_\Gamma$ is an associative $G$-torus
of central degree $3$ with central grading group $\Gamma$.
\end{pro}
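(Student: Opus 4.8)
The plan is to settle the converse first, then prove the two directions of the equivalence, then build the isomorphism, and finally treat the ``moreover'' refinements. The converse needs nothing new: Example \ref{albert-tori} already shows that if $\bbbf$ contains a primitive third root of unity $\omega$ and $3G\sub\Gamma$ with $|G/\Gamma|=9$, then $\lam(\omega,\mu)$ is a $2$-cocycle, $(\bbbf^t[G],\lam(\omega,\mu))_\Gamma$ is an associative $G$-torus whose centre is $\bigoplus_{\gamma\in\Gamma}\bbbf x^\gamma$ (so its central grading group is exactly $\Gamma$), and whose central closure is a $9$-dimensional division algebra, hence of central degree $3$.

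For the equivalence, suppose first that $T$ has central degree $3$. By Proposition \ref{6.7-Yoshii} we get $3G\sub\Gamma\subsetneq G$ and $\supp(T)=G$. Since $\bar T=\bar Z\otimes_ZT$ is the central closure of a prime algebra it is central simple over $\bar Z$, so central degree $3$ forces $\dim_{\bar Z}\bar T=9$; by Lemma \ref{ipm6}(iii), $|G/\Gamma|=\dim_{\bar Z}\bar T=9$, and together with $3G\sub\Gamma$ this says $G/\Gamma$ is a $2$-dimensional space over $\bbbf_3$. Conversely, assume $3G\sub\Gamma$, $\supp(T)=G$ and $\dim_{\bbbf_3}(G/\Gamma)=2$, so $|G/\Gamma|=9$. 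By Lemma \ref{ipm6}(iii), $\bar T$ is a $9$-dimensional algebra over the field $\bar Z$; by the discussion following Lemma \ref{b3} it is a domain, hence a division algebra. It is non-commutative (otherwise $\Gamma=\supp Z(T)=\supp T=G$, against $|G/\Gamma|=9$), and a short grading argument on $\bar T=\bigoplus_{\bar\a\in G/\Gamma}\bar T_{\bar\a}$ — a homogeneous central element of nonzero $G/\Gamma$-degree would generate a cubic field extension of $\bar Z$ inside $Z(\bar T)$, which for dimension reasons would make $\bar T$ commutative — shows $Z(\bar T)=\bar Z$. Thus $\bar T$ is a central division $\bar Z$-algebra of dimension $9$, i.e. of degree $3$, so $T$ has central degree $3$.

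Now suppose $T$ has central degree $3$. The commutation bihomomorphism $\lam_t:G\times G\longrightarrow\bbbf^\times$ of \eqref{quan} and \eqref{tat1} does not depend on the chosen homogeneous basis, vanishes on $\Gamma$ (since $T_\gamma\sub Z(T)$ for $\gamma\in\Gamma$), and satisfies $\lam_t(\sg,\tau)^3=\lam_t(3\sg,\tau)=1$; hence it descends to an alternating bihomomorphism on $G/\Gamma$ with values in the third roots of unity of $\bbbf$. It is nontrivial, since otherwise all homogeneous elements of $T$ commute and $\bar T$ is commutative, contradicting $Z(\bar T)=\bar Z$ with $\dim_{\bar Z}\bar T=9$. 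Choose $\sg_1,\sg_2\in G$ with $\lam_t(\sg_1,\sg_2)\neq 1$; then $\omega:=\lam_t(\sg_2,\sg_1)$ is a cube root of $1$ that is not $1$, hence a primitive third root of unity (here $\mathrm{char}\,\bbbf=0$) lying in $\bbbf$, and $\{\sg_1+\Gamma,\sg_2+\Gamma\}$ is a basis of $G/\Gamma$ (dependence would give $\lam_t(\sg_1,\sg_2)=1$). Fix $0\neq x_i\in T_{\sg_i}$, so $x_2x_1=\omega x_1x_2$, and identify $Z(T)$ with a commutative twisted group algebra $(\bbbf^t[\Gamma],\mu)$, $\mu$ a symmetric $2$-cocycle, via Lemma \ref{tori-11}. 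After replacing $\mu$ by a cohomologous symmetric cocycle — that is, rescaling the homogeneous basis elements $x^{3\sg_1}$, $x^{3\sg_2}$ of $Z(T)$, which are distinct and nonzero as $G$ is torsion free — we may assume $x_1^3=x^{3\sg_1}$ and $x_2^3=x^{3\sg_2}$. Since $G=\bigcup_{0\le i,j\le 2}(i\sg_1+j\sg_2+\Gamma)$ and $T$ is a domain, the set $\{x_1^ix_2^jx^\gamma\mid 0\le i,j\le 2,\ \gamma\in\Gamma\}$ is an $\bbbf$-basis of $T$; a direct computation using only $x_2x_1=\omega x_1x_2$, the centrality and $\mu$-multiplication of the $x^\gamma$, and $x_i^3=x^{3\sg_i}$ produces exactly the multiplication table recorded in Example \ref{albert-tori}, i.e. the structure constants are those of $\lam(\omega,\mu)$, whence $T\cong_G(\bbbf^t[G],\lam(\omega,\mu))_\Gamma$.

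Finally, for the refinements: \eqref{ipm-day} shows that, for symmetric cocycles $\mu$, the bihomomorphism $\lam(\omega,\mu)_t$ does not depend on $\mu$ (the $\mu$-factors are symmetric in the two group arguments), so $\lam(\omega,\mu)/\lam(\omega,1)$ is a \emph{symmetric} $2$-cocycle on $G$. If $\Gamma$ is free abelian, then $G$ is free abelian too, being a subgroup of $\frac{1}{3}\Gamma\cong\Gamma$ inside $G\otimes_\bbbz\bbbq$; if $\bbbf$ is algebraically closed, then $\bbbf^\times$ is divisible. In either case $\mathrm{Ext}^1_\bbbz(G,\bbbf^\times)=0$, so every symmetric $2$-cocycle on $G$ is a coboundary; hence $\lam(\omega,\mu)\sim\lam(\omega,1)$ and $T\cong_G(\bbbf^t[G],\lam(\omega,1))$. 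The main obstacle is the third paragraph: one must check that the cohomological adjustment of $\mu$ making $x_i^3=x^{3\sg_i}$ keeps $\mu$ symmetric and leaves the relations among $x_1$, $x_2$ and the $x^\gamma$ intact, and then that the structure constants really coincide with $\lam(\omega,\mu)$ — a bookkeeping task made routine by the fact that the genuinely delicate identity, associativity of $\lam(\omega,\mu)$, has already been verified in Example \ref{albert-tori}.
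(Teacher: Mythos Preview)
Your proof is correct and reaches the same conclusions, but it departs from the paper's argument in three places worth noting.

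For the reverse implication of the equivalence, the paper simply observes that $\bar T$ is a $9$-dimensional associative division algebra over $\bar Z$ and invokes \cite[Lemma 2.11]{Yo1} to conclude degree $3$. You instead supply a self-contained argument: you rule out commutativity of $\bar T$ (else $\Gamma=G$) and then use the $G/\Gamma$-grading to show $Z(\bar T)=\bar Z$, so that $\bar T$ is genuinely central of dimension $9$. Your route avoids the external citation at the cost of a short extra computation.

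For producing $\omega$ and the basis $\sg_1,\sg_2$, the paper first fixes an arbitrary $\bbbz_3$-basis of $G/\Gamma$, obtains $x_2x_1=qx_1x_2$ with $q^3=1$, $q\neq 1$, and then disposes of the ambiguity $q\in\{\omega,\omega^2\}$ by the explicit swap isomorphism $x_1^{i}x_2^{j}x^\gamma\mapsto x_2^{i}x_1^{j}x^\gamma$. You instead exploit the bihomomorphism $\lam_t$ from the outset, choosing $\sg_1,\sg_2$ with $\lam_t(\sg_1,\sg_2)\neq 1$ and \emph{defining} $\omega:=\lam_t(\sg_2,\sg_1)$; linear independence of $\sg_1+\Gamma,\sg_2+\Gamma$ and primitivity of $\omega$ then come for free, and no swap is needed.

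For the ``moreover'' clause, the paper trivializes $\mu$ on $\Gamma$ directly by citing \cite[Lemma 1.1]{OP}. You instead observe that $\lam(\omega,\mu)/\lam(\omega,1)$ is a symmetric $2$-cocycle on all of $G$ (the $\mu$-factors in \eqref{ipm-day} being symmetric), note that $\Gamma$ free forces $G$ free via $G\sub\frac{1}{3}\Gamma$, and kill the symmetric cocycle with $\mathrm{Ext}^1_\bbbz(G,\bbbf^\times)=0$. Both arguments are valid; yours is again more self-contained, while the paper's is shorter given the citation. The normalization step $x_i^3=x^{3\sg_i}$ that you flag as the main bookkeeping point is handled only implicitly in the paper (``we see that'' \eqref{ipm-day} holds), so your level of detail there matches the original.
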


\proof Let $T=\bigoplus_{\a\in G}T^\a$ be an associative torus
over $\bbbf$ of central degree $3$ and let $\bar T$ be its central
closure over $\bar Z$. By Proposition \ref{6.7-Yoshii}, $\supp(T)=G$ and $G/\Gamma$
is a nontrivial vector space over the filed of $3$ elements. By
Lemma \ref{ipm6}(iii), we have $\dim_{\bar Z}{\bar T}=|G/\Gamma|$.
Since by definition $\bar T$ is finite dimensional over $\bar Z$,
we have $\dim_{\bar Z}\bar T=3^m$ for some positive integer $m$.
Now $\bar T$ as a finite dimensional associative domain is a
division algebra, by Wedderburn's structure theorem. So as  $\bar T_{\bar Z}$ is a
central simple associative algebra with $\dim\bar T_{\bar Z}=3^m,$ we have $m=2$. It is also
clear that an associative torus whose central grading group
$\Gamma$ satisfies $|G/\Gamma|=9$ has central degree $3$. In fact
$\bar T$ has dimension $9$ over $\bar Z$ and is a division
associative algebra so by Lemma \cite[Lemma 2.11]{Yo1} it has degree
$3$.


Next, we assume that $T=\bigoplus_{\a\in G}T^\a$ is an
associative torus whose central grading group satisfies $3G\sub
\Gamma\subsetneq G$, $|G/\Gamma|=9$ and $\supp(T)=G$. We fix
$\sg_1$, $\sg_2$ in $G$ such that $\{\sg_i+\Gamma\mid i=1,2\}$ is a basis
for the vector space $G/\Gamma$. Then $G=\bigcup_{0\leq i,j\leq
2}(i\sg_1+j\sg_2+\Gamma)$. We fix $x_i:=x^{\sg_i}\in T^{\sg_i}$,
$i=1,2$ and $x^\gamma\in
T^\gamma$, for each $\gamma\in\Gamma$. Then $x_1x_2\not=x_2x_1$ and the elements $x_1^{i_1}x_2^{i_2}x^\gamma$, $0\leq
i_1,i_2\leq 2$, $\gamma\in\Gamma$ form a basis for $T$ over
$\bbbf$. Moreover, as $3G\sub\Gamma$,
\begin{equation}\label{central-ipm}
(x_1^{i_1}x_2^{i_2})^3x^\gamma\in Z(T)\quad\hbox{for all}\quad
0\leq i_1,i_2\leq 2\hbox{ and }\gamma\in\Gamma.
\end{equation}
Since $x_1x_2,x_2x_1\in T^{\sg_1+\sg_2}$, there exists $q\in\bbbf^\times$ such that $x_2x_1=qx_1x_2$.
Then as
$x_1^3$ is central, we get $q^3=1$.
Thus $\bbbf$ must contain a primitive third
root of unity, say $\omega$. Then $q=\omega$ or $\omega^2$.  Let $\lam:G\times
G\longrightarrow\bbbf^\times$ be the corresponding $2$-cocycle for
$T$ with respect to the basis mentioned above.
Then we have,
$$(x^i_1x_2^jx^\gamma)(x_1^{i'}x_2^{j'}x^{\gamma'})=\lam(i\sg_1+j\sg_2+\gamma,i'\sg_1+j'\sg_2+\gamma')x_1^{\ep(i+i')}x_2^{\ep(j+j')}x^{\gamma''},$$
where $0\leq i,j,i',j'\leq 2$, $\gamma,\gamma'\in\Gamma$,
$\gamma''=\gamma+\gamma'+\eta(i+i')+\eta(j+j')$, and $\ep$ and
$\eta$ are defined as in Example \ref{albert-tori}. Denote by
$\mu:\Gamma\times\Gamma\longrightarrow\bbbf^\times$ the symmetric
$2$-cocycle obtained from $\lam$ by restriction to $\Gamma$.  Then
using (\ref{central-ipm}) and the facts that $x_2x_1=q
x_1x_2$ and $\eta(n)G\sub 3G\sub\Gamma$ for all $n\in\bbbz$, we
see that
\begin{equation}\label{ipm-day}
\begin{array}{l}
\lam(i\sg_1+j\sg_2+\gamma,i'\sg_1+j'\sg_2+\gamma')= \\
\qquad\qquad q^{ji'}
\mu\big(\eta(i+i')\sg_1,\eta(j+j')\sg_2\big)\mu
(\gamma,\gamma')\mu\big(\eta(i+i')\sg_1+\eta(j+j')\sg_2,\gamma+\gamma'\big),
\end{array}
\end{equation}
for $0\leq i,j,i',j'\leq 2$, $\gamma,\gamma'\in\Gamma$.
 Then, in the notation of Example \ref{albert-tori}, we have
$T=(\bbbf^t[G],\lam(q,\mu))_\Gamma$. But
one can see that the corresponding associative tori  $(\bbbf^t[G],\lam(\omega,\mu))_\Gamma$ and $(\bbbf^t[G],\lam(\omega^2,\mu))_\Gamma$ are
isomorphic, under the isomorphism induced by
$x_1^{i_1}x_2^{i_2}x^\gamma\mapsto x_2^{i_1}x_1^{i_2}x^\gamma$. So
we may assume that $q=\omega$, namely $T=(\bbbf^t[G],\lam(\omega,\mu))_\Gamma$.
We remind from \cite[Lemma 1.1]{OP} that if $\Gamma$ is free
abelian or $\bbbf$ is algebraically closed, then any commutative
twisted group algebra on $\Gamma$ is isomorphic to the commutative
untwisted group algebra. Thus if $\Gamma$ is free abelian or
$\bbbf$ is algebraically closed then $\mu$ can be taken to be $1$.
The converse part follows from Example \ref{albert-tori}
\qed

\begin{rem}\label{rem-ipm-90}
In the notation of Proposition \ref{yoshii-6.12}, let $G$ be free abelian of rank $\geq 2$ with a basis indexed in a
set, say $J$. Assume, $1,2\in J$. By Proposition \ref{yoshii-6.12},
$T\cong(\bbbf^t[G],\lam(\omega,1))$. However, by Example
\ref{gday-1}, we may assume $\lam(\omega,1)=\bq_\omega$, where
$\bq_\omega=(q_{ij})_{i,j\in J}$ is the quantum matrix satisfying
\begin{equation}\label{ipm-91}
q_{ij}=\left\{\begin{array}{ll} \omega&\hbox{if }i=1,j=2,\\
\omega^{-1}&\hbox{if }i=2,j=1,\\
1&\hbox{otherwise}.\end{array}\right.
\end{equation}
\end{rem}

{Using our earlier results and a modified reasoning of \cite[Proposition 6.13]{Yo1}, we get the following.
To be precise, we provide details of the proof.}

\begin{pro}\label{yoshii-6.13}
Let $\omega$ be a third root of unity. Let $J$ be a special Jordan {$G$-torus} over $\bbbf$ of central degree
$3$ with central grading group $\Gamma$. Then $3G\sub\Gamma\subsetneq G$ and $|G/\Gamma|=9$. Also,
$$J\cong_G\left\{\begin{array}{ll}
(\bbbf^t[G],\lam(\omega,\mu))_\Gamma^+&\hbox{if }\omega\in\bbbf\\
H((\bbbe^t[G],\lam(\omega,\mu))_\Gamma,\sg)&\hbox{if }\omega\not\in\bbbf,\end{array}
\right.
$$
 where $\mu$ is a $2$-cocycle on
$\Gamma$, $\bbbe=\bbbf(\omega)=\bbbf(\sqrt{-3})$ and
$\sg$ is the unique non-trivial Galois automorphism of $E$. \end{pro}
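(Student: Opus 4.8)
The plan is to reduce to the case where $G$ is free abelian of finite rank --- in which \cite[Proposition 6.13]{Yo1} applies verbatim --- and then to assemble the local descriptions using the direct union machinery of Section \ref{Hermitian-type}. First I would record the numerical part. Since $J$ is special while its central closure $\bar J$ has degree $3$, the algebra $\bar J$ is not a Jordan algebra of a symmetric bilinear form, so $J$ is not of Clifford type; being special, it is not of Albert type; hence, by Zelmanov's trichotomy, $J$ is of Hermitian type, i.e. $q_{48}(J)\neq\{0\}$. By Proposition \ref{6.7-Yoshii}, $3G\sub\Gamma\subsetneq G$ and $\supp(J)=G$, so $G/\Gamma$ is a vector space over the field of three elements. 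By Lemma \ref{b3}(v), $\bar J$ is a finite dimensional Jordan domain over the field $\bar Z$, hence a division algebra, and by Lemma \ref{ipm6}(iii) $\dim_{\bar Z}\bar J=|G/\Gamma|$. A special central simple Jordan division algebra of degree $3$ has dimension $9$ or $15$ over its centre, and $9$ is the only power of $3$ among these; therefore $|G/\Gamma|=9$.

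Next I would fix $\sg_1,\sg_2\in\supp(J)=G$ whose cosets form a basis of $G/\Gamma$, and fix $x_1,\dots,x_{12}\in J^{\tau_1}\op\cdots\op J^{\tau_r}$ (suitable $\tau_i\in\supp(J)$) with $q_{48}(x_1,\dots,x_{12})\neq 0$. Let $\mathcal M$ be the set of finite subsets of $\supp(J)$ containing $\mfm_0:=\{0,\sg_1,\sg_2,\tau_1,\dots,\tau_r\}$, directed by inclusion; for $\mfm\in\mathcal M$ put $G_\mfm:=\la\mfm\ra$ and $J_\mfm:=\op_{\sg\in G_\mfm}J^\sg$. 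By Lemma \ref{b3}, $J$ is the direct union of the Jordan $G_\mfm$-tori $J_\mfm$, each $G_\mfm$ being free abelian of finite rank. Each $J_\mfm$ is special and has $q_{48}(J_\mfm)\neq 0$, hence is of Hermitian type, so $\overline{J_\mfm}$ is not a Jordan algebra of a symmetric bilinear form and has degree $\geq 3$; if $\Gamma_\mfm$ is its central grading group, then $\Gamma_\mfm\supseteq\Gamma\cap G_\mfm$, so $\dim_{\overline{Z(J_\mfm)}}\overline{J_\mfm}=|G_\mfm/\Gamma_\mfm|\leq|G_\mfm/(\Gamma\cap G_\mfm)|\leq|G/\Gamma|=9$. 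On the other hand this dimension is that of a special central simple Jordan division algebra of degree $\geq 3$, hence is at least $9$; comparing with the bound above, it equals $9$, so $\Gamma_\mfm=\Gamma\cap G_\mfm$ and $\overline{J_\mfm}$ has degree exactly $3$. Thus each $J_\mfm$ is a special Jordan $G_\mfm$-torus of central degree $3$, and \cite[Proposition 6.13]{Yo1} applies: with $\bbbe:=\bbbf(\omega)$, either $\omega\in\bbbf$ and every $J_\mfm$ is of plus type, or $\omega\notin\bbbf$ and every $J_\mfm$ is of extension type, the alternative depending only on $\bbbf$.

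For the gluing I would argue as follows. By Theorem \ref{92-1} and its proof, $J$ is of involution, plus or extension type and, for exactly this $\mathcal M$, the components $J_\mfm$ have the same type as $J$; since, for free abelian groups, these types are mutually exclusive, $J$ has the type found above. If $\omega\in\bbbf$, then $J\cong_G A^+$ for an associative $G$-torus $A=(\bbbf^t[G],\lam)$; here $Z(A)=Z(A^+)=Z(J)$ (Example \ref{albert-tori}), so $A$ has central grading group $\Gamma$, and $\overline{A}^+=\overline{A^+}=\bar J$ has degree $3$, whence $\bar A$ has degree $3$ by \cite[Lemma 2.11]{Yo1}; Proposition \ref{yoshii-6.12} then yields $A\cong_G(\bbbf^t[G],\lam(\omega,\mu))_\Gamma$ for a symmetric $2$-cocycle $\mu$ on $\Gamma$, whence $J\cong_G(\bbbf^t[G],\lam(\omega,\mu))_\Gamma^+$. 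If $\omega\notin\bbbf$, then $J\cong_G H((\bbbe'^t[G],\lam),\sg)$ for a quadratic extension $\bbbe'/\bbbf$, a $2$-cocycle $\lam$ and a $\sg_{\bbbe'}$-semilinear involution $\sg$ (Lemma \ref{4.9 yoshii}); then, as in the proof of that lemma, $\bbbe'\otimes_\bbbf J\cong_G(\bbbe'^t[G],\lam)^+$ is the plus algebra of an associative $G$-torus $A_{\bbbe'}$ over $\bbbe'$ with central grading group $\Gamma$ (a homogeneous element is central in $\bbbe'\otimes J$ iff it is central in $J$) whose central closure has dimension $|G/\Gamma|=9$ and degree $3$ over its centre; Proposition \ref{yoshii-6.12} over $\bbbe'$ then shows $\bbbe'$ contains a primitive third root of unity, so $\bbbe'=\bbbf(\omega)=\bbbe$, and since $\omega$ is a root of $X^2+X+1$ and $\bbbf$ has characteristic zero, $\bbbe=\bbbf(\sqrt{-3})$; it also gives $A_{\bbbe'}\cong_G(\bbbe^t[G],\lam(\omega,\mu))_\Gamma$ for a symmetric $2$-cocycle $\mu$ on $\Gamma$ valued in $\bbbe^\times$, whence $J\cong_G H((\bbbe^t[G],\lam(\omega,\mu))_\Gamma,\sg)$.

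The hard part will be the reduction-and-gluing step rather than any single computation: one must be sure that each finite rank piece $J_\mfm$ genuinely meets the hypotheses of \cite[Proposition 6.13]{Yo1} --- in particular that it has central degree $3$, which rests on the Hermitian/Clifford dichotomy together with the dimension bound $|G_\mfm/\Gamma_\mfm|\leq 9$ --- and that the local conclusions patch into a global one, for which Theorem \ref{92-1} and Proposition \ref{classification} carry the weight. Pinning down the quadratic extension as $\bbbf(\sqrt{-3})$ in the extension case is then only a short field-theoretic observation.
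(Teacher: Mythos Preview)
Your approach is correct but takes a more indirect route than the paper's own proof. The paper works entirely at the level of $J$: after noting that $J$ must be of Hermitian type (Clifford being excluded since $\deg\bar J=3>2$), it invokes Theorem~\ref{92-1} directly on $J$ to obtain the involution/plus/extension trichotomy, and then eliminates the involution case in one line --- if $J\cong_G H((\bbbf^t[G],\lam),\theta_q)$ with $q$ a quadratic map, then $(x^\sg)^2$ is central for every $\sg$, so $2G\sub\Gamma$; combined with $3G\sub\Gamma$ this forces $\Gamma=G$, contradicting $\Gamma\subsetneq G$. The plus and extension cases are then handled exactly as in your final paragraph, via Proposition~\ref{yoshii-6.12} over $\bbbf$ or over $\bbbe$ (and in the extension case the paper reads off $\omega\notin\bbbf$ from $\theta(x_i)=x_i$ and $x_2x_1=\omega x_1x_2$, rather than from the finite-rank pieces).

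By contrast, you descend to the finite-rank pieces $J_\mfm$, verify that each has central degree~$3$, apply \cite[Proposition~6.13]{Yo1} there, and then lift the type back to $J$ via the ``same type'' clause of Theorem~\ref{92-1} together with a mutual-exclusivity argument. This works, but the detour is unnecessary: the one-line $2G\cup 3G\sub\Gamma\Rightarrow\Gamma=G$ observation already rules out involution type for arbitrary torsion-free $G$, and everything else you use (Theorem~\ref{92-1}, Proposition~\ref{yoshii-6.12}) is already stated in that generality. Your route has the virtue of making the reduction to \cite{Yo1} completely explicit and of exercising the direct-union machinery, but it costs you the extra verification that each $J_\mfm$ has central degree~$3$ and the somewhat delicate type-matching step --- whereas the paper's argument is a few lines.
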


\proof Since $J$ is special, it is either a Hermitian torus
or a Clifford torus. We have already seen that if $J$
is Clifford torus then $\deg(\bar J)\leq 2$ (see \S
\ref{clifford-type}).  So $J$ can only be a Hermitian torus. By
Proposition \ref{6.7-Yoshii}, $\supp(J)=G$. Therefore by Theorem
\ref{92-1}, we have one of the {following} three possibilities:

- $J\cong H((\bbbf^t[G],\lam),\theta_q)$, $\lam$ a $2$-cocycle and $q$ a quadratic map,

- $J\cong (\bbbf^t[G],\lam)^+$, $\lam$ a $2$-cocycle,

- {$J\cong H((\bbbe^t[G],\lam),\theta)$, $\bbbe$ a quadratic
field extension of $\bbbf,$ $\lam$ a $2$-cocycle and $\theta$ an
involution, as defined in Lemma \ref{4.9 yoshii}}.

We begin by showing that the first possibility, considering it as an
identification, {cannot} happen. Consider the center $Z$ of
$J=H((\bbbf^t[G],\lam),\theta_q)$. By Proposition \ref{6.7-Yoshii},
$3G\sub\Gamma\subsetneq G=\supp(J)$. But as $q$ is a quadratic map,
we have $(x^\sg)^2$ is central for any $\sg\in G$ implying that
$2G\sub\Gamma$. Now $2G\cup 3G\sub \Gamma$ implies $\Gamma=G$ which
is absurd.

We now consider the second and the third possibilities. By definition,
$\bar J$ is a finite dimensional central special Jordan division
algebra over $\bar Z$ of degree $3$. By \cite[2.11]{Yo1} and Proposition \ref{ipm6}(iii), we have
$\dim_{\bar Z}\bar J=9$ and $G/\Gamma$ is a $2$ dimensional vectors
space over the field of $3$ elements.

If $J\cong_G (\bbbf^t[G],\lam)^+$, $\lam$ a $2$-cocycle, then taking
this as an identification, we get
$Z(J)=Z((\bbbf^t[G],\lam)^+)=Z((\bbbf^t[G],\lam))$ and so $\Gamma$ is
the central grading group of $(\bbbf^t[G],\lam)$. Then by Proposition
\ref{yoshii-6.12}, $\bbbf$ contains a primitive third root of unity
$\omega$ and $(\bbbf^t[G],\lam)\cong
(\bbbf^t[G],\lam(\omega,\mu))_\Gamma$, where $\mu$ is a symmetric
$2$-cocycle on $\Gamma$. Thus $J\cong (\bbbf^t[G],\lam(\omega,\mu))_\Gamma^+$

Finally, we suppose that the third possibility holds and we take it
as an identification. Then
$$Z\big((\bbbe^t[G],\lam)\big)=Z\big((\bbbe^t[G],\lam)^+\big)=Z(J\otimes_\bbbf
\bbbe)\cong Z(J)\otimes_\bbbf\bbbe.$$ So
$(\bbbe^t[G],\lam)$ is an associative $G$-torus with central
grading group $\Gamma$ such that $3G\sub\Gamma\subsetneq G$ and $G/\Gamma$ is a $2$ dimensional
vector space over $\bbbz_3$. Then by Proposition
\ref{yoshii-6.12}, $(\bbbe^t[G],\lam)$ has central degree $3$ and $\bbbe$ contains a
primitive third root of unity $\omega$ such that $(\bbbe^t[G],\lam)\cong (\bbbe^t[G],\lam(\omega,\mu))_\Gamma$, where $\mu$
is a $2$-cocycle on $\Gamma$. {It follows from Lemma \ref{4.9 yoshii} that
$\theta(x_i)=x_i$ for $i=1,2$ and that $\theta$ acts as an anti-automorphism on $(\bbbe^t[G],\lam(\omega,\mu))_\Gamma$. Therefore
$x_1x_2=\theta(x_2x_1)=\theta(\omega x_1x_2)=\theta(\omega)\omega x_1x_2$.} Thus $\theta(\omega)=\omega^{-1}\not=\omega$
and so $\omega\not\in\bbbf$.
Finally, as $[\bbbe:\bbbf]=2$, we have
$\bbbe=\bbbf(\omega)=\bbbf(\sqrt{-3}).$\qed

\begin{DEF}\label{albert}
{\em Let $G$ be a torsion free abelian group and $\Delta$, $\Gamma$ be two subgroups of $G$ satisfying
$$3G\subsetneq \Gamma\sub\Delta\sub G,\quad \dim_{\bbbz_3}(G/\Gamma)=3,\andd\dim(\Delta/\Gamma)=2.$$
Then we call the triple $(G,\Delta,\Gamma)$ an {\it Albert triple}.}
\end{DEF}

\begin{exa}\label{yoshii-6.8-2}
{\em Let $(G,\Delta,\Gamma)$ be an Albert triple.
We take $\sg_1,\sg_2,\sg_3\in G$ such that $\{\sg_i+\Gamma\mid 1\leq i\leq 3\}$ is a basis for $G/\Gamma$ and
$\{\sg_i+\Gamma\mid1\leq i\leq 2\}$ is a basis for $\Delta/\Gamma$. Then
$$G=\bigcup_{0\leq i,j,k\leq 2}(i\sg_1+j\sg_2+k\sg_3+\Gamma)\andd
\Delta=\bigcup_{0\leq i,j\leq 2}(i\sg_1+j\sg_2+\Gamma).$$ Let
$\aa:=(\bbbf^t[\Delta],\lam(\omega,\mu))_\Gamma=\bigoplus_{\sg\in\Delta}\aa^\sg$ be the
$\Delta$-tori associated to the pair
$(\Delta,\Gamma)$ (see Example \ref{albert-tori}), where $\mu$ is a $2$-cocycle on $\Gamma$
and $\omega$ is a third root of unity. Let $Z=Z(\aa)$,
and $\tr$ be
the generic trace of the central closure $\bar{\aa}$. We fix
nonzero elements $u_1\in \aa^{\sg_1}$, $u_2\in \aa^{\sg_2}$ and
$u_3\in \aa^{3\sg_3}$. We note that
$\aa$ is a free
 $Z$-module with free basis $\{u_1^{i}u_2^{j}\mid 0\leq i,j\leq 2\}$. Since $\tr$ is $Z$-linear, for any $z\in Z$ and a basis element $u_1^iu_2^j$,
 we have $\tr(u_1^iu_2^jz)=z\tr(u_1^iu_2^j)=0$ if $(i,j)\not=(0,0)$, by Proposition \ref{6.7-Yoshii}, and so
 $\tr(\aa)\sub Z$. Since $u_3$ is an invertible element of $Z$, we consider the first Tits construction
 $\bbba_t=(\aa,u_3)$, (see \cite[6.5]{Yo1}). We call $\bbba_t$ the {\it Jordan algebra associated to the Albert triple
 $(G,\Delta,\Gamma)$.}

{\bf Claim.} $\bbba_t$ is a Jordan $G$-torus of strong type.

To see this, we first give a $G$-grading to $\bbba_t$ as follows.
Recall that  $u_i\in \aa^{\sg_i}$ for $i=1,2$ and $u_3\in
\aa^{3\sg_3}$. We now fix $u_0=1\in\bbbf= \aa^0$ and nonzero elements
$u_\gamma\in \aa^\gamma$ for $\gamma\in\Gamma\setminus\{0,3\sg_3\}$. For
$\a=i\sg_1+j\sg_2+\gamma\in \Delta$, $0\leq i,j\leq 2$,
$\gamma\in\Gamma$, we set $u_\a:=u_1^iu_2^ju_\gamma$. Then
$\aa=\bigoplus_{\a\in\Delta}\bbbf u_\a$. Next for
$\a=i\sg_1+j\sg_2+k\sg_3+\gamma\in G$, $0\leq i,j,k\leq 2$,
$\gamma\in\Gamma$, we set
$$t_\a:=\left\{\begin{array}{ll}
(u_\a,0,0)&\hbox{if }k= 0\\
(0,u_{\a-\sg_3},0)&\hbox{if }k=1\\
(0,0,u_{\a+\sg_3})&\hbox{if }k=2.
\end{array}\right.
$$
We have $t_{\sg_3}=(0,1,0)$, $t_{2\sg_3}=(0,0,u_3)$ and
$t_{-\sg_3}=t^{-1}_{\sg_3}=(0,0,1)$. One easily checks that, as a
vector space we have $\bbba_t=\oplus_{\a\in G}\bbbf t_\a$. Moreover, considering the multiplication rule in $\bbba_t$, it is not hard,
even though tedious,
to see that
$\bbba_t$ is strongly $G$-graded as a Jordan algebra and so $\bbba_t$ is a $G$-torus of strong type. To be more precise on this, we give a rough argument
as follows. Let us
recall that as a vector space we have $\bbba_t=\aa\oplus\aa\oplus \aa$. Now for $a\in\aa$, we set
 $a^{(0)}:=(a,0,0)$, $a^{(1)}:=(0,a,0)$ and $a^{(2)}:=(0,0,a)$. Also for $\a=i\sg_1+j\sg_2+k\sg_3+\gamma\in G$ of the above form, we set
$(\a):=0$ if $k=0$, $(\a):=-1$ if $k=1$ and $(\a):=1$ if $k=2$. Then we have $t_\a=u_{\a+(\a)\sg_3}^{(k)}$. Now if $\a'=i'\sg_1+\j'\sg_2+k'\sg_3+\gamma'$ is another
element in $G$ of the above form, then it is easy to see that
$u^{(k)}_{\a+(\a)\sg_3}\times u^{(k')}_{\a'+(\a')\sg_3}=ru^{(k)}_{\a+(\a)\sg_3}\cdot u^{(k')}_{\a'+(\a')\sg_3}$ for some $s\in\bbbz/2$.
Therefore,
\begin{eqnarray*}
t_\a t_{\a'}=r\big(u_3^{(\a)(\a')(\a+\a')}u^{(k)}_{\a+(\a)\sg_3}\cdot u^{(k')}_{\a'+(\a')\sg_3}\big)^{(\ep(k+k'))}.
\end{eqnarray*}
But $u_3^{(\a)(\a')(\a+\a')}u^{(k)}_{\a+(\a)\sg_3}\cdot u^{(k')}_{\a'+(\a')\sg_3}$ is a homogenous element of degree
$$3(\a)(\a')(\a+\a')\sg_3+\a+\a'+(\a)\sg_3+(\a')\sg_3=\a+\a'+(\a+\a')\sg_3.$$
It follows that
$$t_\a t_{\a'}=ru^{\ep(k+k')}_{\a+\a'+(\a+\a')}=rt_{\a+\a'},$$
for some scalar $r$. This shows that $\bbba_t$ is $G$-graded. To see that it is of strong type, we need to show that
$r$ is nonzero or equivalently $a\times b\not=0$ if $a:=u^{(k)}_{\a+(\a)\sg_3}$ and $b:=u^{(k')}_{\a'+(\a')\sg_3}.$
Suppose to the contrary that $a\times b=0$. Then we must have $\tr(a\cdot b)=\tr(a)\tr(b)$. Now if both $a$ and $b$ are central
this gives $ab=3ab$, as $\tr(1)=3$, which is absurd. If $a$ is central but $b$ not, then we get $\tr(a\cdot b)=0$, which in turn implies
$ab=0$ which is again absurd. Finally, if both $a$ and $b$ are non-central, then again we get $\tr(a\cdot b)=0$ which together with $a\times b=0 $ implies $a\cdot b=0$ or
equivalently $ab=-ba$.
Then $ab=-ba=\omega^t ba$ for some integer $t$ which is absurd as $\omega$ is a third root of unity.

By \cite[Lemma 6.5]{Yo1}, the central closure $\bar{\bbba}_t$ of $\bbba_t$ is an Albert algebra over $\bar{Z}$,
and so $\bbba_t$ is a {Jordan $G$-torus of  Albert type. We refer to $\bbba_t$ as an {\it Albert $G$-torus} constructed from an Albert triple
$(G,\Delta,\Gamma)$}.\qedexa
}
\end{exa}

\begin{thm}\label{yoshii-thm-6.16}
{Let $J$ be a Jordan $G$-torus of Albert type over $\bbbf$ with central grading group $\Gamma$.
Then $G$ contains a subgroup $\Delta$ such that $(G,\Delta,\Gamma)$ is an Albert triple and
$J$ is graded isomorphic to the Albert $G$-torus $\bbba_t$, constructed from the Albert triple $(G,\Delta,\Gamma)$
(see Example \ref{yoshii-6.8-2}).
Conversely, given an Albert triple $(G,\Delta,\Gamma)$, the associated Jordan algebra $\bbba_t$ is an Albert $G$-torus.}
\end{thm}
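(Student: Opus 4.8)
The converse is Example~\ref{yoshii-6.8-2}, so I concentrate on the forward direction, which I would model on the free-rank case treated in \cite{Yo1}. Since $\bar J$ is an Albert algebra over $\bar Z$ it is $27$-dimensional of degree $3$, so $J$ has central degree $3$ in the sense of Definition~\ref{yoshii-6.4}. Proposition~\ref{6.7-Yoshii} then gives $3G\sub\Gamma\subsetneq G$, $\supp(J)=G$, and $\tr(J_\a)=\{0\}$ for every $\a\in G\setminus\Gamma$, while Lemma~\ref{ipm6}(iii) gives $\dim_{\bar Z}\bar J=|G/\Gamma|=27$; since $3G\sub\Gamma$, the group $G/\Gamma$ is a $3$-dimensional vector space over $\bbbz_3$.

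Next I would carve out a $9$-dimensional associative slice. Choose $\sg_1,\sg_2,\sg_3\in G$ whose classes form a $\bbbz_3$-basis of $G/\Gamma$, fix nonzero $u_i\in J_{\sg_i}$, and put $\Delta:=\la\Gamma,\sg_1,\sg_2\ra$ and $J_\Delta:=\bigoplus_{\a\in\Delta}J_\a$. Then $J_\Delta$ is a Jordan $\Delta$-torus with $\dim(\Delta/\Gamma)=2$; it is not of Albert type (its central closure has dimension at most $|\Delta/\Gamma|=9<27$), not commutative associative (its central closure would then be a nonic field extension of $\bar Z$, impossible inside a degree-$3$ Jordan algebra), and not of Clifford type (by Theorem~\ref{clif} its central grading group would then contain $2\Delta$, which together with $3\Delta\sub\Gamma$ reduces to the commutative case). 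Hence $J_\Delta$ is a special Jordan $\Delta$-torus of central degree $3$; its central grading group $\Gamma'$ satisfies $\Gamma\sub\Gamma'\sub\Delta$ and $|\Delta/\Gamma'|=9$ by Proposition~\ref{yoshii-6.13}, so $\Gamma'=\Gamma$. Proposition~\ref{yoshii-6.13} now identifies $J_\Delta$, up to $\Delta$-graded isomorphism, with $(\bbbf^t[\Delta],\lam(\omega,\mu))_\Gamma^+$ when a primitive cube root of unity $\omega$ lies in $\bbbf$, and with $H\big((\bbbe^t[\Delta],\lam(\omega,\mu))_\Gamma,\sg\big)$ when $\omega\notin\bbbf$, where $\bbbe=\bbbf(\sqrt{-3})$ and $\mu$ is a $2$-cocycle on $\Gamma$; write $\aa$ for the resulting associative torus of central degree $3$, so that $J_\Delta=H(\aa,*)$ (with $*=\mathrm{id}$ in the first case).

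Finally I would rebuild $J$ from $\aa$ by a first Tits construction. Since $3\sg_3\in 3G\sub\Gamma\sub\Delta$, the classes $0,\sg_3,2\sg_3$ represent $G/\Delta\cong\bbbz_3$, so $J=J_\Delta\oplus\big(\bigoplus_{\a\in\Delta+\sg_3}J_\a\big)\oplus\big(\bigoplus_{\a\in\Delta+2\sg_3}J_\a\big)$ as a $Z(\aa)$-module, each summand free of rank $9$, and $v:=u_3^3\in J_{3\sg_3}=Z(\aa)_{3\sg_3}$ is invertible in $Z(\aa)$. I would then exhibit a $G$-graded isomorphism from the first Tits construction $(\aa,v)$ onto $J$ carrying its three $\aa$-slots onto $J_\Delta$, onto $\bigoplus_{\a\in\Delta+\sg_3}J_\a$ (with the degree-$\sg_3$ generator $(0,1,0)$ sent to $u_3$), and onto $\bigoplus_{\a\in\Delta+2\sg_3}J_\a$; that this map preserves all Jordan operations is forced because $\deg(\bar J)=3$ makes every product in $\bar J$ a function of the generic trace and of the associative multiplication on the $9$-dimensional subalgebra, together with the centrality of $v$ --- the long but essentially formal verification of \cite{Yo1} (alternatively one invokes that an Albert algebra containing $\bar\aa^+$ for a degree-$3$ central simple associative $\bar\aa$ must be a first Tits construction, and transports the $G$-grading). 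Rescaling $u_3$ --- which multiplies $v$ by a cube --- and then adjusting $\mu$ by a coboundary puts $(\aa,v)$ into the normal form of Example~\ref{yoshii-6.8-2}; by the first step and the choice of $\Delta$ the triple $(G,\Delta,\Gamma)$ satisfies $3G\sub\Gamma\sub\Delta\sub G$, $\dim_{\bbbz_3}(G/\Gamma)=3$ and $\dim(\Delta/\Gamma)=2$, so it is an Albert triple and $J\cong_G\bbba_t$. In the case $\omega\notin\bbbf$ one runs the analogous first Tits construction over $\bbbe$ and descends along $\sg$, so that $\bar J$ appears as a second Tits construction over $\bar Z$.

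The main obstacle is this last step: proving that $J$ is, as a $G$-graded Jordan algebra, literally the first Tits construction $(\aa,v)$. This rests on the structure theory of cubic (Albert) Jordan algebras that contain a degree-$3$ associative subalgebra, or on the explicit $U$-operator and trace-form identities of \cite{Yo1}; along the way one must treat the extension case $\omega\notin\bbbf$ uniformly and exclude any degenerate drop in $\dim_{\bar Z}\bar J$.
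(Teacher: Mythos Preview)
Your overall architecture matches the paper's: establish $\dim_{\bbbz_3}(G/\Gamma)=3$ via Proposition~\ref{6.7-Yoshii} and Lemma~\ref{ipm6}, cut out a $\Delta$-graded subtorus $J_\Delta$, identify it using Proposition~\ref{yoshii-6.13}, and realize $J$ as a first Tits construction. Two points, however, need correction.

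\textbf{The case $\omega\notin\bbbf$.} You propose to ``run the analogous first Tits construction over $\bbbe$ and descend along $\sg$, so that $\bar J$ appears as a second Tits construction over $\bar Z$.'' But the theorem asserts $J\cong_G\bbba_t$, where $\bbba_t$ is the \emph{first} Tits construction of Example~\ref{yoshii-6.8-2}; a genuine second Tits construction would not establish the stated conclusion. The paper handles this by showing the case $\omega\notin\bbbf$ \emph{cannot occur}: the argument (following \cite[pp.~40--41]{Yo1}) exploits the explicit structure of $H((\bbbe^t[\Delta],\lam(\omega,\mu))_\Gamma,\sg)$ inside $J$ to derive a contradiction, so that there is no second Tits construction at all. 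Your sketch does not rule this out and therefore leaves a real gap.

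\textbf{Two smaller issues.} First, your passage ``hence $J_\Delta$ is a special Jordan $\Delta$-torus of central degree~$3$; its central grading group $\Gamma'$ satisfies $|\Delta/\Gamma'|=9$ by Proposition~\ref{yoshii-6.13}'' is circular: Proposition~\ref{yoshii-6.13} \emph{assumes} central degree~$3$. The paper instead proves $Z(J_\Delta)=Z(J)$ directly, using that $\Delta/\Gamma'$ is a nontrivial quotient of $(\bbbz_3)^2$ which by Lemma~\ref{ipm6}(iv) cannot be cyclic, forcing $\Gamma'=\Gamma$; the $9$-dimensional central closure then embeds in $\bar J$ and is special by the classification of finite-dimensional central simple Jordan algebras, whence degree~$3$ via \cite[2.11]{Yo1}. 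Second, the Tits-construction isomorphism is obtained in the paper by invoking \cite[6.14]{Yo1}, which produces \emph{two} cases: either $(\aa,u_3)\cong J'$ with $(0,1,0)\mapsto x$, or $(\aa,u_3^{-1})\cong J'$ with $(0,0,1)\mapsto x$. The second case requires passing through the anti-isomorphism $\aa\to\aa^{\mathrm{op}}$ given by $u_1^iu_2^ju_\gamma\mapsto u_2^iu_1^ju_\gamma$. Your ``long but essentially formal verification'' hides this dichotomy, which is not merely cosmetic.
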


\proof  Let $J=\bigoplus_{\sg\in
G}J^\sg$ be a Jordan $G$-torus as in the statement. Then the central
closure $\bar J$ is an Albert algebra over $\bar Z$, $Z:=Z(J)$. We recall that
an Albert algebra is a $27$-dimensional central simple exceptional
Jordan algebra of degree $3$. By Proposition \ref{6.7-Yoshii}, $3G\subsetneq
\Gamma\sub G$ and $\supp(J)=G$. Moreover, by Lemma \ref{ipm6},
$27=\dim_{\bar Z}{\bar J}=|G/\Gamma|$. Since $G/\Gamma$ is a vector
space over the field of $3$ elements, we have
$\dim_{\bbbz_3}G/\Gamma=3$. We fix $\sg_1,\sg_2,\sg_3\in G$ such that
$\{\sg_i+\Gamma\mid i=1,2,3\}$ is a basis for $G/\Gamma$. Then
$G=\bigcup_{0\leq i,j,k\leq 2}(i\sg_1+j\sg_2+k\sg_g+\Gamma)$. Set
$$\Delta:=\bigcup_{1\leq i,j\leq 2}(i\sg_1+j\sg_2+\Gamma)\andd
U:=\bigoplus_{\sg\in\Delta}J_\sg.$$ Since $3G\sub\Gamma$, $\Delta$ is
a subgroup of $G$ and so $U$ is a subalgebra of $J$. We now
show that $Z(U)=Z(J)$. Since $\Gamma\sub\Delta$, we have $Z(J)\sub Z(U)$.
Thus we must show $Z(U)\sub Z(J)$. Let $\Delta_1$ be the central
grading group of $U$. Then
$$3G\sub\Gamma\sub\Delta_1\sub\Delta\sub G.$$
Now using the same argument as in  \cite[Last
paragraph of page 163]{Yo1}, we see that $\Delta_1\subsetneq\Delta$.
By Lemma \ref{ipm6}, the central closure $\bar
U:=\overline{Z(U)}\otimes_{Z(U)}U$ is $(\Delta/\Delta_1)$-graded and
$\Delta/\Delta_1$ {cannot} be a non-trivial cyclic group. Thus
$2\leq\dim\Delta/\Delta_1\leq \dim\Delta/\Gamma=2$. This gives
$\dim\Delta/\Delta_1=2$ and $\Delta_1=\Gamma$. That is $Z(U)=Z=Z(J)$.

Since $Z(U)=Z(J)$, we have $\bar U=\bar Z\otimes_Z
U\hookrightarrow\bar J$. By \cite[2.6(ii)]{Yo1}, $\bar U$ is
central. Thus $\bar U$ is a central subalgebra of the division
algebra $\bar J$ and is $9$-dimensional as $|\Delta/\Gamma|=9$. So by
the classification of finite dimensional central simple Jordan
algebras $\bar U$ is special (see \cite[Corollary 2, pages 204 and
207]{J2}). Then by \cite[2.11]{Yo1}, $\bar U$
has degree $3$. Thus $U$ is a special Jordan $G$-torus of central
degree $3$. So we may use the characterization given in Proposition \ref{yoshii-6.13} for $U$, in terms
of a primitive third root of unity $\omega$ and a $2$-cocycle $\mu$ on $\Gamma$, namely, $U\cong_G (\bbbf^t[\Delta],\lam(\omega,\mu))_\Gamma^+$ if $\omega\in\bbbf$ and
$U\cong_G H((\bbbe^t[\Delta],\lam(\omega,\mu))_\Gamma,\sg)$ if $\omega\not\in\bbbf$, where $\bbbe=\bbbf(\omega)$ and
$\sg$ is the non-trivial Galois automorphism of $E$.

We assume first that $\omega\in\bbbf$. Then
$U=(\bbbf^t[\Delta],\lam(\omega,\mu))_\Gamma^+$. We fix nonzero elements
$u_1:=u_{\sg_1}\in J^{\sg_1}$, $u_2:=u_{\sg_2}\in J^{\sg_2}$ and
$x\in J^{\sg_3}$. Set $u_3:=u_{3\sg_3}:=x^3\in J^{3\sg_3}$. Let
$\hbox{tr}$ be the generic trace of $\bar{J}$. We have
{$$U=\bigoplus_{\sg\in\Delta}J^\sg=\bigoplus_{0\leq i,j\leq
2,\;\gamma\in\Gamma}J^{i\sg_1+j\sg_2+\gamma}
=\bigoplus_{0\leq i,j\leq 2,\;\gamma\in\Gamma}J^\gamma
J^{i\sg_1+j\sg_2}=\bigoplus_{0\leq i,j\leq 2}ZJ^{i\sg_1+j\sg_2}.$$}
Thus $U$ is a free $Z$-module with basis $\{u_1^iu_2^j\mid 0\leq
i,j\leq 2\}$. Now for $z\in Z$ and $0\leq i,j\leq 2$,
$\hbox{tr}(zu_{1}^{i}u_2^j)=z\hbox{tr}(u_1^iu_2^j)=0$ if $(i,j)\not=(0,0)$, by Proposition \ref{6.7-Yoshii}, and is
equal to $z\hbox{tr}(1)$ if $i=i=0$. Thus $\hbox{Tr}(\bbbf^t[\Delta],\lam(\omega,\mu))_\Gamma\sub Z$.

Since $x^3=u_3$ is an invertible element of $Z$, we may consider the
first Tits construction $\bbba_t:=(\aa,u_3)$ over $Z$, where
$\aa:=(\bbbf^t[\Delta],\lam(\omega,\mu))_\Gamma$ (see \cite[6.5]{Yo1}).
As we have seen in Example \ref{yoshii-6.8-2}, $\bbba_t$ is a Jordan $G$-torus of strong type.

Next, let
$$U^{\perp}:=\{y\in J\mid \hbox{Tr}(Uy)=0\}.$$
We show that $J^{\sg_3}$,$J^{2\sg_3}\sub U^\perp$.
Now for $0\leq i,j\leq 2$ and
$k=1,2$, we have $(u_1^i u_2^j)x^k\in
G\setminus\Gamma$, so
$\hbox{Tr}((u_1^i u_2^j)x^k)=0$, again by Proposition \ref{6.7-Yoshii}. Since $\hbox{tr}$ is $Z$-linear, we are done.

Now setting ${\mathcal J}:=J$, $\mathcal U:=\aa^+$ and $z:=u_3$, we
see that the conditions of \cite[6.14]{Yo1} hold for the mentioned
elements. Therefore $J$ contains a subalgebra $J'$ such that one of
the {following} holds:

(I) there exists a $Z$-isomorphism $\varphi:(\aa,u_3)\longrightarrow
J'$ which acts as identity on $\aa$ and $\varphi((0,1,0))=x$,

(II) there exists a $Z$-isomorphism
$\varphi:(\aa,u_3^{-1})\longrightarrow J'$ which acts as identity on
$\aa$ and $\varphi((0,0,1))=x$.

We assume first that (I) holds and take $\sg\in  G$. Then
$\sg=i\sg_1+j\sg_2+k\sg_3+\gamma$, where $0\leq i,j,k\leq 2$ and
$\gamma\in\Gamma$. Since $\bbba_t$ is of strong type,
$u_0:=t_{\sg_1}^i\cdot(t_{\sg_2}^j\cdot(t_{\sg_3}^k\cdot t_\gamma))$
is a nonzero element of $\bbba_t$ and
$0\not=\varphi(u_0)=u_1^iu_2^jx^ku_\gamma\in J^\a$. Thus $\varphi$ is
an isomorphism over $Z$,  in particular $J\cong_G\bbba_t$.

Next, we assume (II) holds. We note that the $\bbbf$-linear map
$f:\aa=\bigoplus_{\a\in\Delta}\bbbf u_\a\longrightarrow\aa^{{op}}$
induced by $u_1^iu_2^ju_\gamma\longmapsto u_2^iu_1^ju_\gamma$ is an
algebra isomorphism over $\bbbf$. We note
that $f(u_3)=u_3$ and $\hbox{Tr}\circ f=f\circ\hbox{Tr}$. Now the
same reasoning as in the first paragraph of \cite[page 165]{Yo1}
shows that $J\cong_G \bbba_t$. This takes care of the case $\omega\in\bbbf$.

Finally, using an argument analogous to the one given in pages 40-41 of \cite{Yo1}, we see that the case $\omega\not\in\bbbf$ {cannot} happen.
{Thus there is no second Tits construction in our case.}
\qed

\end{document}